\renewcommand*{\backref}[1]{}
\renewcommand*{\backrefalt}[4]{%
    \ifcase #1 (Not cited.)%
    \or        (Cited on page~#2.)%
    \else      (Cited on pages~#2.)%
    \fi}
\newcommand{\Ker}{\text{Ker}}
\newcommand{\K}{K\"ahler}
\newcommand{\Ad}{\text{Ad}}
\numberwithin{equation}{section}
\def\eqref#1{(\ref{#1})}
\newcommand{\Z}{{\mathbb Z}}
\newcommand{\C}{{\mathbb C}}
\newcommand{\R}{{\mathbb R}}
\def\1{\sqrt{-1}\:}
\newcommand{\restrict}[1]{{\big|_{{\phantom{|}\!\!}_{#1}}}}
\newcommand{\cntrct}                % contraction with a vector field
{\hspace{2pt}\raisebox{1pt}{\text{$\lrcorner$}}\hspace{2pt}}
\renewcommand{\dim}{\operatorname{dim}}
\renewcommand{\Re}{\operatorname{Re}}
\renewcommand{\Im}{\operatorname{Im}}
\newcommand{\iso}{\ensuremath{\simeq}}
\newcommand{\ie}{{\em i.e. }}
\newcommand{\eg}{{\em e.g. }}
\renewcommand{\to}{\longrightarrow}
\newcounter{Mycounter}[section]
\newcounter{lemma}[section]
\newcounter{claim}[section]
\newcounter{sublemma}[section]
\newcounter{corollary}[section]
\newcounter{theorem}[section]
\newcounter{conjecture}[section]
\newcounter{proposition}[section]
\newcounter{definition}[section]
\newcounter{example}[section]
\newcounter{remark}[section]
\newcounter{problem}[section]
\newcounter{question}[section]
\def\blacksquare{\hbox{\vrule width 5pt height 5pt depth 0pt}}
\def\endproof{\hfill\blacksquare}
\tikzset{join/.code=\tikzset{after node path={%
			\ifx\tikzchainprevious\pgfutil@empty\else(\tikzchainprevious)%
			edge[every join]#1(\tikzchaincurrent)\fi}}}
\tikzset{>=stealth',every on chain/.append style={join},
	every join/.style={->}}
\begin{document}

\newpage

\title{Locally conformally symplectic reduction}
\author{Miron Stanciu}
\address{Institute of Mathematics ``Simion Stoilow'' of the Romanian Academy\newline 
	21, Calea Grivitei Street, 010702, Bucharest, Romania\newline 
\indent	{\em and}\newline
\indent	University of Bucharest, Faculty of Mathematics and Computer Science, 14 Academiei Str., Bucharest, Romania}
	\thanks{Partially supported by a grant of Ministry of Research and Innovation, CNCS - UEFISCDI,
project number PN-III-P4-ID-PCE-2016-0065, within PNCDI
III.\\[.1in]
{\bf Keywords:} Locally conformally symplectic, locally conformally 
K\"ahler, contact manifold, Sasakian manifold, momentum map, reduction, foliation, cobordism.\\
{\bf 2010 Mathematics Subject Classification:} 53D20, 53D05, 53D10.
}
\email{miron.stanciu@imar.ro; mirostnc@gmail.com}

\date{\today}

\begin{abstract}
	We present a reduction procedure for locally conformally symplectic (LCS) manifolds with an action of a Lie group preserving the conformal structure, with respect to any regular value of the momentum mapping. Under certain conditions, this reduction is compatible with the existence of a locally conformally \K \ structure. As a special consequence, we obtain a compatible contact reduction with respect to any regular value of the contact momentum mapping. 
	
\end{abstract}

\maketitle

\hypersetup{linkcolor=blue}
\tableofcontents

\section{Introduction}

The Marsden-Weinstein reduction for symplectic manifolds is a classic and well known procedure (see \cite[pp. 298-299]{am}, \cite[ch. 7]{br}) by which, given a Poisson action of a Lie group  on a symplectic manifold, one can produce other symplectic manifolds as quotients of level sets of the momentum mapping. Since its introduction, the general idea has been applied to many geometric structures (contact, K\"ahler etc.). 

In this paper we adapt the symplectic reduction to locally conformally symplectic (briefly LCS) manifolds in a natural way. LCS manifolds are quotients of symplectic manifolds by a discrete group of homotheties. Products of contact manifolds with $S^1$ are LCS (\cite{va}), but there are many other examples. Equivalently, they are even-dimensional manifolds endowed with a non-degenerate two-form $\omega$ and a closed one-form $\theta$ (called Lee form) such that $d\omega=\theta\wedge\omega$. 

There is a growing interest in LCS manifolds, concerning both their geometry and topology (\eg \cite{baz}, \cite{mur} and the references therein). A strong motivation for studying LCS manifolds is a recent result by Eliashberg and Murphy \cite{el} proving that all compact almost complex manifolds with non-trivial integer one-cohomology do have LCS structures, and hence these manifolds are much more numerous than symplectic ones.

The symplectic reduction consists in the following steps: 
\begin{itemize}
\item Using the Poisson character of the action, one constructs a momentum mapping from the symplectic manifold to the dual of the Lie algebra of the group. This will be equivariant with respect to the group action and to the coadjoint action.
\item Factoring the level set of a regular value of the momentum mapping through the action of the stabilizer of this value. The restriction of symplectic form to the level set will project to a symplectic form on the quotient.
\end{itemize}

S. Haller and T. Rybicki (\cite{hr}) adapted this symplectic reduction to LCS manifolds. The path chosen was to retain the idea of factoring by the group action itself; however, in the general case, this means the level sets of the momentum mapping no longer satisfy the hypotheses they impose. We do the opposite: we  factor the level sets of the momentum mapping, as in the symplectic case, but not through the group action, but along a foliation derived by ``twisting'' the group action. This amounts  to the known symplectic reduction if the Lee form $\theta$ is zero.

We do not ask that the group action preserve the LCS form, only the LCS structure; however, if only the latter is true, certain additional conditions have to be imposed. Specifically, we prove:

\hfill

{\bf Theorem A} (see \ref{th:LCSred}) Let $(M, \omega, \theta)$ be a connected LCS manifold and $G$ a connected Lie group acting twisted Hamiltonian on it.

Let $\mu$ be the momentum mapping and $\xi \in \mathfrak{g}^*$ a regular value. Denote by $\mathcal{F} = T\mu^{-1}(\xi) \cap (T\mu^{-1}(\xi))^\omega$ - this is a foliation along $\mu^{-1}(\xi)$. Assume that one of the following conditions is met:
\begin{itemize}
	\item The action of $G$ preserves the LCS form $\omega$.
	\item $G$ is compact, the bilinear forms
	\[
	\mathfrak{g} \times \mathfrak{g} \ni (a,b) \mapsto \xi(a) \wedge \theta_x(X_b)
	\] are zero for all $x \in \mu^{-1}(\xi)$ (here $X_b$ is the fundamental vector field corresponding to $b \in \mathfrak{g}$) and $\theta$ is exact on $\mathcal{F}$.
\end{itemize}

If $N_\xi := \faktor{\mu^{-1}(\xi)}{\mathcal{F}}$ is a smooth manifold and $\pi : \mu^{-1}(\xi) \to N_\xi$ is a submersion, then $N_\xi$ has an LCS structure such that the LCS form $\omega_\xi$ satisfies
\[
\pi^* \omega_\xi = e^f \omega_{|\mu^{-1}(\xi)}
\]
for some $f \in C^\infty(\mu^{-1}(\xi))$. 

Moreover, one can take $f = 0$ if the action preserves the LCS form. \endproof

\hfill

We then prove that this reduction along a foliation can be expressed as a quotient of a group action of the universal algebra of the stabilizer of the regular value $\xi$.

The reduction described above can produce a great number of LCS manifolds, by varying either the regular value or the LCS form in its conformal class. We show that in both cases and under certain conditions, the quotients produced are cobordant.

We also prove that, with an additional hypothesis, our construction is compatible with the existence of a complex structure \ie if the manifold $M$ is locally conformally \K \ (LCK for short), the resulting reduced manifold is also LCK. The most significant of these conditions is met if the manifold belongs to a large subclass of LCK manifolds called Vaisman.

\hfill

{\bf Theorem A} can be used in the contact context. Contact reduction at regular value zero has been well established, independently, by various authors (see \cite{al}, \cite{ge}, \cite{lo}), for some time. Lerman and Willett \cite{lw} studied the topological structure of contact quotients. Albert \cite{al} in 1989 and Willett \cite{wi} in 2002 developed different reduction procedures for non-zero regular values; the first depends on the choice of contact form for the contact structure, the latter imposes more conditions than in the symplectic analogue. They were later unified by Zambon and Zhu \cite{zz} using groupoid actions. Exploiting the relationship between contact and LCS manifolds, we derive (and get a new proof of) a contact reduction method that works for any regular value of the momentum mapping, which turns out to have the same result as the one defined by Albert. Since we present this reduction as being naturally linked to LCS reduction, this seems to be the most natural of the existing methods for non-zero contact reduction. As a byproduct, this provides a wide  class of examples for the reduction method. Specifically, the contact reduction method is described in:

\hfill

{\bf Theorem B} (see \cite{al} \& \ref{thm:redcont}) Let $(C, \alpha)$ be a connected contact manifold and $G$ a connected Lie group acting on $C$ and preserving the contact form. Denote by $R$ the Reeb field of $C$. 

Let $\mu_C$ be the momentum mapping and $\xi \in \mathfrak{g}^*$ a regular value. Let $\mathcal{F}_C$ be the foliation
\[
(\mathcal{F}_C)_x = \{ v \in T_x \mu^{-1}(\xi) \ | \ v = (X_a)_x - \xi(a) R_x \text{ for some } a \in \mathfrak{g} \},
\]
where $X_a$ is the fundamental vector field corresponding to $a \in \mathfrak{g}$.

If $C_\xi := \faktor{\mu_C^{-1}(\xi)}{\mathcal{F}_C}$ is a smooth manifold and $\pi : \mu_C^{-1}(\xi) \to C_\xi$ is a submersion, then $C_\xi$ has a natural contact structure such that the contact form $\alpha_\xi$ satisfies
\[
\pi^* \alpha_\xi = \alpha.
\]
Moreover, $(S^1 \times C_\xi, d_\theta \alpha_\xi, \theta)$ is the LCS reduction of $S^1 \times C$ with respect to the regular value $-\xi$. \endproof

%Let $C$ be a connected contact manifold with a Lie group action of $G$ that preserves the contact form. Let $\xi$ be a regular value for the momentum mapping $\mu_C$. Let $\mathcal{F}_C$ be a specific integrable foliation described in Section \ref{sectionContRed}.

%If $C_\xi := \faktor{\mu_C^{-1}(\xi)}{\mathcal{F}_C}$ is a smooth manifold and $\pi : \mu_C^{-1}(\xi) \to C_\xi$ is a submersion, then $C_\xi$ has a natural contact structure such that the pullback of the contact form on it is equal to the contact form on $C$.

%Moreover, $S^1 \times C_\xi$ is the LCS reduction of $S^1 \times C$ with respect to the regular value $-\xi$.  \endproof

\hfill

We also prove that, if the original contact manifold is Sasaki, the reduced space obtained \textit{via} this method is also Sasaki. 

\hfill

The paper is organized as follows. In Section \ref{sectionPrelim} we present basic facts about LCS and contact manifolds, needed further. 
Section \ref{sectionLcsRed} is devoted to proving Result I and determining the cobordism class of the quotients, while Section \ref{sectionLCK} describes the conditions under which our reduction is compatible with a complex structure. In Section \ref{sectionContRed} we discuss the contact and Sasaki reductions and their compatibility with the LCS and LCK reductions. Section \ref{sectionExamples} is reserved for concrete classes of examples on which we apply the techniques developed.

\section{Preliminaries}
\label{sectionPrelim}

\begin{definition}
	A manifold $M$ with a non-degenerate two-form $\omega$ is called \textit{locally conformally symplectic} (for short, \textit{LCS}) if there exists a closed one-form $\theta$ such that 
	$$d\omega = \theta \wedge \omega.$$
	 In that case, $\omega$ is called the LCS form and $\theta$ is called {\em the Lee form}\footnote{The name was given after H.C.  Lee who discussed the problem of finding integral factors for a nondegenerate two-form to be closed, \cite{lee}.} of $\omega$. 
\end{definition}

\bigskip

This notion was introduced by P. Libermann in \cite{lib} and later studied by J. Lefebvre \cite{lef} and especially I. Vaisman \cite{va}. One can see that the name ``LCS'' is justified, because the definition above is equivalent to the existence of a cover $(U_\alpha)_\alpha$ and a family of smooth functions $f_\alpha$ on each $U_\alpha$ such that $d (e^{-f_\alpha} \omega  )= 0$ (see \cite{va}).

\bigskip

It was recently proven by Y. Eliashberg and E. Murphy that LCS manifolds are very widespread:

\begin{theorem}\textnormal {(\cite{el})}
	\label{thm:eliashberg}
	Let $(M, J)$ be a closed $2n$-dimensional almost complex manifold and $[\eta]$ a non-zero cohomology class in $H_1(M, \Z)$. Then there exists a locally conformally symplectic structure $(\omega,\theta)$ in a formal homotopy class determined by $J$, with $\theta = c \cdot \eta$ for some real $c \neq 0$. 
\end{theorem}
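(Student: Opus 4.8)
The plan is to recognize this as the Eliashberg--Murphy existence theorem and to prove it by an $h$-principle, organized around a reduction to an equivariant symplectic problem on a cover. First I would translate the LCS condition into a statement about a cyclic cover. Using the conformal reformulation recorded after Definition~2.1, an LCS form $\omega$ with Lee form $\theta = c\,\eta$ (with $c\neq 0$ still to be chosen) is the same datum as a symplectic form on the infinite cyclic cover $p\colon \tilde M \to M$ determined by the homomorphism $\pi_1(M)\to\Z$ dual to $[\eta]$, subject to a fixed homothety under the deck action. Concretely, $p^*\theta = dh$ admits a global primitive $h$, and $\Omega := e^{-h}\,p^*\omega$ is closed and non-degenerate; since $p\circ\gamma = p$ and $\gamma^* h = h + c$ for the deck generator $\gamma$, one gets $\gamma^*\Omega = e^{-c}\,\Omega$ with $e^{-c}\neq 1$. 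Conversely, any $\gamma$-homothetic symplectic form on $\tilde M$ descends to an LCS form on $M$ with Lee form $c\,\eta$, so the problem becomes: build a $\gamma$-homothetic symplectic form on $\tilde M$.

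Next I would extract the formal (homotopy-theoretic) input from $J$. The almost complex structure furnishes a non-degenerate $2$-form $\omega_0 = g(J\cdot,\cdot)$ on $M$ for any Hermitian metric $g$, hence an almost symplectic structure, and by pullback a $\gamma$-invariant almost symplectic structure on $\tilde M$. This $\omega_0$ is exactly the formal solution to be deformed, and the theorem's clause ``in a formal homotopy class determined by $J$'' is the assertion that the produced $\omega$ remains homotopic to $\omega_0$ through non-degenerate forms.

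Finally I would invoke the $h$-principle. Since $\tilde M$ is open, Gromov's $h$-principle for symplectic forms on open manifolds already yields a genuine symplectic form homotopic to $\omega_0$; the whole difficulty is to carry this flexibility out \emph{equivariantly}, so that the outcome is precisely homothetic under $\gamma$. This equivariant $h$-principle is the technical heart of \cite{el}: one performs holonomic approximation / convex integration over a fundamental domain and propagates it by the deck action, exploiting the freedom in the homothety factor $e^{-c}\neq 1$ to reconcile the two boundary copies of the fundamental domain. I expect this gluing-under-homothety step to be the main obstacle; once it is achieved, the descended form $\omega$ is the desired LCS structure with $\theta = c\,\eta$. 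Equivalently, one may run the argument directly on $M$ as an $h$-principle for the twisted differential $d_\theta = d - \theta\wedge(\,\cdot\,)$, deforming a formal $d_\theta$-closed non-degenerate form into an honest one, with the same flexibility statement carrying the proof.
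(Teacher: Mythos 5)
The paper does not actually prove this statement: Theorem 2.2 is quoted as background from Eliashberg--Murphy \cite{el}, so there is no internal proof to compare your attempt against. Judged on its own merits, your opening reduction is correct and consistent with the paper's discussion of the minimal covering in Section 2: an LCS structure on $M$ with Lee form $c\,\eta$ is the same as a symplectic form $\Omega$ on the infinite cyclic cover $\tilde M$ determined by $[\eta]$ satisfying $\gamma^*\Omega = e^{-c}\,\Omega$ for the deck generator $\gamma$, and the almost complex structure $J$ supplies the formal non-degenerate data $\omega_0 = g(J\cdot,\cdot)$.

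The gap is in your endgame. There is no ``equivariant Gromov $h$-principle'' for symplectic forms under an infinite discrete group action that you can invoke: Gromov's flexibility theorem for open manifolds rests on an exhaustion/holonomic-approximation argument that cannot be performed equivariantly for a $\Z$-action whose quotient is closed. Indeed, any such general tool applied to \emph{invariant} (rather than homothetic) forms would descend to produce symplectic forms on closed manifolds from purely formal data, which is false; so whatever mechanism proves the theorem must use the homothety $e^{-c}\neq 1$ in an essential, structural way, not as a boundary-matching convenience in a fundamental-domain argument. This is also not how Eliashberg and Murphy argue. Their proof cuts $M$ along a closed hypersurface $N$ Poincar\'e dual to $\eta$, obtaining a cobordism $W$ from $N$ to itself; their main theorem on making cobordisms symplectic --- whose engine is the flexibility (classification up to isotopy) of overtwisted contact structures, not convex integration for the symplectic form alone --- endows $W$ with a symplectic structure whose concave and convex boundary contact structures on the two copies of $N$ are overtwisted and in the same formal class, hence contactomorphic; regluing $W$ along this contactomorphism, with the form near one end a constant multiple of the form near the other, produces the LCS structure, and that scaling constant is exactly the nonzero $c$. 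So your reformulation on the cover is fine, but the tool you propose to solve it does not exist, and the reference you defer to proves the theorem by a genuinely different, contact-topological cut-and-reglue route.
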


\begin{definition}
	Let $(M, \omega, \theta)$ be an LCS manifold. Define the \textit{twisted de Rham operator}
	\[
	d_\theta = d - \theta \wedge \ : \Omega^\bullet(M) \to \Omega^{\bullet + 1}(M).
	\]
In particular, $d_\theta\omega=0$.	Since $d\theta = 0$, we have $d_\theta^2 = 0$, so this operator defines the so-called {\em twisted} or {\em Morse-Novikov} cohomology
	\[
	H^\bullet_\theta (M)= \frac{\Ker d_\theta}{\Im d_\theta}.
	\]
\end{definition}

The following fact underlines a first key difference between \textit{de Rham} and twisted cohomology:

\begin{proposition}
\label{prop:H0MN}
Assume $M$ is connected and let $\theta$ be a closed $1$-form.
\begin{enumerate}[1)]
	\item If $[ \theta ] = 0$, then $H^\bullet_\theta(M) \simeq H^\bullet(M)$.
	\item If $[ \theta ] \neq 0$, then $H^0_\theta(M) = 0$.
\end{enumerate}

\begin{proof}
	\begin{enumerate}[1)]
		\item If $\theta = df$, we define the isomorphism
		\[
		\varphi: H^\bullet_\theta(M) \to  H^\bullet(M), \varphi([\alpha]) = [e^{-f} \alpha].
		\]
		\item Assume $\theta$ is not exact and let $f \in C^\infty(M), \ d_\theta f = 0$ \ie $df = f \theta$. Let $A = \{ x \in M \ | \ f(x) = 0 \}$; we first show $A$ is an open set.
		
		Pick $x \in A$ and $V \ni x$ open such that $\theta = dg$ on $V$. Then, from $1)$, $d(e^{-g} f) = 0$, so it is constant on $V$; since $f(x) = 0$, we have $e^{-g}f = 0$ on $V$. But since $e^{-g} \neq 0$, this means $V \subset A$.
		
		If $A = \emptyset$, then we can assume $f$ is positive and obtain $\theta = d ln(f)$, a contradiction. Since $M$ is connected, this leaves $A = M$ \ie $f = 0$.
	\end{enumerate}
\end{proof}

\end{proposition}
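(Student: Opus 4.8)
For part (1), the plan is to produce an explicit isomorphism of cochain complexes rather than merely of cohomology groups. Since $\theta$ is closed with $[\theta]=0$, I would write $\theta = df$ for some global $f\in C^\infty(M)$ and consider multiplication by $e^{-f}$ as a map $\Omega^\bullet(M)\to\Omega^\bullet(M)$. The key computation is the Leibniz identity
\[
d(e^{-f}\alpha) = -e^{-f}\,df\wedge\alpha + e^{-f}\,d\alpha = e^{-f}(d\alpha - \theta\wedge\alpha) = e^{-f}\,d_\theta\alpha,
\]
which shows that $\alpha\mapsto e^{-f}\alpha$ intertwines $d_\theta$ with $d$. As this map is invertible, with inverse given by multiplication by $e^{f}$, it is a cochain isomorphism and hence descends to the claimed isomorphism $\varphi\colon H^\bullet_\theta(M)\xrightarrow{\sim} H^\bullet(M)$, $\varphi([\alpha])=[e^{-f}\alpha]$.

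For part (2), I would first reduce the statement to a claim about functions. In degree $0$ there is nothing in degree $-1$, so $H^0_\theta(M) = \ker\big(d_\theta\colon \Omega^0\to\Omega^1\big) = \{\,f\in C^\infty(M) : df = f\theta\,\}$, and the goal becomes showing that $f\equiv 0$ is the only solution. The strategy is a connectedness (clopen) argument applied to the zero set $A := f^{-1}(0)$. This set is automatically closed; the crux is to show it is also open.

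To prove $A$ open, I would exploit that $\theta$, being closed, is locally exact: around any $x\in A$ pick a connected open $V$ with $\theta = dg$ on $V$. The computation from part (1) gives $d(e^{-g}f)=e^{-g}\,d_\theta f = 0$, so $e^{-g}f$ is constant on $V$; since it vanishes at $x$ and $e^{-g}$ is nowhere zero, $f\equiv 0$ on $V$, whence $V\subseteq A$. Connectedness of $M$ then forces $A=\emptyset$ or $A=M$. To exclude the first case, note that if $f$ is nowhere vanishing on the connected $M$ it has constant sign, so after replacing $f$ by $-f$ if necessary we may assume $f>0$; then $\theta = df/f = d(\ln f)$ is exact, contradicting $[\theta]\neq 0$. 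Hence $A=M$, i.e. $f\equiv 0$, giving $H^0_\theta(M)=0$.

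The main obstacle, and the only genuinely nontrivial point, is establishing that the zero set $A$ is open. Everything hinges on passing from the global equation $df=f\theta$, which need not integrate globally because $\theta$ is only closed, to a local primitive $g$ of $\theta$; the local constancy of $e^{-g}f$ is exactly what upgrades ``$f$ vanishes at a point'' to ``$f$ vanishes on a neighborhood.'' The rest of the argument is formal.
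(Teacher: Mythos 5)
Your proposal is correct and follows essentially the same route as the paper: in part (1) the same conformal rescaling $\alpha\mapsto e^{-f}\alpha$ (you merely make explicit the cochain-level Leibniz computation the paper leaves implicit), and in part (2) the same clopen argument on the zero set of $f$, using a local primitive of $\theta$ to show openness and the $\theta=d(\ln f)$ contradiction to rule out $A=\emptyset$. No gaps.
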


\begin{definition}
	In a symplectic vector space $V$, given a subspace $W \le V$, we denote by
	\[
	W^\omega = \{ v \in V \ | \ \omega (v, W) = 0 \} \le V
	\]
	the $\omega$-dual of $W$. Since $\omega$ is non-degenerate, we have 
	\[
	\dim W + \dim W^\omega = \dim V.
	\]
	Note that the above sum is not necessarily direct.
\end{definition}

\begin{remark}
	The LCS condition is conformally invariant in the following sense. If $\omega$ is an LCS form on $M$ with Lee form $\theta$, then $e^f \omega$ is also an LCS form with Lee form $\theta + df$, for any $f \in \mathcal{C}^\infty(M)$. Therefore, it is sometimes more interesting to work with an LCS structure 
	\[
	[(\omega, \theta)] = \{ (e^f \omega, \theta + df) \ | \ f \in \mathcal{C}^\infty(M) \}
	\]
	on $M$ rather than with particular forms in that structure. Note that a symplectic form exists in a given LCS structure if and only if $[\theta] = 0 \in H^1(M)$; in this case, the structure is called {\em globally conformally symplectic (GCS)}. In general, we shall not consider the GCS case.
	\end{remark}

\bigskip

Another way to look at LCS manifolds is \textit{via} a convenient covering: let $\widetilde{M}$ be the universal covering of the LCS manifold $(M, \omega, \theta)$. Choose $\lambda \in \mathcal{C}^\infty(\widetilde{M})$ such that $d\lambda = \pi^* \theta$. Then we have 
\[
d(e^{-\lambda} \pi^* \omega) = 0,
\]
so $\omega_0 = e^{-\lambda} \pi^* \omega$ is a symplectic form on the universal covering. Moreover, for a $\gamma \in \Gamma = \pi_1(M)$, since $\pi^* \theta$ is $\Gamma$-invariant, we have $\lambda \circ \gamma = \lambda + r_\gamma$ for a $r_\gamma \in \R$, so we can define a representation of $\Gamma$
\[
r: \Gamma \to \R, \ r(\gamma) = r_\gamma.
\]
As $\Gamma$ can also contain $\omega_0$-symplectomorphisms, $r$ is not injective. Factoring through its kernel amounts to considering  the {\em minimal (symplectic) covering} as
\[
\overline{M} = \faktor{\widetilde{M}}{\Ker \ r} \to M.
\]
This is, in fact, the smallest cover of $M$ on which $\theta$ is exact. Then $\omega_0$ descends to a symplectic form on $\overline{M}$ and the deck group $\faktor{\pi_1(M)}{\Ker \ r}$ \ acts on it by homotheties.

\bigskip

We will discuss the compatibility of our reduction with an existing complex structure. This means considering a less widespread but better studied subclass of LCS manifolds, called locally conformally \K:
\begin{definition}
\label{def:LCK}
An LCS manifold $(M, \omega, \theta)$ is called \textit{locally conformally \K\ (LCK)} if it has a complex structure $J$ and a $J$-invariant Riemannian metric $g$ compatible with $\omega$ \ie
\[
\omega (\cdot, J \cdot) = g(\cdot, \cdot).
\]
\end{definition}

\begin{definition}
\label{def:thetasharp&omega}
On an LCS manifold $(M, \omega, \theta$):
\begin{enumerate}[1)]
	\item The vector field $\theta^\omega$, the $\omega$-dual of $\theta$, is called the \textit{anti-Lee field} or the \textit{s-Lee field}.
	\item If the manifold is LCK, the vector field $\theta^\sharp$, the $g$-dual of $\theta$, is called the \textit{Lee field}.
\end{enumerate}
\end{definition}

By applying the definitions, one can see that the relation between them is
\begin{equation}
\label{eq:thetasharp&omega}
J \theta^\sharp = \theta^\omega.
\end{equation}

\bigskip

In this paper, for  contact manifolds we adopt the following:
\begin{definition}
	A manifold $M^{2k+1}$ is called a \textit{contact manifold} if it has a one-form $\alpha$ such that $\alpha \wedge d\alpha^k \neq 0$.
\end{definition}

\smallskip

This is what some authors (\eg \cite{wi}) call a \textit{co-oriented contact manifold}. For an in depth look in the theory of contact manifolds, see, for example, \cite{ge2}.

Contact and LCS manifolds are connected in a natural way (see \eg \cite{va}):

\begin{proposition}
\label{prop:lcsCont}
Let $M^{2k+1}$ be a smooth manifold with a $1$-form $\alpha$. 

Then $\alpha$ is a contact form if and only if $d_{p_1^*\theta} (p_2^*\alpha)$ is an LCS form on $S^1 \times M$, with Lee form $p_1^* \theta$ (where the Lee form $\theta$ is the angular form on $S^1$ and $p_1$ and $p_2$ are the projections on the corresponding factors).

\begin{proof}
As there is no risk of confusion, we neglect the projection mappings.

The form $d_\theta \alpha$ is already $d_\theta$-closed. Notice that 
\[
(d_\theta \alpha)^{k+1} = (d\alpha - \theta \wedge \alpha)^{k + 1} = -(k + 1)\theta \wedge \alpha \wedge d\alpha,
\]
thus 
\[
(d_\theta \alpha)^{k+1} \neq 0 \text{ if and only if } \alpha \wedge d\alpha \neq 0.
\]
\end{proof}

\end{proposition}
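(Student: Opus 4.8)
The plan is to prove both implications at once by reducing the whole statement to a single pointwise nondegeneracy computation. Write $\omega := d_\theta\alpha = d\alpha - \theta\wedge\alpha$ on $S^1\times M$, suppressing the pullbacks $p_1^*,p_2^*$ as in the statement. Since $S^1\times M$ has even dimension $2k+2$, it is an admissible carrier for an LCS form, and since the angular form $\theta$ is closed, the Lee-form requirement $d\theta=0$ holds automatically. Moreover $\omega$ is tautologically $d_\theta$-closed: because $d\theta=0$ we have $d_\theta^2=0$, so $d_\theta\omega=d_\theta(d_\theta\alpha)=0$, which is precisely the structure equation $d\omega=\theta\wedge\omega$. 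Thus asserting that $\omega$ is LCS with Lee form $\theta$ is equivalent to asserting that $\omega$ is nondegenerate, and the entire proposition collapses to showing that $\omega$ is nondegenerate if and only if $\alpha$ is contact.

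First I would translate nondegeneracy into a top-power condition: on a $(2k+2)$-manifold a $2$-form is nondegenerate exactly when its $(k+1)$-st exterior power is nowhere zero, so it suffices to understand $\omega^{k+1}=(d\alpha-\theta\wedge\alpha)^{k+1}$. I would expand this by the binomial formula, which is legitimate because $d\alpha$ and $\theta\wedge\alpha$ are both even-degree forms and hence commute under the wedge product. Two simplifications do essentially all the work. The factor $\theta\wedge\alpha$ is a decomposable $2$-form built from a repeated $1$-form, so $(\theta\wedge\alpha)^2=\theta\wedge\alpha\wedge\theta\wedge\alpha=0$, which annihilates every term containing $j\ge 2$ copies of $\theta\wedge\alpha$. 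The remaining $j=0$ term $(d\alpha)^{k+1}$ is a $(2k+2)$-form pulled back from $M$; since $\dim M=2k+1$, every $(2k+2)$-form on $M$ vanishes, so this term is zero as well. Only the $j=1$ term survives, and after moving the even-degree factor $\theta\wedge\alpha$ past $(d\alpha)^k$ (which introduces no sign) one obtains
\[
\omega^{k+1}=-(k+1)\,\theta\wedge\alpha\wedge(d\alpha)^k.
\]

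Finally I would read off the equivalence. On $S^1\times M$ the angular form $\theta$ is nowhere vanishing and spans the cotangent direction of the $S^1$-factor, which is transverse to all forms pulled back from $M$; hence $\theta\wedge\beta$ is a nonzero top-degree form at a point precisely when $\beta=\alpha\wedge(d\alpha)^k$ is a nonzero top-degree form on $M$ at the corresponding point. Therefore $\omega^{k+1}$ is nowhere zero if and only if $\alpha\wedge(d\alpha)^k$ is nowhere zero, i.e. if and only if $\alpha$ is a contact form, which is exactly the asserted equivalence. The only genuine subtlety is the bookkeeping in the expansion—tracking the commuting even-degree factors and justifying the two vanishing terms—together with the dimension observation that forces $(d\alpha)^{k+1}=0$; everything else is formal.
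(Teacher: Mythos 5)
Your proof is correct and follows essentially the same route as the paper: both reduce the statement to the identity $(d_\theta\alpha)^{k+1} = -(k+1)\,\theta\wedge\alpha\wedge(d\alpha)^k$ and read off nondegeneracy from the contact condition. You simply make explicit the steps the paper leaves implicit (the binomial expansion, the vanishing of $(d\alpha)^{k+1}$ and $(\theta\wedge\alpha)^2$, and the transversality of $\theta$ to forms pulled back from $M$), which is fine.
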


\section{LCS reduction for a Lie group action}
\label{sectionLcsRed}

\subsection{Reduction {\em via} foliations} In order to adapt the symplectic reduction to the LCS case, we make use of a result by Domitrz  that gives a way to construct new LCS manifolds {\em via} a reduction of any submanifold, without the existence of a group action, if certain conditions are met:

\begin{theorem}\textnormal{(\cite{dom})}
	\label{th:dom}
	Let $Q$ be a submanifold of a locally conformally symplectic manifold $(M, \omega, \theta)$ and let $\mathcal{F}$ be the foliation $TQ \cap (TQ)^\omega$. Assume $\mathcal{F}$ has constant dimension.
	If $N = \faktor{Q}{\mathcal{F}}$ is a manifold of dimension greater than $2$ and the canonical projection $\pi: Q \to N$ is a submersion, then there exists a locally conformally symplectic form $\eta$ on $N$ and a smooth function $f$ on $Q$ such that 
	\[
	\pi^* \eta = e^f \omega_{|Q}
	\]
	if and only if $\theta$ is exact on the foliation $\mathcal{F}$, that is if there exists a smooth function $h$ on $Q$ such that $\theta_{|\mathcal{F}} = dh_{|\mathcal{F}}$.
	
	Moreover, we can take $f = h$; in particular, $f = 0$ if $\theta_{|\mathcal{F}} = 0$.
\end{theorem}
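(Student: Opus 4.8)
The plan is to follow the classical Marsden--Weinstein argument, upgrading each step to account for the twisting $d\omega=\theta\wedge\omega$. Write $j\colon Q\hookrightarrow M$ for the inclusion and $\omega_Q=j^*\omega$, $\theta_Q=j^*\theta$, so that $d\omega_Q=\theta_Q\wedge\omega_Q$ and $\mathcal{F}$ is exactly the pointwise radical of $\omega_Q$. First I would check that $\mathcal{F}$ is genuinely a foliation, i.e.\ that the (constant-rank) distribution $\mathcal{F}$ is involutive. For $X,Y\in\Gamma(\mathcal{F})$ and $Z\in\Gamma(TQ)$, every term in the Cartan formula for $d\omega_Q(X,Y,Z)$ except $-\omega_Q([X,Y],Z)$ vanishes, because $X,Y$ lie in the radical; on the other hand $(\theta_Q\wedge\omega_Q)(X,Y,Z)=0$ for the same reason. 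Hence $\omega_Q([X,Y],Z)=0$ for all $Z\in TQ$, so $[X,Y]\in\mathcal{F}$ and Frobenius applies. Note this step uses only the LCS identity, not the exactness of $\theta$.

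Next I would reduce the existence of $\eta$ to a basicness statement. Since $\pi$ is a submersion whose fibres are the (connected) leaves of $\mathcal{F}$, a form on $Q$ is a $\pi$-pullback precisely when it is horizontal ($\iota_X=0$) and invariant ($\mathcal{L}_X=0$) along $\mathcal{F}$. For $e^f\omega_Q$ horizontality is automatic, because $\iota_X\omega_Q=0$ for $X\in\mathcal{F}$. Invariance is the heart of the matter: using Cartan's formula and $d\omega_Q=\theta_Q\wedge\omega_Q$ one computes
\[
\mathcal{L}_X(e^f\omega_Q)=e^f\bigl(Xf+\theta_Q(X)\bigr)\,\omega_Q, \qquad X\in\Gamma(\mathcal{F}).
\]
Because $\omega_Q$ is non-degenerate transverse to $\mathcal{F}$ (so it is not annihilated), this vanishes iff $df(X)+\theta_Q(X)=0$ for all $X\in\mathcal{F}$, i.e.\ iff $\theta_Q|_{\mathcal{F}}=-df|_{\mathcal{F}}$. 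A global $f\in C^\infty(Q)$ with this leaf-wise property exists exactly when $\theta$ is exact on $\mathcal{F}$; writing $\theta|_{\mathcal{F}}=dh|_{\mathcal{F}}$ one may take $f=-h$ (equivalently $f=h$ under the opposite sign convention for $h$), and $f=0$ when $\theta|_{\mathcal{F}}=0$. Read in reverse, the same computation yields the ``only if'' direction, since $\pi^*\eta$ is automatically basic.

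It then remains to see that the resulting form $\eta$ on $N$, characterised by $\pi^*\eta=e^f\omega_Q$, is actually an LCS form. Non-degeneracy is immediate: $\mathcal{F}$ is precisely the kernel of $\omega_Q$, so $e^f\omega_Q$ induces a non-degenerate $2$-form on each quotient $T_xQ/\mathcal{F}_x\cong T_{\pi(x)}N$. For the Lee form, I would note $d(e^f\omega_Q)=(\theta_Q+df)\wedge(e^f\omega_Q)$ and that $\theta_Q+df$ is itself basic: it is horizontal by the defining property of $f$, while $\mathcal{L}_X(\theta_Q+df)=\iota_X\,d(\theta_Q+df)=0$ since $\theta$ is closed. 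Hence $\theta_Q+df=\pi^*\theta_N$ for a closed $\theta_N$ on $N$, and applying the injectivity of $\pi^*$ to $d(\pi^*\eta)=\pi^*(\theta_N\wedge\eta)$ yields $d\eta=\theta_N\wedge\eta$. The hypothesis $\dim N>2$ enters here to guarantee that the wedge map $\beta\mapsto\beta\wedge\eta$ is injective on $1$-forms, so that $\theta_N$ is the honest Lee form of $\eta$.

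The main obstacle is conceptual rather than computational: recognising that the only genuine constraint introduced by the conformal twist is the leaf-wise invariance of $e^f\omega_Q$, and that this single condition simultaneously (i) forces the exactness of $\theta$ on $\mathcal{F}$, (ii) pins down the conformal factor $f$ up to basic functions, and (iii) makes the candidate Lee form $\theta_Q+df$ descend. Once this is isolated, the symplectic-reduction skeleton (integrability of $\mathcal{F}$, non-degeneracy on the quotient) carries over essentially verbatim.
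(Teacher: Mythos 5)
Your proof is correct, but there is nothing in the paper to compare it against: the paper quotes this theorem from Domitrz \cite{dom} as a black box and never proves it (it only supplies, in \ref{rem:dimFol}, the involutivity of $\mathcal{F}$, which you also verify). Your argument is the natural basic-forms route one would expect: characterize $\pi$-pullbacks as forms that are horizontal and invariant along the (connected) leaves, compute $\mathcal{L}_X(e^f\omega_Q)=e^f\bigl(Xf+\theta_Q(X)\bigr)\omega_Q$ for $X\in\Gamma(\mathcal{F})$ to convert basicness into the leafwise equation $df|_{\mathcal{F}}=-\theta|_{\mathcal{F}}$, and then descend both the non-degeneracy (since $\mathcal{F}$ is exactly the kernel of $\omega_Q$) and the candidate Lee form $\theta_Q+df$, with $\dim N>2$ guaranteeing that wedging with $\eta$ is injective on $1$-forms so the Lee form is well defined. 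One point you raised deserves to be made explicit: under this paper's conventions ($d\omega=\theta\wedge\omega$, $d_\theta=d-\theta\wedge$), your computation is the correct one and forces $f=-h$, not $f=h$ as the statement asserts; indeed, taking $f=h$ would give $\mathcal{L}_X(e^h\omega_Q)=2e^h\theta_Q(X)\,\omega_Q$, which vanishes only when $\theta|_{\mathcal{F}}=0$. So the ``$f=h$'' in the statement is a sign slip (or a convention mismatch with \cite{dom}); it is harmless for the rest of the paper, since all that is ever used is the existence of \emph{some} conformal factor, and in the $G$-invariant case of \ref{th:LCSred} one has $\theta|_{\mathcal{F}}=0$ and $f=0$, where the discrepancy disappears.
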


\begin{remark}
\label{rem:dimFol}
As $d_\theta \omega = 0$, the distribution $\mathcal{F} = TQ \cap (TQ)^\omega$ is always involutive (\ie $[X, Y] \in \mathcal{F}$, for $X,Y\in\mathcal{F}$), for any submanifold $Q \subset (M, \omega, \theta)$. We shall discuss the constant dimension condition for our particular types of submanifolds in Subsection \ref{subsec:const_rk}.
\end{remark}

\bigskip

\subsection{Reduction {\em via} a Lie group action} Now take $(M, \omega, \theta)$ a connected LCS manifold and let $G$ be a connected Lie group acting on it. In the LCS setting, the analogous notions of Hamiltonian actions and momentum mapping from the symplectic case (except for minor sign differences in some cases, the same used by \cite{va}, \cite{hr}, \cite{bgp} and \cite{nico}, among others) are the following:

\begin{definition}
	The action of $G$ on $M$ is called \textit{twisted symplectic} if $G$ preserves the conformal LCS structure of $(\omega, \theta)$ i.e. 
	\[
	g^* \omega = e^{\varphi_g} \omega, \ \text{for some } \varphi_g \in C^\infty(M), \text{ for all } g \in G,
	\]
	and, correspondingly,
	\[
	g^* \theta = \theta + d\varphi_g.
	\]
	(note that, if $\dim(M) \ge 4$, the latter relation follows from the former).
	
\end{definition}

\bigskip

\begin{remark}
\label{rem:compvarphi}
The conformal factors have the composition rule
\[
\varphi_{gh} = \varphi_g \circ h + \varphi_h, \text{ for all } g, h \in G.
\]
\end{remark}

\begin{remark}
\label{rem:invarform}
If $G$ is compact, we can find a $G$-invariant form in the conformal class of $\omega$. Indeed, notice that 
\[
\omega_0 = \int_{G} g^* \omega \ dg
\]
is  $G$-invariant, where $dg$ is the Haar measure on $G$. Moreover,
\[
\omega_0 = \int_{G} g^* \omega \ dg = \big(\int_{G} e^{\varphi_g} dg\big) \omega,
\]
and so is an LCS form in the same conformal class as $\omega$. For brevity, denote
\[
F = \ln \int_{G} e^{\varphi_g} dg,
\]
so that $\omega_0 = e^F \omega$ has the Lee form $\theta_0 = \theta + dF$. This function has an interesting behavior with respect to the $G$-action: for an $h \in G$, we have
\begin{equation}
\begin{split}
\label{eq:FbehaviorG}
h^* F &= F \circ h = \ln \int_{G} e^{\varphi_g \circ h} dg = \ln \int_{G} e^{\varphi_{gh} - \varphi_h} dg = \ln \int_{G} e^{\varphi_{gh}} dg - \varphi_h \\
&= F - \varphi_h.
\end{split}
\end{equation}
(we used here \ref{rem:compvarphi}). We shall use this fact later.
\end{remark}

\bigskip

Introduce the ``twisted Lie derivative'' operator $\mathcal{L}_X^\theta$:
\[
\mathcal{L}_X^\theta \alpha = \mathcal{L}_X \alpha - \theta(X) \alpha,
\]
for any differential form $\alpha$ on $M$. One can see that this satisfies an equivalent of the Cartan formula:
\begin{equation}
	\label{eq:cartan}
	\mathcal{L}_X^\theta = d_\theta i_X + i_X d_\theta.
\end{equation}
For any $a \in \mathfrak{g}$, denote by $X_a \in \mathcal{X}(M)$ the corresponding fundamental vector field. Then
\begin{equation}
\begin{split}
\label{eq:liederiv}
	\mathcal{L}_{X_a}^\theta \omega
	 &= \mathcal{L}_{X_a} \omega - \theta(X_a) \omega\\
	 & = \lim\limits_{t \to 0} \frac{(\exp(ta))^* \omega - \omega}{t} - \theta(X_a) \omega  \\
	&= \lim\limits_{t \to 0} \frac{e^{\varphi_{\exp(ta)}} \omega - \omega}{t} - \theta(X_a) \omega\\
	& = \left(\frac{d}{dt}\restrict{|t=0}(e^{\varphi_{\exp(ta)}}) -\theta(X_a)\right)\omega.
	\end{split}
\end{equation}

\begin{remark} Observe  that 
\[
\mathcal{L}_{X_a}^\theta \omega = f \omega,
\]
for a smooth function $f$. However, using the Cartan formula  (\ref{eq:cartan}) and applying $d_\theta$, we see that $f$ must be constant, so $\mathcal{L}_{X_a}^\theta \omega = c_a \omega$ for some $c_a \in \R$, for each $a \in \mathfrak{g}$.
Again from $(\ref{eq:cartan})$, note that
\[
d_\theta (i_{X_a} \omega) = c_a \omega,
\]
so the dual of $X_a$ {\em via} $\omega$ is not $d_\theta$ - closed, an important difference from the case of symplectic actions.
\end{remark}

\bigskip
	
\begin{definition}
The action is called \textit{twisted Hamiltonian} if $i_{X_a}\omega$ is $d_\theta$ - exact, for all $a \in \mathfrak{g}$.	
\end{definition}

\bigskip

\begin{remark}
For later use, note that for a twisted Hamiltonian action, (\ref{eq:liederiv}) implies  
\begin{equation}
\label{eq:liederiv2}
\frac{d}{dt}_{|t=0}(e^{\varphi_{\exp(ta)}}) = \theta(X_a),\ \text{for all}\ \  a \in \mathfrak{g}.
\end{equation}
\end{remark}

Assume from now on that $\theta$ is not exact (\ie $M$ is not GCS).
Then $H^0_\theta(M) = 0$ (see \ref{prop:H0MN}), so there is a unique choice of $\rho_a \in C^\infty (M)$ such that
\[
d_\theta \rho_a = i_{X_a} \omega.
\]

\begin{definition}
The LCS \textit{momentum mapping} $\mu: M \to \mathfrak{g}^*$ associated to a twisted Hamiltonian action is 
\[
\mu(x)(a) = \rho_a (x),\ \  \text{for all}\ \ a \in \mathfrak{g}.
\]
\end{definition}

\begin{remark}
\label{rem:changemu}
If an action of $G$ on $(M, \omega, \theta)$ is twisted Hamiltonian with momentum mapping $\mu$, then the same action on $(M, e^f \omega, \theta + df)$ is still twisted Hamiltonian with momentum mapping $e^f \mu$.
\end{remark}

\begin{lemma}
\label{lem:echivar}
In the conditions above:
\begin{enumerate}[1)]
	\item \label{lem:echivar1} The map $\rho : \mathfrak{g} \to \mathcal{C}^\infty(M)$ is a Lie algebra homomorphism, with respect to the twisted Poisson bracket on $\mathcal{C}^\infty(M)$, defined as:
	\[
	\{f, g\}_\theta = \omega((d_\theta f)^\omega, (d_\theta g)^\omega),
	\]
	where, for a $1$-form $\alpha$, $\alpha^\omega$ is the $\omega$-dual of $\alpha$.
	
	\item \label{lem:echivar2} If $\omega$ is $G$-invariant, the momentum mapping $\mu: M \to \mathfrak{g}^*$ is equivariant with respect to the coadjoint action on $\mathfrak{g}^*$.
	
	\item \label{lem:echivar3} If $G$ is compact, $\mu$ satisfies a twisted equivariance:
	\[
	\mu(g \cdot x) = e^{\varphi_g} g \cdot \mu(x),\ \ \text{for all}\ \ g \in G, x \in M
	\] 
	(see \cite{hr} for equivalent formulations).
\end{enumerate}
\end{lemma}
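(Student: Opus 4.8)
The plan is to establish the three assertions in order, reducing the latter two to the first.

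\textbf{Part 1.} Since $i_{X_a}\omega = d_\theta\rho_a$, the $\omega$-dual of $d_\theta\rho_a$ is exactly $X_a$, so by definition $\{\rho_a,\rho_b\}_\theta = \omega(X_a,X_b)$, and it suffices to identify this with $\rho_{[a,b]}$. First I would record that a twisted Hamiltonian action satisfies $\mathcal{L}^\theta_{X_a}\omega = 0$: by \eqref{eq:cartan} and $d_\theta\omega = 0$ one has $\mathcal{L}^\theta_{X_a}\omega = d_\theta(i_{X_a}\omega) = d_\theta(d_\theta\rho_a) = 0$, so the constant $c_a$ of the earlier remark vanishes. Next I would invoke the twisted analogue of $\mathcal{L}_X i_Y - i_Y\mathcal{L}_X = i_{[X,Y]}$, which holds verbatim for $\mathcal{L}^\theta$ since the $\theta(X)$-terms cancel; applying $\mathcal{L}^\theta_{X_a} i_{X_b} - i_{X_b}\mathcal{L}^\theta_{X_a} = i_{[X_a,X_b]}$ to $\omega$ and using $\mathcal{L}^\theta_{X_a}\omega = 0$ gives $\mathcal{L}^\theta_{X_a}(i_{X_b}\omega) = i_{[X_a,X_b]}\omega$. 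Expanding the left side once more by \eqref{eq:cartan}, with $i_{X_b}\omega = d_\theta\rho_b$ and $d_\theta^2 = 0$, collapses it to $d_\theta\big(\omega(X_b,X_a)\big)$, so $d_\theta\big(\omega(X_b,X_a)\big) = i_{[X_a,X_b]}\omega$. As fundamental vector fields of a left action obey $[X_a,X_b] = -X_{[a,b]}$, the right side is $-d_\theta\rho_{[a,b]}$, whence $\omega(X_a,X_b) - \rho_{[a,b]}$ is $d_\theta$-closed; since $\theta$ is not exact, $H^0_\theta(M) = 0$ by \ref{prop:H0MN}, forcing $\{\rho_a,\rho_b\}_\theta = \omega(X_a,X_b) = \rho_{[a,b]}$.

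\textbf{Part 2.} I would first extract the pointwise identity behind Part 1: writing $\omega(X_a,X_b) = -(d_\theta\rho_b)(X_a) = -X_a(\rho_b) + \rho_b\,\theta(X_a)$ gives $X_a(\rho_b) = \rho_b\,\theta(X_a) - \rho_{[a,b]}$. The key observation is that $G$-invariance of $\omega$ kills the anomalous term: from $\mathcal{L}_{X_a}\omega = 0$, the ordinary Cartan formula and $d\omega = \theta\wedge\omega$ yield $0 = d(i_{X_a}\omega) + i_{X_a}(\theta\wedge\omega)$, and combining with $d_\theta(i_{X_a}\omega) = 0$ collapses this to $\theta(X_a)\,\omega = 0$, so $\theta(X_a) = 0$ for every $a$. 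Thus $X_a(\rho_b) = -\rho_{[a,b]}$, which is infinitesimal equivariance. To globalize, I fix $x$ and $a$ and differentiate $t\mapsto \Ad^*_{\exp(-ta)}\mu(\exp(ta)\cdot x)$: the product rule splits the derivative into a coadjoint contribution $\rho_{[a,b]}$ and a flow contribution $X_a(\rho_b) = -\rho_{[a,b]}$, which cancel, so the curve is constant and $\mu(\exp(ta)\cdot x) = \Ad^*_{\exp(ta)}\mu(x)$. Since the set of $g$ satisfying equivariance is closed under products and contains every one-parameter subgroup, connectedness of $G$ gives $\mu(g\cdot x) = \Ad^*_g\mu(x)$ for all $g$.

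\textbf{Part 3.} For compact $G$ I would pass to the $G$-invariant form $\omega_0 = e^F\omega$ of \ref{rem:invarform}, with Lee form $\theta_0 = \theta + dF$. By \ref{rem:changemu} the action stays twisted Hamiltonian for $\omega_0$ with momentum map $\mu_0 = e^F\mu$, and Part 2 applies to the $G$-invariant $\omega_0$, giving $\mu_0(g\cdot x) = \Ad^*_g\mu_0(x)$. Substituting $\mu_0 = e^F\mu$ and using linearity of the coadjoint action yields $\mu(g\cdot x) = e^{F(x)-F(g\cdot x)}\,\Ad^*_g\mu(x)$; finally the transformation law \eqref{eq:FbehaviorG}, in the form $F(g\cdot x) = F(x) - \varphi_g(x)$, converts the scalar into $e^{\varphi_g(x)}$, which is exactly the claimed twisted equivariance $\mu(g\cdot x) = e^{\varphi_g}\,g\cdot\mu(x)$.

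\textbf{Main obstacle.} The delicate point is Part 1: coaxing the twisted Cartan calculus into the clean identity $d_\theta\big(\omega(X_b,X_a)\big) = i_{[X_a,X_b]}\omega$ hinges on the vanishing $\mathcal{L}^\theta_{X_a}\omega = 0$ and on the careful cancellation of all $\theta$-terms, after which the uniqueness granted by $H^0_\theta(M) = 0$ does the essential work. In Part 2 the subtlety is not the integration, which is a routine one-parameter-subgroup argument, but recognizing that $G$-invariance of $\omega$ forces $\theta(X_a) = 0$ — precisely the fact that removes the discrepancy between the twisted and the ordinary equivariance, and that fails in general (where only the twisted equivariance of Part 3 survives).
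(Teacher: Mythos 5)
Your proof is correct and takes essentially the same route as the paper: Part 1 via the twisted Cartan calculus, the commutator identity $[\mathcal{L}^\theta_X, i_Y] = i_{[X,Y]}$ and uniqueness from $H^0_\theta(M) = 0$; Part 2 by reduction to one-parameter subgroups; Part 3 identically, via the averaged invariant form $\omega_0 = e^F\omega$ and \eqref{eq:FbehaviorG}. The only (harmless) repackaging is in Part 2, where you integrate the infinitesimal identity $X_a(\rho_b) = -\rho_{[a,b]}$ and obtain $\theta(X_a) = 0$ from invariance by a Cartan-formula computation, whereas the paper compares derivatives of the two sides of the equivariance identity directly and gets $\theta(X_a)=0$ from \eqref{eq:liederiv2}.
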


\begin{proof}
	\ref{lem:echivar1}) Since for any $a \in \mathfrak{g}, \rho_a$ is uniquely determined, we only need to prove that $\{ \rho_a, \rho_b \}_\theta$ satisfies the required condition for $\rho_{[a,b]}$ \ie 
	\[
	d_\theta \{ \rho_a, \rho_b \}_\theta = i_{X_{[a,b]}} \omega.
	\]
	Indeed,
	\begin{align*}
	d_\theta \{ \rho_a, \rho_b \}_\theta &= d_\theta (\omega((d_\theta \rho_a)^\omega, (d_\theta \rho_b)^\omega)) = d_\theta (\omega(X_a, X_b)) \\
	&= d_\theta i_{X_b} i_{X_a} \omega = \mathcal{L}^\theta_{X_b} i_{X_a} \omega - i_{X_b} d_\theta i_{X_a} \omega.
	\end{align*}
	As $[\mathcal{L}^\theta_X, i_Y] = i_{[X,Y]}$ and  the action is twisted Hamiltonian, so $\mathcal{L}^\theta_{X_a} \omega = \mathcal{L}^\theta_{X_b} \omega = 0$, we have:
	\begin{align*}
	\mathcal{L}^\theta_{X_b} i_{X_a} \omega - i_{X_b} d_\theta i_{X_a} \omega &= 
	i_{[X_b, X_a]} \omega + i_{X_a} \mathcal{L}^\theta_{X_b} \omega - i_{X_b} (\mathcal{L}^\theta_{X_a} \omega - i_{X_a}d_\theta \omega) \\
	&=i_{[X_b, X_a]} \omega = i_{X_{[a,b]}} \omega,
	\end{align*}
which proves the claim.
	
	\ref{lem:echivar2}) Recall the definition of the coadjoint action of $G$ on $\mathfrak{g}^*$: $\Ad^*(g)(\xi)(a) = \xi(\Ad(g^{-1})a)$. The condition that $\mu$ is equivariant is then equivalent to
	\begin{equation}
	\label{eq:echivar1}
	\rho_a(g \cdot x) = \rho_{\Ad(g^{-1})a}(x), \text{ for all } x \in M, \  a \in \mathfrak{g}.
	\end{equation}
	
	We mimic the  proof of the analogue fact in the symplectic case (see \cite[pp. 119-120]{br}).
	
	Since $G$ is connected and both sides of (\ref{eq:echivar1}) are $G$-actions, we only need to prove the formula for $g = \exp(tb)$ for any $b \in \mathfrak{g}$ and $t \in \R$ \ie 
	\[
	\rho_a(\exp(tb) \cdot x) = \rho_{\Ad(\exp(-tb))a}(x).
	\]
	Moreover, since this is true for $t = 0$, it is enough to prove that the derivatives of the two sides are equal.
	
	For the left hand side,
	\begin{align*}
	\frac{d}{dt} \rho_a(\exp(tb) \cdot x) &= d \rho_a (X_b(\exp(tb)\cdot x))\\
	& = \omega(X_a(\exp(tb)\cdot x) , X_b(\exp(tb)\cdot x)) \\ 
	& + \theta(X_b)(\exp(tb)\cdot x) \cdot \rho_a(\exp(tb)\cdot x) \\
	&= \omega ( X_{\Ad(exp(-tb))a}(x), X_{\Ad(exp(-tb))b}(x) ) \\
	&= \omega ( X_{\Ad(exp(-tb))a}(x), X_b(x) )
	\end{align*}
	where, on the third equality, we have used (\ref{eq:liederiv2}) to get $\theta(X_b) = 0$, the identity
	\[
	g_* (X_a(x)) = X_{\Ad(g)a}(g \cdot x)
	\]
	and that $\omega$ is $G$-invariant.
	
	As for the right hand side, using point \ref{lem:echivar1}), we have
	\begin{align*}
	\frac{d}{dt} \rho_{\Ad(\exp(-tb))a}(x) &= \rho_{[-b, \Ad(exp(-tb))a]}(x) = - \{ \rho_b, \rho_{\Ad(\exp(-tb))a} \}_\theta (x) \\ 
	&= \omega(X_{\Ad(\exp(-tb))a}(x), X_b(x)).
	\end{align*}
	
	\ref{lem:echivar3}) Take $\omega_0$ the $G$-invariant LCS form in the conformal class of $\omega$, as described in \ref{rem:invarform} and take
	\[
	F = \ln \int_{G} e^{\varphi_g} dg,
	\]
	so $\omega_0 = e^F \omega$. According to point \ref{lem:echivar2}), its momentum mapping, denoted by $\mu_0$, is equivariant; from \ref{rem:changemu}, $\mu_0 = e^F \mu$.
	
	Then
	\begin{equation*}
	\begin{split}
	\mu(g \cdot x) &= e^{-F(g \cdot x)} \mu_0 (g \cdot x)\\
	& = e^{-F(x) + \varphi_g(x)} g \cdot \mu_0(x) \ \ \text{by \eqref{eq:FbehaviorG}}\\
	&= e^{\varphi_g(x)} g \cdot \mu(x).
	\end{split}
	\end{equation*} 
\end{proof}

\subsection{Constancy of the rank of the characteristic distribution } \label{subsec:const_rk} 

Our aim is that the LCS reduction be a reduction along a foliation on a level set of $\mu$ (as it is in the symplectic case). Let $\xi$ be a regular value of $\mu$. With \ref{th:dom} in mind, we study the behavior of $\theta$ on the distribution $\mathcal{F} = T\mu^{-1}(\xi) \cap (T\mu^{-1}(\xi))^\omega$, which we call {\em characteristic distribution}. We already know it is involutive (see \ref{rem:dimFol}); next, we find sufficient conditions for it to have constant rank.

\bigskip

Take $x \in M$ and $v \in T_x\mu^{-1}(\xi) = \Ker\, d\mu$. Then, for all $a \in \mathfrak{g}$,
\begin{align*}
0 &= d_x \mu(v)(a) = d_x \rho_a(v)\\
& = (i_{X_a}\omega) (v) + \theta(v) \mu(x)(a) \\
&= \omega (X_a, v) + \theta(v) \xi(a)\\
& = \omega(X_a + \theta^\omega \xi(a), v),
\end{align*}
where $\theta^\omega$ is the s-Lee field (see \eqref{eq:thetasharp&omega}). This means that 
\begin{equation}
\label{eq:ortho}
(T\mu^{-1}(\xi))^\omega = \{ X_a +  \xi(a) \theta^\omega \ | \ a \in \mathfrak{g} \}.
\end{equation}

\begin{remark}
Note the difference with the symplectic case, where the symplectic dual of the level sets was the $G$-orbit. In fact, this intersects the tangent space of the orbit along a hyperplane, specifically the kernel of $\xi$.
\end{remark}

\begin{lemma}
\label{lem:thetaG}
$G$ acts on $\theta^\omega$ \textit{via} the formula
\[
g_* \theta^\omega = e^{-\varphi_{g^{-1}}} \theta^\omega - d(e^{-\varphi_{g^{-1}}})^\omega
\]

\begin{proof}
Indeed, for an $x \in M$ and $X \in \mathcal{X}(M)$,
\begin{equation*}
\begin{split}
\omega_x (g_* (\theta_{g^{-1}x}^\omega), X_x) &= \omega_x (g_* (\theta_{g^{-1}x}^\omega), g_* g^{-1}_* X_x) = (g^* \omega)_{g^{-1}x} (\theta^\omega_{g^{-1}x}, g^{-1}_* X_x) \\
&= e^{\varphi_g (g^{-1}x)} \omega_{g^{-1}x} (\theta^\omega_{g^{-1}x}, g^{-1}_* X_x) \\ 
&= e^{-\varphi_{g^{-1}}(x)} \theta_{g^{-1}x} (g^{-1}_* X_x) = e^{-\varphi_{g^{-1}}(x)} ((g^{-1})^* \theta)_x (X_x) \\
&= e^{-\varphi_{g^{-1}}(x)} (\theta + d \varphi_{g^{-1}})(X_x) \\
&= e^{-\varphi_{g^{-1}}(x)} \theta - d(e^{-\varphi_{g^{-1}}})(X_x).
\end{split}
\end{equation*}
\end{proof}
\end{lemma}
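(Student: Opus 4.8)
The plan is to pin down the vector field $g_*\theta^\omega$ by computing its $\omega$-pairing against an arbitrary tangent vector and then invoking non-degeneracy of $\omega$. Concretely, I would fix $x \in M$ and an arbitrary $X_x \in T_xM$, write $X_x = g_*(g^{-1}_*X_x)$, and transport the pairing to the base point $g^{-1}x$ via the pullback identity:
\[
\omega_x(g_*(\theta^\omega_{g^{-1}x}), X_x) = (g^*\omega)_{g^{-1}x}(\theta^\omega_{g^{-1}x}, g^{-1}_*X_x).
\]
This is the natural move because the $\omega$-dual is defined pointwise, so understanding the transported vector amounts to understanding how $\omega$ itself transforms under $g$.

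Next I would invoke the twisted symplectic hypothesis $g^*\omega = e^{\varphi_g}\omega$, which turns the right-hand side into $e^{\varphi_g(g^{-1}x)}\,\omega_{g^{-1}x}(\theta^\omega_{g^{-1}x}, g^{-1}_*X_x)$. The crucial bookkeeping step is to re-express the conformal factor at $x$: applying the composition rule of Remark \ref{rem:compvarphi} with $h=g^{-1}$ gives $\varphi_e = \varphi_g\circ g^{-1} + \varphi_{g^{-1}}$, and since $\varphi_e = 0$ we obtain $\varphi_g\circ g^{-1} = -\varphi_{g^{-1}}$, so the factor becomes $e^{-\varphi_{g^{-1}}(x)}$. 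Then, using the defining property $\omega(\theta^\omega,\cdot) = \theta$ of the $\omega$-dual followed by the pullback formula for one-forms, the pairing reduces to $e^{-\varphi_{g^{-1}}(x)}\,((g^{-1})^*\theta)_x(X_x)$. Applying the other half of the twisted symplectic relation, $g^*\theta = \theta + d\varphi_g$, at the element $g^{-1}$ yields $(g^{-1})^*\theta = \theta + d\varphi_{g^{-1}}$.

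The final algebraic step is to recognize the Leibniz identity $d(e^{-\varphi_{g^{-1}}}) = -e^{-\varphi_{g^{-1}}}\,d\varphi_{g^{-1}}$, so that
\[
e^{-\varphi_{g^{-1}}}(\theta + d\varphi_{g^{-1}}) = e^{-\varphi_{g^{-1}}}\theta - d(e^{-\varphi_{g^{-1}}}).
\]
Rewriting each summand as an $\omega$-dual — namely $\omega(e^{-\varphi_{g^{-1}}}\theta^\omega,\cdot) = e^{-\varphi_{g^{-1}}}\theta$ and $\omega(d(e^{-\varphi_{g^{-1}}})^\omega,\cdot) = d(e^{-\varphi_{g^{-1}}})$ — and then invoking non-degeneracy of $\omega$ (since $X_x$ was arbitrary) gives the asserted formula. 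I expect the only genuinely delicate point to be the careful tracking of base points ($x$ versus $g^{-1}x$) together with the sign identity $\varphi_g\circ g^{-1} = -\varphi_{g^{-1}}$; once those are handled correctly, the remaining manipulations are a routine unwinding of the definitions of $\theta^\omega$, the pullback operations, and the twisted symplectic conditions.
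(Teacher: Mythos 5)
Your proposal is correct and follows essentially the same route as the paper's proof: transport the pairing $\omega_x(g_*\theta^\omega_{g^{-1}x}, X_x)$ to the base point $g^{-1}x$, apply $g^*\omega = e^{\varphi_g}\omega$ and $(g^{-1})^*\theta = \theta + d\varphi_{g^{-1}}$, and finish with the Leibniz identity and non-degeneracy of $\omega$. In fact you make explicit two steps the paper leaves implicit — the sign identity $\varphi_g\circ g^{-1} = -\varphi_{g^{-1}}$ from the composition rule, and the final appeal to non-degeneracy — so nothing is missing.
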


\begin{lemma}
\label{lem:dimconst}
Let $(M, \omega, \theta)$ be a connected LCS manifold with a twisted Hamiltonian action of the connected Lie group $G$ and $\xi$ a regular value of the momentum mapping $\mu$. Assume that one of the following conditions is met:
\begin{itemize}
	\item The action of $G$ preserves the LCS form $\omega$.
	\item $G$ is compact and $\xi \wedge \theta_x(X_\cdot) = 0$ for all $x \in \mu^{-1}(\xi)$.
\end{itemize} 
Then the distribution $\mathcal{F} = T\mu^{-1}(\xi) \cap (T\mu^{-1}(\xi))^\omega$ is of constant rank.
\end{lemma}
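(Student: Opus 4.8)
The plan is to reduce the constant-rank question to a statement about a single antisymmetric bilinear form on $\mathfrak{g}$ which, under either hypothesis, turns out to be independent of the base point $x \in \mu^{-1}(\xi)$. The starting point is the description \eqref{eq:ortho} of the $\omega$-dual of the level set.

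First I would record the linear-algebraic shape of $\mathcal{F}$. Writing $U_x = (T_x\mu^{-1}(\xi))^\omega$, formula \eqref{eq:ortho} gives $U_x = \{ X_a + \xi(a)\theta^\omega \mid a \in \mathfrak{g}\}$, and since $W^{\omega\omega} = W$ for nondegenerate $\omega$ we have $T_x\mu^{-1}(\xi) = U_x^\omega$; hence
\[
\mathcal{F}_x = T_x\mu^{-1}(\xi) \cap U_x = U_x^\omega \cap U_x
\]
is precisely the radical of $\omega|_{U_x}$. Because $\xi$ is a regular value, $\mu^{-1}(\xi)$ has codimension $\dim\mathfrak{g}$, so $\dim U_x = \dim\mathfrak{g}$ and the linear map $\psi_x \colon \mathfrak{g} \to U_x$, $\psi_x(a) = X_a + \xi(a)\theta^\omega$, is an isomorphism. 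Consequently $\dim\mathcal{F}_x$ equals the dimension of the radical of the pulled-back form $B_x := \psi_x^*\omega$ on $\mathfrak{g}$, i.e. $\dim\mathcal{F}_x = \dim\mathfrak{g} - \rk B_x$.

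Next I would compute $B_x$ intrinsically. Using $\omega(\theta^\omega, Y) = \theta(Y)$ and $\omega(\theta^\omega,\theta^\omega) = 0$, together with the key identity $\omega_x(X_a, X_b) = \xi([a,b])$ valid for $x \in \mu^{-1}(\xi)$ — which follows from the homomorphism property in \ref{lem:echivar}, since $\omega(X_a,X_b) = \{\rho_a,\rho_b\}_\theta = \rho_{[a,b]}$ and $\rho_{[a,b]}(x) = \xi([a,b])$ — one expands $B_x(a,b) = \omega_x(\psi_x(a),\psi_x(b))$ to obtain
\[
B_x(a,b) = \xi([a,b]) + \xi(a)\,\theta_x(X_b) - \xi(b)\,\theta_x(X_a).
\]
Finally I would kill the last two terms under each hypothesis: if $G$ preserves $\omega$ then $\varphi_g \equiv 0$, so \eqref{eq:liederiv2} gives $\theta(X_a) = 0$ for every $a$; under the second hypothesis the vanishing of $\xi \wedge \theta_x(X_\cdot)$ says exactly that $\xi(a)\theta_x(X_b) - \xi(b)\theta_x(X_a) = 0$ for all $a,b$. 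In either case $B_x(a,b) = \xi([a,b])$, which is manifestly independent of $x$; hence $\rk B_x$ is constant on $\mu^{-1}(\xi)$ and so is $\dim\mathcal{F}_x$ (in fact it equals $\dim\mathfrak{g}_\xi$, the dimension of the coadjoint stabilizer).

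The main obstacle is the middle step: obtaining the closed formula for $B_x$, which rests on the equivariance-type identity $\omega_x(X_a,X_b) = \xi([a,b])$ and on keeping the sign conventions for $\theta^\omega$ straight. Once $B_x$ is in hand, both hypotheses enter in exactly the same way — they each annihilate the skew correction term $\xi(a)\theta_x(X_b) - \xi(b)\theta_x(X_a)$ — and the rank becomes point-independent. It is worth noting that the rank argument itself uses only this annihilation; compactness appears in the statement through its role elsewhere in the reduction (producing an invariant representative of $\omega$) rather than in this computation.
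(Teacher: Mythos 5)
Your proof is correct, but it takes a genuinely different route from the paper's. The paper characterizes $\mathcal{F}_x$ as $\{v \in (T_x\mu^{-1}(\xi))^\omega \ : \ d_x\mu(v) = 0\}$ and computes $d_x\mu(X_a + \xi(a)\theta^\omega)$ by differentiating the (twisted) equivariance $\mu(g\cdot x) = e^{\varphi_g}\, g\cdot\mu(x)$ along $g = \exp(ta)$ (see \eqref{eq:calculg_xi}); this is exactly where the two alternative hypotheses enter, since equivariance is supplied by \ref{lem:echivar}, either from $G$-invariance of $\omega$ (part 2) or from compactness of $G$ via averaging (part 3). You instead identify $\mathcal{F}_x$ with the radical of the pulled-back form $B_x = \psi_x^*\omega$ on $\mathfrak{g}$ and evaluate $B_x$ using only the infinitesimal Poisson-homomorphism property $\omega(X_a,X_b) = \{\rho_a,\rho_b\}_\theta = \rho_{[a,b]}$ (part 1 of \ref{lem:echivar}), which holds for every twisted Hamiltonian action with $\theta$ non-exact. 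The two computations evaluate the same pairing --- indeed $d_x\mu(\psi_x(a))(b) = \omega_x(\psi_x(b),\psi_x(a)) = -B_x(a,b)$ --- and both conclude $\mathcal{F}_x \simeq \mathfrak{g}_\xi$; but since your argument never invokes equivariance, it proves the second case of the lemma assuming only $\xi\wedge\theta_x(X_\cdot)=0$, with compactness of $G$ playing no role, a mild strengthening of the statement as written (compactness is of course still needed elsewhere in the paper, precisely where twisted equivariance itself is used). Your closing remark to this effect is accurate, your sign conventions are consistent with the paper's ($i_{\theta^\omega}\omega = \theta$, so $\omega(\theta^\omega,X_b)=\theta(X_b)$ and $\omega(X_a,\theta^\omega)=-\theta(X_a)$), and your reading of the hypothesis $\xi\wedge\theta_x(X_\cdot)=0$ as the vanishing of $\xi(a)\theta_x(X_b)-\xi(b)\theta_x(X_a)$ matches its use in \eqref{eq:calculg_xi}.
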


\begin{proof}
Take an $x \in \mu^{-1}(\xi)$. Since $\xi$ is a regular value, $\dim T_x \mu^{-1}(\xi) = \dim M - \dim G$, so 
\[
\dim (T_x \mu^{-1}(\xi))^\omega = \dim G.
\]
This means that the linear mapping
\begin{equation}
\label{eq:linearPSI}
\psi: \mathfrak{g} \to (T_x \mu^{-1}(\xi))^\omega, \ \psi(a) = X_a +  \xi(a) \theta^\omega
\end{equation}
is an isomorphism. 

Since $\omega$ is $G$-invariant or $G$ is compact, recall from \ref{lem:echivar} that $\mu$ is twisted equivariant (this defaults to equivariancy in the first case):
\[
\mu(g \cdot x) = e^{\varphi_g} g \cdot \mu(x),\ \ \text{for all}\ \ g \in G,\ x \in M.
\]
An element $v \in T_x\mu^{-1}(\xi) \cap (T_x\mu^{-1}(\xi))^\omega$ then satisfies:
\begin{equation}
\label{eq:calculg_xi}
\begin{split}
0 &= d_x\mu(v) = d_x\mu( X_a +  \xi(a) \theta^\omega)\\
& = \frac{d}{dt}\restrict{t=0} ( \mu (\exp(ta) \cdot x) ) + \xi(a) d_x\mu(\theta^\omega) \\
&= \frac{d}{dt}\restrict{t=0} (e^{\varphi_{\exp(ta)}} \exp(ta) \cdot \mu (x) ) + \xi(a) d_x\mu(\theta^\omega) \\
&= \frac{d}{dt}\restrict{t=0} (e^{\varphi_{\exp(ta)}} ) \xi + \frac{d}{dt}_{t=0} (\exp(ta) \cdot \xi ) + \xi(a) d_x\mu(\theta^\omega) \\
&= \theta(X_a) \xi + (\Ad^*)_*(a)(\xi) + \xi(a)d_x\mu(\theta^\omega) \\
&= (\Ad^*)_*(a)(\xi) + (\xi \wedge \theta_x(X_\cdot))(\cdot, a).
\end{split}
\end{equation}
For the last equality, we have used that, for any $b \in \mathfrak{g}$,
\begin{equation*}
d_x\mu(\theta^\omega)(b) = d_x \rho_b(\theta^\omega) = \omega_x(X_b, \theta^\omega) + \rho_b(x) \theta_x(\theta^\omega) = -\theta_x(X_b).
\end{equation*}

In both cases, our hypotheses imply that $\xi \wedge \theta_x(X_\cdot) = 0$ (see \ref{eq:liederiv2}), so \eqref{eq:calculg_xi} gives a characterization of the elements of the foliation $\mathcal{F}$:
\[
v = X_a +  \xi(a) \theta^\omega \in T_x\mu^{-1}(\xi) \cap (T_x\mu^{-1}(\xi))^\omega = \mathcal{F}_x \ \ \text{if and only if}\ \  a \in \mathfrak{g}_\xi,
\]
where $\mathfrak{g}_\xi$ is the Lie algebra of $G_\xi$. In particular,
\[
\mathcal{F}_x \iso g_\xi,
\]
\textit{via} the restriction of $\psi$, which guarantees the dimension of $\mathcal{F}$ is constant on $\mu^{-1}(\xi)$. 
\end{proof}

\bigskip

\subsection{The reduction theorem} We now have all the facts needed to state and prove our main result:

\begin{theorem}
\label{th:LCSred}
Let $(M, \omega, \theta)$ be a connected LCS manifold and $G$ a connected Lie group acting twisted Hamiltonian on it.

Let $\mu$ be the momentum mapping and $\xi \in \mathfrak{g}^*$ a regular value. Denote by $\mathcal{F} = T\mu^{-1}(\xi) \cap (T\mu^{-1}(\xi))^\omega$. Assume that one of the following conditions is met:
\begin{itemize}
	\item The action of $G$ preserves the LCS form $\omega$.
	\item $G$ is compact, $\xi \wedge \theta_x(X_\cdot) = 0$ for all $x \in \mu^{-1}(\xi)$ and there exists a function $h$ on $\mu^{-1}(\xi)$ such that $\theta_{| \mathcal{F}} = dh$.
\end{itemize}

If $N_\xi := \faktor{\mu^{-1}(\xi)}{\mathcal{F}}$ is a smooth manifold and $\pi : \mu^{-1}(\xi) \to N_\xi$ is a submersion, then $N_\xi$ has an LCS structure such that the LCS form $\omega_\xi$ satisfies
\[
\pi^* \omega_\xi = e^f \omega_{|\mu^{-1}(\xi)}
\]
for some $f \in C^\infty(\mu^{-1}(\xi))$. 

Moreover, one can take $f = h$; in particular, $f = 0$ if the action preserves the LCS form.

\begin{proof}

From \ref{lem:dimconst} we obtain that, in either case, $\mathcal{F}$ has constant rank and is thus a foliation. In the second case, with the assumption that $\theta$ is exact along $\mathcal{F}$, we are already fully in the conditions of \ref{th:dom}, and the proof is concluded.

{\bf If $\omega$ is $G$-invariant.} According to \ref{th:dom}, it is sufficient to prove that $\theta$ is exact on elements of the form described by equality (\ref{eq:ortho}). In fact, we will show it is zero. For this, recall  formulae (\ref{eq:liederiv}) and (\ref{eq:liederiv2}). Since $\theta$ is also $G$-invariant, we have 
\[
\theta(X_a) = 0, \ \ \text{for all}\ \ a \in \mathfrak{g},
\]
thus
\begin{equation}
\label{eq:Thredlcs1}
\theta(X_a + \xi(a) \theta^\omega ) = 0,
\end{equation}
and the conclusion follows immediately, allowing us to choose $f = 0$.
\end{proof}
\end{theorem}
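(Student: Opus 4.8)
The plan is to deduce Theorem \ref{th:LCSred} from the general reduction result of Domitrz, Theorem \ref{th:dom}, applied to the submanifold $Q = \mu^{-1}(\xi)$, whose characteristic distribution is exactly the $\mathcal{F} = T\mu^{-1}(\xi) \cap (T\mu^{-1}(\xi))^\omega$ of the statement. Theorem \ref{th:dom} requires two inputs: that $\mathcal{F}$ have constant rank, so that it is a genuine foliation, and that $\theta$ be exact along $\mathcal{F}$. The first I would take directly from Lemma \ref{lem:dimconst}, whose two hypotheses match precisely the two cases here; it identifies $\mathcal{F}_x$ with $\mathfrak{g}_\xi$ through the isomorphism $\psi$ of \eqref{eq:linearPSI}, so that $\rk \mathcal{F} = \dim \mathfrak{g}_\xi$ is constant on $\mu^{-1}(\xi)$.

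For the exactness of $\theta$ along $\mathcal{F}$ I would argue case by case. In the second case, with $G$ compact and the wedge condition in force, the existence of $h$ with $\theta_{|\mathcal{F}} = dh$ is part of the hypothesis, so there is nothing further to check: one is already inside the assumptions of Theorem \ref{th:dom}, which then produces $\omega_\xi$ together with the function $f = h$. When $\omega$ is $G$-invariant I would instead establish the stronger vanishing $\theta_{|\mathcal{F}} = 0$, which forces $f = 0$. By \eqref{eq:ortho} a general element of $\mathcal{F}_x$ has the form $X_a + \xi(a)\theta^\omega$; $G$-invariance means every conformal factor $\varphi_g$ is zero, so \eqref{eq:liederiv2} gives $\theta(X_a) = 0$ for all $a \in \mathfrak{g}$, while antisymmetry of $\omega$ gives $\theta(\theta^\omega) = \omega(\theta^\omega, \theta^\omega) = 0$. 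Summing yields $\theta(X_a + \xi(a)\theta^\omega) = 0$, as wanted.

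With both inputs secured, Theorem \ref{th:dom} delivers the LCS form $\omega_\xi$ on $N_\xi$ satisfying $\pi^*\omega_\xi = e^f \omega_{|\mu^{-1}(\xi)}$, with $f = h$ in the second case and $f = 0$ in the $G$-invariant case, which is exactly the asserted conclusion.

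The genuinely substantive work lies in the preparatory lemmas rather than in this assembly: the constant-rank statement of Lemma \ref{lem:dimconst}, which itself rests on the twisted equivariance of $\mu$ from Lemma \ref{lem:echivar}. I expect the conceptual crux to be precisely making the obstruction $\xi \wedge \theta_x(X_\cdot)$ vanish under hypotheses weak enough to be useful: it is automatic once $\theta(X_a) = 0$, as in the $G$-invariant case, but in the merely conformal case one must exploit compactness of $G$ to pass to a $G$-invariant representative $\omega_0 = e^F\omega$ and control the correction coming from $F$ via \eqref{eq:FbehaviorG}. A minor technical point I would carry over as a standing assumption is the dimension hypothesis $\dim N_\xi > 2$ appearing in Theorem \ref{th:dom}, needed to guarantee that the reduced form is honestly non-degenerate.
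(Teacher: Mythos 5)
Your proposal is correct and follows essentially the same route as the paper: constant rank of $\mathcal{F}$ via Lemma \ref{lem:dimconst}, then Domitrz's Theorem \ref{th:dom} applied to $Q=\mu^{-1}(\xi)$, with the second case concluded immediately from the hypothesis $\theta_{|\mathcal{F}}=dh$ and the $G$-invariant case handled by showing $\theta$ vanishes on $X_a+\xi(a)\theta^\omega$ using \eqref{eq:liederiv2} and antisymmetry of $\omega$. Your explicit remarks that $\theta(\theta^\omega)=\omega(\theta^\omega,\theta^\omega)=0$ and that the dimension hypothesis of \ref{th:dom} should be carried as a standing assumption are points the paper leaves implicit, but they do not change the argument.
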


\begin{remark}
One can see that, if the manifold on which $G$ acts is symplectic and the action admits a symplectic momentum mapping, our procedure gives precisely the classical reduction scheme. 
\end{remark} 

\begin{remark}(Reduction at zero)
\label{rem:redzero}
If $0$ is a regular value of the momentum mapping, then the reduction at zero does not depend on the choice of form in the LCS structure. This is in sharp contrast with the non-zero case, where the level sets themselves can vary wildly. One can also see that in this case, factoring by the given foliation is the same as factoring by $G$ itself.
\end{remark}

\bigskip

The factorization via the foliation $\mathcal{F}$ can also be seen to be a quotient via a group action; the difference from the symplectic case is that, in most cases, this will be a quotient via the action of the universal cover of the stabilizer $G_\xi$:
\begin{proposition}
\label{prop:actiuneGrup}
There is an action of $\tilde{G}_\xi$ on $\mu^{-1}(\xi)$ such that, under the conditions of \ref{th:LCSred}, 
\[
N_\xi = \faktor{\mu^{-1}(\xi)}{\tilde{G}_\xi}.
\]

\begin{proof}
Consider the map
\[
\psi: \mathfrak{g}_\xi \to \mathcal{F} \subset \mathcal{X}(\mu^{-1}(\xi)), \ \psi(a) = X_a + \xi(a)\theta^\omega.
\]
This can be seen to be a Lie algebra anti-isomorphism. Indeed,  
\begin{equation}
\begin{split}
\label{eq:actiuneGrup1}
[\psi(a), \psi(b)] &= [X_a + \xi(a)\theta^\omega, X_b + \xi(b)\theta^\omega] \\
&= [X_a, X_b] + [\xi(b)X_a - \xi(a) X_b, \theta^\omega] \\
&= -X_{[a,b]} + \mathcal{L}_{\xi(b)X_a - \xi(a) X_b} \theta^\omega.
\end{split}
\end{equation}
We will calculate the last term separately, using \ref{lem:thetaG} and \eqref{eq:liederiv2}:
\begin{equation*}
\begin{split}
\mathcal{L}_{\xi(b)X_a - \xi(a) X_b} \theta^\omega &= \xi(b) \lim\limits_{t \to 0} \frac{(e^{ta})_* \theta^\omega - \theta^\omega}{t} - \xi(a) \lim\limits_{t \to 0} \frac{(e^{tb})_* \theta^\omega - \theta^\omega}{t} \\
&= \xi(b) \lim\limits_{t \to 0} \frac{e^{-\varphi_{exp(-ta)}} \theta^\omega - d(e^{-\varphi_{exp(-ta)}})^\omega - \theta^\omega}{t} \\
&- \xi(a) \lim\limits_{t \to 0} \frac{e^{-\varphi_{exp(-tb)}} \theta^\omega - d(e^{-\varphi_{exp(-tb)}})^\omega - \theta^\omega}{t} \\
&= \lim\limits_{t \to 0} \frac{\xi(b)(e^{-\varphi_{exp(-ta)}} - 1)\theta^\omega - \xi(a)(e^{-\varphi_{exp(-tb)}} - 1)\theta^\omega}{t} \\
&+ \lim\limits_{t \to 0} \frac{\xi(a)d(e^{-\varphi_{exp(-tb)}})^\omega - \xi(b)d(e^{-\varphi_{exp(-ta)}})^\omega}{t} \\
&= (\xi(b) \theta(X_a) - \xi(a) \theta(X_b)) \theta^\omega \\
&+ \xi(a) \frac{d}{dt}_{|t=0} d(e^{-\varphi_{exp(-tb)}})^\omega - \xi(b) \frac{d}{dt}_{|t=0} d(e^{-\varphi_{exp(-ta)}})^\omega \\
&= (\xi(b) \theta(X_a) - \xi(a) \theta(X_b)) \theta^\omega \\
&+ \xi(a) (d(\frac{d}{dt}_{|t=0} e^{-\varphi_{exp(-tb)}}))^\omega - \xi(b) (d(\frac{d}{dt}_{|t=0} e^{-\varphi_{exp(-ta)}}))^\omega \\
&= (\xi(b) \theta(X_a) - \xi(a) \theta(X_b)) \theta^\omega + \xi(a) d(\theta(X_b))^\omega - \xi(b) d(\theta(X_a))^\omega \\
&= (\xi(b) \theta(X_a) - \xi(a) \theta(X_b)) \theta^\omega + d(\xi(a) \theta(X_b) - \xi(b) \theta(X_a))^\omega \\
&= 0,
\end{split}
\end{equation*}
the last equality following from a hypothesis of the reduction theorem. Coming back to \eqref{eq:actiuneGrup1}, we have
\begin{equation}
\label{eq:actiuneGrup4}
[\psi(a), \psi(b)] = -X_{[a,b]}.
\end{equation}

On the other hand,
\begin{equation}
\label{eq:actiuneGrup2}
\psi([a,b]) = X_{[a,b]} + \xi([a,b])\theta^\omega;
\end{equation}
define extension by left invariance for $\xi, a$ and $b$ on $G$:
\begin{equation*}
\begin{split}
\alpha \in \Omega^1(G), \ \alpha_g &= (L_{g^{-1}})^* \xi, \\
Y \in \mathcal{X}(G), \ Y_g &= (L_g)_* a, \ \ \ \ \forall g \in G. \\
Z \in \mathcal{X}(G), \ Z_g &= (L_g)_* b,
\end{split}
\end{equation*}
Note that the flows of $Y$ and $Z$ are $\phi^X_t(g) = g \cdot e^{ta}$ and $\phi^Y_t(g) = g \cdot e^{tb}$, respectively. Notice that, because $a \in g_\xi$, $\alpha$ is invariant under the flow of $X$:
\begin{equation*}
\begin{split}
((\phi^X_t)^*\alpha)_g (v) &= \alpha_{g \cdot e^{ta}} ((\phi^X_t)_* v) = \alpha_{g \cdot e^{ta}} ((R_{e^{ta}})_* v) \\
&= \xi ((L_{e^{-ta}})_* (L_{g^{-1}})_* (R_{e^{ta}})_* v) \\
&= (e^{ta} \cdot \xi) ((L_{g^{-1}})_* v) = \xi ((L_{g^{-1}})_* v) \\
&= \alpha_g (v).
\end{split}
\end{equation*}

Then
\begin{equation}
\begin{split}
\label{eq:actiuneGrup3}
\xi([a,b]) &= \alpha_e (\mathcal{L}_X Y) = \alpha_e (\frac{d}{dt}_{|t=0} (\phi^X_{-t})_* Y_{e^{ta}}) = \frac{d}{dt}_{|t=0} \alpha_e ( (\phi^X_{-t})_* Y_{e^{ta}}) \\
&= \frac{d}{dt}_{|t=0} ((\phi^X_{-t})^*\alpha)_{e^{ta}}(Y_{e^{ta}}) = \frac{d}{dt}_{|t=0} (\alpha_{e^{ta}}(Y_{e^{ta}})) \\
&= \frac{d}{dt}_{|t=0} \xi(b) = 0.
\end{split}
\end{equation}
The conclusion follows from \eqref{eq:actiuneGrup4}, \eqref{eq:actiuneGrup2} and \eqref{eq:actiuneGrup3}.

Using the Lie-Palais Theorem (\cite[p. 1047]{mil}), it follows that there is an action of $\tilde{G}_\xi$ on $\mu^{-1}(\xi)$ with $\psi$ as its differential. Thus,
\[
\faktor{\mu^{-1}(\xi)}{\mathcal{F}} = \faktor{\mu^{-1}(\xi)}{\tilde{G}_\xi}.
\]

\end{proof}
\end{proposition}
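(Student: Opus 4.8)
The plan is to realize the characteristic foliation $\mathcal{F}$ as the orbit foliation of an action of the simply-connected group $\tilde{G}_\xi$ covering the stabilizer $G_\xi$, so that factoring by $\mathcal{F}$ becomes factoring by a group. The natural candidate for the infinitesimal action is the map
\[
\psi: \mathfrak{g}_\xi \to \mathcal{X}(\mu^{-1}(\xi)), \quad \psi(a) = X_a + \xi(a)\theta^\omega,
\]
which, by the proof of \ref{lem:dimconst}, is at each point $x \in \mu^{-1}(\xi)$ a linear isomorphism onto $\mathcal{F}_x$. Thus the vector fields $\psi(a)$, for $a \in \mathfrak{g}_\xi$, span $\mathcal{F}$ everywhere, and the task is to promote this pointwise picture to a genuine group action whose orbits are the leaves of $\mathcal{F}$.

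The crucial step, and the one I expect to be the main obstacle, is verifying that $\psi$ is a Lie algebra anti-homomorphism, i.e. $[\psi(a),\psi(b)] = -\psi([a,b])$. Expanding the bracket gives $[\psi(a),\psi(b)] = [X_a,X_b] + \mathcal{L}_{\xi(b)X_a - \xi(a)X_b}\,\theta^\omega$, and since the fundamental vector fields of the left action satisfy $[X_a,X_b] = -X_{[a,b]}$, the claim reduces to showing that the Lie-derivative term vanishes and that $\xi([a,b]) = 0$ for $a,b \in \mathfrak{g}_\xi$. The first is the delicate calculation: using \ref{lem:thetaG} to differentiate the $G$-action on $\theta^\omega$ and \eqref{eq:liederiv2} to identify $\tfrac{d}{dt}\big|_{0} e^{-\varphi_{\exp(-ta)}} = \theta(X_a)$, the Lie derivative collapses to $(\xi(b)\theta(X_a) - \xi(a)\theta(X_b))\theta^\omega + d(\xi(a)\theta(X_b) - \xi(b)\theta(X_a))^\omega$, which is zero precisely because the hypothesis $\xi \wedge \theta_x(X_\cdot) = 0$ of \ref{th:LCSred} forces $\xi(a)\theta(X_b) = \xi(b)\theta(X_a)$ (this holds automatically in the $G$-invariant case, where $\theta(X_a)=0$).

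To dispatch $\xi([a,b]) = 0$, I would use that $\xi$ is a fixed point of the coadjoint action of $G_\xi$, so its infinitesimal variation along $\mathfrak{g}_\xi$ vanishes. Concretely, extend $\xi$, $a$, $b$ to a left-invariant $1$-form $\alpha$ and left-invariant vector fields $Y$, $Z$ on $G$, and compute $\xi([a,b]) = \alpha_e(\mathcal{L}_Y Z)$; since $a \in \mathfrak{g}_\xi$, the form $\alpha$ is invariant under the flow of $Y$, so this derivative is $0$.

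With $\psi$ established as an anti-homomorphism from the finite-dimensional Lie algebra $\mathfrak{g}_\xi$ into $\mathcal{X}(\mu^{-1}(\xi))$, I would invoke the Lie--Palais theorem to integrate it to an action of the simply-connected group $\tilde{G}_\xi$ whose fundamental vector fields are exactly the $\psi(a)$. By construction the orbit through any point is the integral manifold of the distribution spanned by $\psi(\mathfrak{g}_\xi)$, which is the leaf of $\mathcal{F}$; hence the orbit foliation coincides with $\mathcal{F}$ and
\[
N_\xi = \faktor{\mu^{-1}(\xi)}{\mathcal{F}} = \faktor{\mu^{-1}(\xi)}{\tilde{G}_\xi}.
\]
The only point requiring care in applying Lie--Palais is completeness of the generating fields; this is controlled by the good behavior of the flows of the $X_a$ along the level set.
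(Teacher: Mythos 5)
Your proposal is correct and follows essentially the same route as the paper: the same map $\psi(a) = X_a + \xi(a)\theta^\omega$, the same bracket computation reducing the anti-homomorphism property to the vanishing of $\mathcal{L}_{\xi(b)X_a - \xi(a)X_b}\theta^\omega$ (via \ref{lem:thetaG}, \eqref{eq:liederiv2} and the hypothesis $\xi \wedge \theta_x(X_\cdot) = 0$) and of $\xi([a,b])$ (via left-invariant extensions and invariance of $\alpha$ under the flow generated by $a \in \mathfrak{g}_\xi$), followed by the same appeal to Lie--Palais to integrate $\psi$ to an action of $\tilde{G}_\xi$ whose orbits are the leaves of $\mathcal{F}$. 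Your closing remark on completeness of the generating fields is a fair point of care that the paper itself leaves implicit.
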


\subsection{Cobordism class of the quotient} A natural question to ask, since the reduction described above produces a great number of LCS manifolds (either by varying the regular value or the LCS form in its conformal class), is whether there is any connection between them. This is what we study for the rest of this section. 

\begin{proposition}
\label{prop:cobordVal}
Let $(M, \omega, \theta)$ be a connected LCS manifold and $G$ a compact Lie group acting twisted Hamiltonian on it. Let $\mu$ be the momentum mapping and 
\[
\gamma :[0, 1] \to \mathfrak{g}^*
\]
a smooth path from $\xi_0$ to $\xi_1$. Assume that the hypotheses of \ref{th:LCSred} are met for all $\gamma(t), \ 0 \le t \le 1$ and, furthermore, that $\dim G_{\gamma(t)}$ is constant.

Then the LCS manifolds $N_{\xi_0}$ and $N_{\xi_1}$ are cobordant. 
\end{proposition}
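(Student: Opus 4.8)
The plan is to realize the cobordism as the reduction of a single \emph{total space} lying over the path $\gamma$. Concretely, I would form the map
\[
\tilde\mu \colon [0,1]\times M \to \mathfrak g^*, \qquad \tilde\mu(t,x) = \mu(x) - \gamma(t),
\]
and set $Z = \tilde\mu^{-1}(0) = \{(t,x) : \mu(x) = \gamma(t)\}$. The two boundary slices $\{0\}\times\mu^{-1}(\xi_0)$ and $\{1\}\times\mu^{-1}(\xi_1)$ are exactly the level sets I want to join, and the idea is to carry the foliated reductions of \ref{th:LCSred} along $\gamma$ so that reducing $Z$ yields a manifold whose boundary is $N_{\xi_0}\sqcup N_{\xi_1}$.

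First I would check that $Z$ is a neat submanifold with boundary of $[0,1]\times M$. Since $d\tilde\mu_{(t,x)}(s,v) = d_x\mu(v) - s\,\dot\gamma(t)$ and each $\gamma(t)$ is a regular value of $\mu$, the map $d_x\mu$ is already onto whenever $(t,x)\in Z$; hence $0$ is a regular value of $\tilde\mu$ and of its two boundary restrictions $x\mapsto\mu(x)-\xi_0$ and $x\mapsto\mu(x)-\xi_1$. Thus $Z$ is smooth of dimension $1+\dim M-\dim G$, with $\partial Z = (\{0\}\times\mu^{-1}(\xi_0))\sqcup(\{1\}\times\mu^{-1}(\xi_1))$. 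If $\mu$ is proper then $\mu^{-1}(\gamma([0,1]))$ is compact, so $Z$ is compact and the cobordism produced below is compact.

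Next I would assemble the characteristic foliations into one distribution on $Z$. For each $t$ write $\mathcal F_{\gamma(t)} = T\mu^{-1}(\gamma(t))\cap(T\mu^{-1}(\gamma(t)))^\omega$; by \ref{lem:dimconst} this has constant rank $\dim G_{\gamma(t)}$ and by \eqref{eq:ortho} it is spanned by $X_a+\gamma(t)(a)\theta^\omega$ with $a\in\mathfrak g_{\gamma(t)}$. I then define
\[
\tilde{\mathcal F}_{(t,x)} = \{0\}\times(\mathcal F_{\gamma(t)})_x \subset T_{(t,x)}Z.
\]
The hypothesis that $\dim G_{\gamma(t)}$ is constant makes the isotropy subalgebras $\mathfrak g_{\gamma(t)}$ a smooth subbundle of $[0,1]\times\mathfrak g$ (kernels of the smoothly varying, constant-rank map $a\mapsto\operatorname{ad}^*_a\gamma(t)$), so $\tilde{\mathcal F}$ is a smooth distribution of constant rank $\dim G_{\gamma(t)}$. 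It is involutive: sections of $\tilde{\mathcal F}$ have vanishing $[0,1]$-component, so their brackets do too and, at fixed $t$, reduce to brackets of sections of $\mathcal F_{\gamma(t)}$, which stay in $\mathcal F_{\gamma(t)}$ by \ref{rem:dimFol}. Moreover $\tilde{\mathcal F}$ is tangent to $\partial Z$ and restricts there to $\mathcal F_{\xi_0}$ and $\mathcal F_{\xi_1}$.

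Finally I would reduce. The coordinate $t$ is constant along the leaves of $\tilde{\mathcal F}$, so it descends to $\bar t\colon W\to[0,1]$ on the leaf space $W := \faktor{Z}{\tilde{\mathcal F}}$, with $\bar t^{-1}(t)=N_{\gamma(t)}$ and in particular $\bar t^{-1}(0)=N_{\xi_0}$, $\bar t^{-1}(1)=N_{\xi_1}$; a dimension count gives $\dim W = \dim N_{\xi_0}+1$, the constant stabilizer dimension being exactly what makes $\dim N_{\gamma(t)}$ independent of $t$. The main obstacle is showing that $W$ is a genuine (Hausdorff) smooth manifold with boundary $N_{\xi_0}\sqcup N_{\xi_1}$: slicewise, \ref{th:LCSred} only guarantees each $N_{\gamma(t)}$ is a manifold, and one must glue these into a smooth quotient of all of $Z$. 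I would get the local model from the Frobenius theorem applied to the constant-rank involutive $\tilde{\mathcal F}$, and upgrade it to a global manifold structure using the fiberwise group-action description of \ref{prop:actiuneGrup} together with the compactness of $G$: the actions of the $\tilde G_{\gamma(t)}$ vary smoothly in $t$ through the subbundle $\mathfrak g_{\gamma(t)}$, and their orbit spaces are precisely the assumed submersion quotients $N_{\gamma(t)}$. Once $W$ is known to be a manifold with boundary, it is by definition the required cobordism. I note that if $\bar t$ is in addition proper (e.g.\ the level sets are compact), Ehresmann's theorem even makes $W$ a product and the $N_{\xi_i}$ diffeomorphic, but only cobordism is claimed here.
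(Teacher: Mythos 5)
Your proof is correct and follows the same overall strategy as the paper's: assemble the level sets over the path into a single submanifold with boundary, foliate it by the union of the slicewise characteristic foliations (of constant rank precisely because $\dim G_{\gamma(t)}$ is constant), and take the cobordism to be the leaf space. The one genuine difference is the choice of total space: the paper works inside $M$ itself, setting $S = \mu^{-1}(\Im \gamma)$ with $\partial S = \mu^{-1}(\xi_0) \cup \mu^{-1}(\xi_1)$, whereas you work with the graph $Z = \tilde\mu^{-1}(0) \subset [0,1]\times M$. When $\gamma$ is an embedded arc the two are diffeomorphic via $(t,x)\mapsto x$, so nothing essential changes; but your version is strictly more robust, because $\mu^{-1}(\Im\gamma)$ is a manifold with the stated boundary only if $\Im\gamma$ is itself an embedded arc --- the paper tacitly assumes $\gamma$ injective and immersive, so self-intersecting paths, or loops with $\xi_0 = \xi_1$, fall outside its argument --- while your $Z$ needs only that each $\gamma(t)$ be a regular value, and the coordinate $t$ descends to the quotient to exhibit the boundary cleanly. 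One further remark: the step you flag as ``the main obstacle'' (that the leaf space of the assembled foliation is a genuine smooth manifold with boundary, not merely a slicewise union of manifolds) is passed over in the paper as well, where it is asserted to follow because all $\gamma(t)$ satisfy the hypotheses of \ref{th:LCSred}; your sketch via Frobenius charts for the constant-rank involutive distribution together with the $\tilde G_{\gamma(t)}$-actions of \ref{prop:actiuneGrup} is, if anything, more justification than the paper supplies, so your proposal meets the paper's own standard of rigor.
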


\begin{proof}
	Consider 
	\[
	S:= \mu^{-1}(\Im \gamma).
	\]
Then $S$ is	a submanifold with boundary of $M$, since all $\gamma(t)$ are regular values. More precisely, 
	\[
	\partial S = \mu^{-1}(\xi_0) \cup \mu^{-1}(\xi_1),
	\]
	so it is a cobordism between the two level sets. Define the foliation $\mathcal{F}$ on $S$:
	\[
	\mathcal{F}_x = T_x\mu^{-1}(\mu(x)) \cap (T_x\mu^{-1}(\mu(x)))^\omega, x \in S.
	\]
	Notice that $\mathcal{F}$ contains all the foliations that are the object of the previous theorem, for all regular values $\gamma(t)$. In particular, it is tangent to the level sets of $\mu$, has constant dimension by assumption and its leaves are the leaves of all those foliations.
	
	Since all $\gamma(t)$ satisfy the hypotheses of \ref{th:LCSred}, the set
	\[
	N := \faktor{S}{\mathcal{F}}\, (= \bigcup\limits_{0 \le t \le 1} N_{\gamma(t)})
	\]
	is a smooth manifold with boundary $\partial N = N_{\xi_0} \cup N_{\xi_1}$ and so is the cobordism we wanted. 
\end{proof}

\bigskip

Something similar happens when we vary the LCS form $\omega$ in its conformal class. For this, recall \ref{rem:changemu}: if a twisted Hamiltonian $G$-action on $(M, \omega, \theta)$ has the momentum mapping $\mu$, then the same  $G$-action on $(M, e^f \omega, \theta + df)$ is still twisted Hamiltonian and has the momentum mapping $e^f \mu$, which in turn has wholly different level sets.

\begin{proposition}
\label{prop:cobordFct}
Let $(M, \omega, \theta)$ be a connected LCS manifold and $G$ a compact Lie group acting twisted Hamiltonian on it. Let $\mu$ be the momentum mapping and 
\[
f :[0, 1] \to \mathcal{C}^\infty(M)
\]
a smooth path. Let $\xi \in \mathfrak{g}^*$ and assume that the hypotheses of \ref{th:LCSred} are met for all $G$-actions on $(M, e^{f_t}\omega, \theta + df_t)$ with regard to the fixed value $\xi$.

Denote by $\mu_t$ the momentum mapping corresponding to the LCS form $e^{f_t} \omega$ (so $\mu_t = e^{f_t} \mu$) and by $N_\xi^t$ the LCS manifold given by \ref{th:LCSred} applied to each such form.
	
Then the LCS manifolds $N_\xi^0$ and $N_\xi^1$ are cobordant.
\end{proposition}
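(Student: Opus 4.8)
The plan is to imitate the proof of Proposition \ref{prop:cobordVal}, replacing the one-parameter family of level sets of a single momentum map by the one-parameter family of level sets of the varying momentum maps $\mu_t$, all assembled into a single manifold with boundary sitting inside $M \times [0,1]$.

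First I would consider the map
$$
\Phi : M \times [0,1] \to \mathfrak{g}^*, \qquad \Phi(x,t) = \mu_t(x) = e^{f_t(x)}\mu(x),
$$
where the last equality is Remark \ref{rem:changemu}, and set $S := \Phi^{-1}(\xi)$. Since $\xi$ is a regular value of each $\mu_t$, the differential $d\Phi_{(x,t)}$ is already surjective in the $M$-directions, so $\xi$ is a regular value of $\Phi$ and of its restrictions $\Phi|_{M \times \{0\}} = \mu_0$ and $\Phi|_{M \times \{1\}} = \mu_1$. Hence $S$ is a submanifold with boundary of $M \times [0,1]$, with slice $S \cap (M \times \{t\}) = \mu_t^{-1}(\xi) \times \{t\}$ and
$$
\partial S = \big(\mu_0^{-1}(\xi) \times \{0\}\big) \cup \big(\mu_1^{-1}(\xi) \times \{1\}\big).
$$

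Next I would build a foliation on $S$ out of the individual reduction foliations. Rescaling $\omega$ by the positive function $e^{f_t}$ does not change $\omega$-orthogonality, so $(W)^{e^{f_t}\omega} = (W)^{\omega}$ for every subspace $W$, and the characteristic distribution of the reduction at $\xi$ for the form $\omega_t = e^{f_t}\omega$ is
$$
\mathcal{F}^t_x = T_x\mu_t^{-1}(\xi) \cap (T_x\mu_t^{-1}(\xi))^{\omega} = \psi_t(\mathfrak{g}_\xi),
$$
with $\psi_t(a) = X_a + \xi(a)\,\theta_t^{\omega_t}$, exactly as in the proof of Lemma \ref{lem:dimconst} applied to $(M, \omega_t, \theta+df_t)$. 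Viewing $\mathcal{F}^t_x \times \{0\}$ as a subspace of $T_{(x,t)}S = \ker d\Phi_{(x,t)}$, these assemble into a distribution $\mathcal{F}$ on $S$ tangent to the slices. Because the path $f_t$, and hence $\mu_t$, the s-Lee field $\theta_t^{\omega_t}$, and $\psi_t$, depend smoothly on $t$, the distribution $\mathcal{F}$ is smooth; its rank is the constant $\dim G_\xi$, since the coadjoint stabilizer $G_\xi$ of the fixed $\xi$ does not depend on $t$ and Lemma \ref{lem:dimconst} applies for every $t$ by the standing hypothesis that Theorem \ref{th:LCSred} is in force throughout. As $\mathcal{F}$ is tangent to the slices and restricts on each slice to the involutive distribution $\mathcal{F}^t$, it is itself involutive, hence a genuine foliation.

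Finally I would form $N := S / \mathcal{F}$. The function $t$ is constant along the leaves, so the projection $S \to [0,1]$ descends to $N \to [0,1]$, whose fiber over $t$ is precisely $N^t_\xi = \mu_t^{-1}(\xi)/\mathcal{F}^t$, a smooth LCS manifold by Theorem \ref{th:LCSred}; on the boundary slices this yields $\partial N = N^0_\xi \cup N^1_\xi$. As in Proposition \ref{prop:cobordVal}, $N$ is then the desired cobordism between $N^0_\xi$ and $N^1_\xi$. The step I expect to be the main obstacle is exactly the new ingredient relative to Proposition \ref{prop:cobordVal}: there a single form $\omega$ gave one globally-defined foliation, whereas here the defining data vary with $t$, so I must verify that $S/\mathcal{F}$ is Hausdorff and a genuine manifold with boundary rather than merely checking the slicewise quotients. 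I would handle this by choosing foliated charts on $S$ adapted simultaneously to $\mathcal{F}$ and to the slice fibration, using the smooth $t$-dependence to patch the slicewise submersion charts supplied by Theorem \ref{th:LCSred} into charts on $N$ near both interior and boundary leaves.
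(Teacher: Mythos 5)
Your proposal is correct and follows essentially the same route as the paper: the paper also assembles the level sets into $S = H^{-1}(\xi) \subset M \times [0,1]$ for $H(\cdot,t) = \mu_t$, defines the slicewise characteristic foliation $\mathcal{F}_{(x,t)} = T\mu_t^{-1}(\xi) \cap (T\mu_t^{-1}(\xi))^{e^{f_t}\omega}$ of constant rank $\dim G_\xi$, and takes $N = \faktor{S}{\mathcal{F}}$ as the cobordism with $\partial N = N_\xi^0 \cup N_\xi^1$. Your extra care about smooth $t$-dependence and the patching of slicewise charts only elaborates a point the paper treats tersely; it does not change the argument.
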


\begin{proof}
	Consider the map
	\[
	H:M \times [0,1] \to \mathfrak{g}^*,\ \  H(\cdot, t) = \mu_t.
	\]
	Since $\xi$ is a regular value for each $\mu_t$, the set 
	\[
	S := H^{-1}(\xi)
	\] is a submanifold with boundary of $M \times [0,1]$, 
	\[
	\partial S = \mu_0^{-1}(\xi) \cup \mu_1^{-1}(\xi).
	\]
	Again define the foliation $\mathcal{F}$ on $S$:
	\[
	\mathcal{F}_{x, t} = T_{(x,t)}\mu_t^{-1}(\xi) \cap T_{(x,t)}\mu_t^{-1}(\xi)^{e^{f_t}\omega},\ \  x \in S.
	\]
	Notice that $\mathcal{F}$ contains all the foliations that are the object of the previous theorem, for all $G$-actions on $(M, e^{f_t}\omega)$. In particular, it is tangent to the level sets of the $\mu_t$-s, has constant dimension equal to $\dim G_\xi$ and and its leaves are the leaves of all those foliations. 
	
	Since all LCS forms $e^{f_t} \omega$ satisfy the hypotheses of \ref{th:LCSred}, the set
	\[
	N := \faktor{S}{\mathcal{F}}\ ( = \bigcup\limits_{0 \le t \le 1} N_\xi^t)
	\]
	is a smooth manifold with boundary $\partial N = N_\xi^0 \cup N_\xi^1$ and so is the desired cobordism. 
\end{proof} 

\section{Compatibility with a complex structure}
\label{sectionLCK}

We now discuss the conditions under which our reduction is compatible with a complex structure. This is similar to \K \ reduction, except we will give the necessary conditions under which the complex structure on the manifold descends naturally to the quotient, as a `shifting trick' (see \eg \cite[pp. 135-137]{br}) is not possible in the LCK setting (because a product of LCK manifolds has no natural LCK structure).

Take $(M, J, g, \omega, \theta)$ an LCK manifold with a twisted hamiltonian action of the compact, connected Lie group $G$. Recall the notation from the previous section for the conformal factors involved:
\[
h^* \omega = e^{\varphi_h} \omega, \text{ for all } h \in G.
\]
The following proposition describes the relationship between the effect of $G$ on the metric $g$ and on the complex structure $J$:

\begin{proposition}
\label{prop:gJ} 
\begin{enumerate}[1)]
	\item \label{prop:gJi1} If $G$ acts on the metric $g$ conformally \ie 
	\[
	h^* g = e^{\psi_h} g, \text{ for all } h \in G,
	\]
	then $\varphi_h = \psi_h$ for all $h \in G$ and $J$ is $G$-invariant (i.e. the action is holomorphic).
	\item \label{prop:gJi2} If the action is holomorphic, then $h^* g = e^{\varphi_h} g$, for all $h \in G$.
\end{enumerate}

\begin{proof} Both are straightforward calculations:
	
	1) Observe that $\psi$ satisfies the same composition rule as $\varphi$, described in \ref{rem:compvarphi}; in particular, $\psi_h = -\psi_{h^{-1}} \circ h$. Take $x \in M$,  $v, w \in T_xM$ and $h \in G$. Then
	\begin{equation*}
	\begin{split}
	g_x (h_*^{-1} J h_* v, w) &= g_x (h_*^{-1} J h_* v, h_*^{-1} h_* w) = ((h^{-1})^* g)_{h(x)}(J h_* v, h_*w) \\
	&= e^{\psi_{h^{-1}}(h(x))} g_{h(x)} (J h_* v, h_*w) = -e^{-\psi_h(x)} \omega_{h(x)}(h_* v, h_*w) \\
	&= -e^{-\psi_h(x)} (h^* \omega)_x (v, w) = -e^{\varphi_h(x)-\psi_h(x)}\omega_x(v, w) \\
	&= e^{\varphi_h(x)-\psi_h(x)} g_x (Jv, w),
	\end{split}
	\end{equation*}
	so 
	\[
	h_*^{-1} J h_* = e^{\varphi_h-\psi_h} J. 
	\]
	But since the left-hand side is also a complex structure, we obtain $\varphi_h = \psi_h$ and $J$ is $G$-invariant.
	
	2) Again for $x \in M$,  $v, w \in T_xM$ and $h \in G$, we have
	\begin{equation*}
	\begin{split}
	(h^*g)_x (v, w) &= g_{h(x)}(h_* v, h_* w) = \omega_{h(x)}(h_* v, J h_* w) = \omega_{h(x)}(h_* v, h_* Jw) \\
	&= (h^* \omega)_x (v, Jw) = e^{\varphi_h(x)} \omega_x (v, Jw) \\
	&= e^{\varphi_h(x)} g (v, w),
	\end{split}
	\end{equation*}
	the conclusion we wanted.
\end{proof}
\end{proposition}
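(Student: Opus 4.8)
The plan is to reduce both statements to the single LCK compatibility identity $g(\cdot,\cdot)=\omega(\cdot,J\cdot)$ of Definition~\ref{def:LCK}, which (using the $J$-invariance of $g$) can equivalently be read as $\omega(v,w)=g(Jv,w)$ and $\omega(Jv,Jw)=\omega(v,w)$. Combined with the already-established conformal behaviour $h^*\omega=e^{\varphi_h}\omega$ and the cocycle rule for the conformal factors (Remark~\ref{rem:compvarphi}, which holds verbatim for $\psi$ and yields in particular $\psi_{h^{-1}}\circ h=-\psi_h$), each part becomes a bookkeeping computation: translate an expression in $g$ into one in $\omega$, slide the pullback $h^*$ across using the conformal factors, and translate back.

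I would treat part~2) first, as it is the shorter. Starting from $(h^*g)_x(v,w)$ and applying the compatibility relation gives $\omega_{h(x)}(h_*v,\,Jh_*w)$. Holomorphicity means $J$ commutes with $h_*$, so $Jh_*w=h_*Jw$ and the expression becomes $(h^*\omega)_x(v,Jw)=e^{\varphi_h(x)}\omega_x(v,Jw)=e^{\varphi_h(x)}g_x(v,w)$, which is exactly $h^*g=e^{\varphi_h}g$. For part~1) the strategy is to compute the conjugate endomorphism $h_*^{-1}Jh_*$ explicitly by pairing it against $g$: from $g_x(h_*^{-1}Jh_*v,\,w)$ I insert $w=h_*^{-1}h_*w$, pass to $(h^{-1})^*g=e^{\psi_{h^{-1}}}g$ at the point $h(x)$ (using $\psi_{h^{-1}}\circ h=-\psi_h$), convert to $\omega$ via compatibility, apply $h^*\omega=e^{\varphi_h}\omega$, and convert back to $g$. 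The net outcome is the endomorphism identity $h_*^{-1}Jh_*=e^{\varphi_h-\psi_h}J$.

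The one genuinely non-computational step---and the crux of the proposition---is the observation that $h_*^{-1}Jh_*$ is again an almost complex structure, since $(h_*^{-1}Jh_*)^2=h_*^{-1}J^2h_*=-\Id$. Squaring the identity $h_*^{-1}Jh_*=e^{\varphi_h-\psi_h}J$ then forces $e^{2(\varphi_h-\psi_h)}=1$, and since the exponents are real this gives $\varphi_h=\psi_h$; feeding this back yields $h_*^{-1}Jh_*=J$, i.e.\ $J$ is $G$-invariant and the action is holomorphic. I expect the only delicate point to be the correct handling of the composition rules for $\varphi$ and $\psi$ when the pullbacks are evaluated at the shifted base point $h(x)$; once those are tracked correctly, both conclusions are forced by the compatibility identity and the rigidity of the complex-structure condition.
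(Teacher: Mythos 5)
Your proposal is correct and follows essentially the same route as the paper's proof: both parts reduce to the compatibility $g(\cdot,\cdot)=\omega(\cdot,J\cdot)$, the conformal rule $h^*\omega=e^{\varphi_h}\omega$, and the cocycle identity $\psi_{h^{-1}}\circ h=-\psi_h$, arriving at the same endomorphism identity $h_*^{-1}Jh_*=e^{\varphi_h-\psi_h}J$. The only difference is cosmetic: you spell out, via squaring, the rigidity step that the paper compresses into the remark that the left-hand side is again a complex structure.
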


\bigskip

We can now give the reduction theorem for LCK manifolds; besides the restrictions present in the \K \ setting (see \cite[p. 137]{br}), the necessary requirement is that the s-Lee field $\theta^\omega$ be holomorphic \ie $\mathcal{L}_{\theta^\sharp} J = 0$.

\begin{theorem}
\label{thm:LCKred}
Let $(M, J, g, \omega, \theta)$ be a connected LCK manifold and $G$ a connected Lie group whose action on $M$ is twisted Hamiltonian and holomorphic.

Let $\mu$ be the momentum mapping and $\xi \in \mathfrak{g}^*$ a regular value. Denote by $\mathcal{F} = T\mu^{-1}(\xi) \cap (T\mu^{-1}(\xi))^\omega$. Assume that one of the following conditions is met:
\begin{itemize}
	\item The action of $G$ preserves the LCS form $\omega$.
	\item $G$ is compact, $\xi \wedge \theta_x(X_\cdot) = 0$ for all $x \in \mu^{-1}(\xi)$ and there exists a function $h$ on $\mu^{-1}(\xi)$ such that $\theta_{| \mathcal{F}} = dh$.
\end{itemize}

Assume further that $G_\xi = G$ and that the s-Lee field $\theta^\omega$ is holomorphic.
	
If $N_\xi := \faktor{\mu^{-1}(\xi)}{\mathcal{F}}$ is a smooth manifold and $\pi : \mu^{-1}(\xi) \to N_\xi$ is a submersion, then $N_\xi$ has an LCK structure $(J_\xi, g_\xi, \omega_\xi, \theta_\xi)$ which satisfies
\[
\pi^* \omega_\xi = e^f \omega_{|\mu^{-1}(\xi)}
\]
for some $f \in C^\infty(\mu^{-1}(\xi))$.

Moreover, one can take $f = h$; in particular, $f = 0$ if the action preserves the LCK form.

\begin{proof}
We already know from \ref{th:LCSred} that $N_\xi$ has the LCS form $\omega_\xi$ satisfying
\[
\pi^* \omega_\xi = e^f \omega_{|\mu^{-1}(\xi)}
\]
for an $f \in C^\infty(\mu^{-1}(\xi))$. 

We now show how one can define an almost complex structure on the quotient. Take $x \in \mu^{-1}(\xi)$. Recall from \ref{lem:dimconst} that, since $G_\xi = G$, we have
\[
(T_x\mu^{-1}(\xi))^\omega = \{ X_a + \xi(a) \theta^\omega \ | \ a \in \mathfrak{g} \} \subset T_x\mu^{-1}(\xi),
\]
so $\mathcal{F} = (T\mu^{-1}(\xi))^\omega$.

For brevity, denote by $A_x = (T_x\mu^{-1}(\xi))^\omega$ and take $B_x = A_x^\perp \cap T_x\mu^{-1}(\xi)$. Consequently,
\[
B_x = \{ v \in T_x M \ | \ g(v, w) = \omega(v, w) = 0, \forall w \in A_x \}.
\]
This proves that $B_x$ is $J$-invariant, since then
\begin{equation}
\label{eq:thmLCKred1}
J B_x = \{ v \in T_x M \ | \ g(Jv, w) = \omega(Jv, w) = 0, \forall w \in A_x \} = B_x.
\end{equation}

Moreover, $J$ is constant along $\mathcal{F}$ (\ie the foliation $\mathcal{F}$ is holomorphic): For any $v = X_a + \xi(a) \theta^\omega \in \mathcal{F}$,
\begin{equation}
\label{eq:thmLCKred2}
\mathcal{L}_v J = \mathcal{L}_{X_a} J + \xi(a) \mathcal{L}_{\theta^\omega} J = 0.
\end{equation}
since $J$ is $G$-invariant and we assumed $\theta^\omega$ is holomorphic.

Note that $\pi_{*|B_x}:B_x \to T_{\pi(x)}N_\xi$ is an isomorphism. Equations \eqref{eq:thmLCKred1} and \eqref{eq:thmLCKred2} then allow us to define a complex structure $J_\xi$ on $N_\xi$:
\[
(J_\xi)_{\pi(x)} (v) = \pi_{*|B_x} (J_x (\pi_{*|B_x})^{-1}(v)).
\]

We can now define $g_\xi$ on $N_\xi$ by $g_\xi (\cdot, \cdot) := \omega_\xi (\cdot, J_\xi \cdot)$. Its pullback is
\begin{equation*}
\begin{split}
(\pi^* g_\xi)_x (v, w) &= (g_{\xi})_x (\pi_* v, \pi_* w) = (\omega_{\xi})_x (\pi_* v, J_\xi \pi_* w) \\
&= (\omega_{\xi})_x (\pi_* v, \pi_* Jw) = (\pi^* \omega_\xi)_x (v, Jw) \\
&= e^{f(x)} \omega_x (v, Jw) = e^f(x) g(v, w), \ \forall v, w \in B_x,
\end{split}
\end{equation*}
thus $\pi^* g_\xi = e^f g_{|\mu^{-1}(\xi)}$ on the horizontal space $B_x$. In particular, $g_\xi$ is a Riemannian metric.

\hfill

We now only have to show that $J_\xi$ is in fact integrable. But this follows from the Newlander-Nirenberg Theorem (see \cite{nn}), as we can relate the Nijenhuis tensors of $J$ and $J_\xi$ \textit{via} the projection:
\[
\pi_* N_J (v, w) = N_{J_\xi}(\pi_* v, \pi_* w), \ \forall v, w \in B_x. 
\] 
\end{proof}
\end{theorem}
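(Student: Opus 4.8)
The plan is to verify the displayed identity relating the two Nijenhuis tensors, which finishes the proof: since $J$ is integrable on the complex manifold $M$ we have $N_J\equiv 0$, so the identity forces $N_{J_\xi}=0$ and Newlander--Nirenberg concludes. Fix $v,w\in B_x$, extend $\pi_*v,\pi_*w$ to vector fields $\bar V,\bar W$ on $N_\xi$, and let $V,W$ be their horizontal lifts, i.e.\ the sections of $B$ with $\pi_*V=\bar V$, $\pi_*W=\bar W$ (so that $V_x=v$, $W_x=w$). Such lifts are automatically foliate: for $u\in\Gamma(\mathcal F)$ we get $\pi_*[V,u]=[\bar V,0]=0$, hence $[V,u]\in\mathcal F$. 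Because $B$ is $J$-invariant by \eqref{eq:thmLCKred1}, the fields $JV,JW$ are again sections of $B$, and by the very definition of $J_\xi$ they are the horizontal lifts of $J_\xi\bar V,J_\xi\bar W$. In particular $V,W,JV,JW$ are all tangent to $\mu^{-1}(\xi)$ and $\pi$-related to their images on $N_\xi$.

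I would then expand $N_{J_\xi}(\bar V,\bar W)$ using that brackets of $\pi$-related fields are $\pi$-related, together with the identity $J_\xi\,\pi_*Z=\pi_*(JZ^B)$ for foliate $Z\in\Gamma(T\mu^{-1}(\xi))$, where $Z^B$ denotes the $g$-orthogonal projection onto $B$ (immediate from $\pi_*|_{\mathcal F}=0$ and the definition of $J_\xi$ on horizontal vectors). Writing $[JV,W]=H_1+Q_1$ and $[V,JW]=H_2+Q_2$ with $H_i\in\Gamma(B)$ and $Q_i\in\Gamma(\mathcal F)$, the four terms collapse to
\[
N_{J_\xi}(\bar V,\bar W)=\pi_*\big([JV,JW]-JH_1-JH_2-[V,W]\big),
\]
which is $\pi_*$ of a vector field manifestly tangent to $\mu^{-1}(\xi)$: the two brackets are brackets of fields tangent to the level set, and $JH_i\in JB=B$.

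The main obstacle is precisely that, unlike $B$, the vertical distribution $\mathcal F=(T\mu^{-1}(\xi))^\omega$ is not $J$-invariant (it would be so only if the level set were a complex submanifold), so $J$ cannot simply be pulled through the projection. I would dispatch this by invoking the integrability of $J$: from $N_J(V,W)=0$ along $\mu^{-1}(\xi)$ one rearranges the expression above as
\[
[JV,JW]-JH_1-JH_2-[V,W]=JQ_1+JQ_2 ,
\]
so in particular $JQ_1+JQ_2\in T\mu^{-1}(\xi)=B\oplus\mathcal F$ (the $g$-orthogonal splitting), even though its summands need not lie there. It then suffices to show the $B$-component of $JQ_1+JQ_2$ vanishes, for then it is vertical and projects to zero. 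For every $Y\in B$, using that $J$ is $g$-skew and that $JY\in B$,
\[
g(JQ_1+JQ_2,\,Y)=-g(Q_1+Q_2,\,JY)=0,
\]
since $Q_1+Q_2\in\mathcal F$ and $\mathcal F\perp_g B$ by the construction of $B$. Hence $JQ_1+JQ_2\in\mathcal F$ and $N_{J_\xi}(\bar V,\bar W)=\pi_*(JQ_1+JQ_2)=0$. As $\bar V,\bar W$ were arbitrary and the Nijenhuis tensor is pointwise, this proves $N_{J_\xi}=0$; by Newlander--Nirenberg $J_\xi$ is integrable, completing the theorem. (Note that the holomorphicity hypothesis $\mathcal L_{\theta^\omega}J=0$ has already been consumed, in \eqref{eq:thmLCKred2}, to make $J_\xi$ well defined on $N_\xi$; the integrability step rests only on the $J$-invariance of $B$ and the metric orthogonality $\mathcal F\perp_g B$.)
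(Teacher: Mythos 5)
Your argument is correct and follows the same route as the paper: granting the construction of $\omega_\xi$, $J_\xi$ and $g_\xi$ from the earlier steps (which you take as given), the paper's own treatment of integrability consists of asserting the identity $\pi_* N_J(v,w) = N_{J_\xi}(\pi_* v, \pi_* w)$ for $v,w \in B_x$ and invoking Newlander--Nirenberg, and what you have written is precisely a proof of that assertion, which the paper leaves unverified. The one genuinely delicate point — that although $\mathcal{F}$ is not $J$-invariant, the correction term $JQ_1 + JQ_2$ coming from the vertical parts of $[JV,W]$ and $[V,JW]$ is tangent to $\mu^{-1}(\xi)$ and $g$-orthogonal to $B$, hence lies in $\mathcal{F} = \ker \pi_*$ and is killed by the projection — is exactly the detail the paper omits, and your verification of it (via the $g$-skewness of $J$, the $J$-invariance of $B$, and the orthogonal splitting $T\mu^{-1}(\xi) = B \oplus \mathcal{F}$) is sound.
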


\bigskip

To make the reduction work in the LCK setting, we had to add two extra hypotheses: $G_\xi = G$ and that the s-Lee field is holomorphic. 

The first was to ensure that we can find a $J$-invariant subspace (denoted by $B_x$ above). This condition is the same for \K \ reduction done without resorting to the `shifting trick', and is assured, for example, if $G$ is abelian.

The second is unique to the locally conformal setting, and is satisfied for a large subclass of LCK manifolds:

\begin{definition}
	\label{def:vaisman}
	An LCK manifold $(M, J, g, \omega, \theta)$ is called \textit{Vaisman} if
	\[
	\nabla \theta = 0,
	\] 
	where $\nabla$ is the Levi-Civita connection of $g$. 
\end{definition}

\begin{proposition}(see \eg \cite{do})
\label{prop:equivVaisman}
Equivalently, an $LCK$ manifold $(M, J, g, \omega, \theta)$ is Vaisman if $\mathcal{L}_{\theta^\sharp} g = 0$ (the Lee field is Killing).
\end{proposition}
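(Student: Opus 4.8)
The plan is to show that both conditions $\nabla\theta = 0$ and $\mathcal{L}_{\theta^\sharp} g = 0$ are equivalent to the vanishing of one and the same object, the covariant derivative $\nabla\theta$, by exploiting the fact that the Lee form is closed. First I would record the standard identity for the Lie derivative of the metric along a vector field $X$, which holds because the Levi-Civita connection is both metric and torsion-free:
\[
(\mathcal{L}_X g)(Y, Z) = g(\nabla_Y X, Z) + g(\nabla_Z X, Y).
\]

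Next, specializing to $X = \theta^\sharp$ and using $\nabla g = 0$ to rewrite $g(\nabla_Y \theta^\sharp, Z) = (\nabla_Y \theta)(Z)$, this becomes
\[
(\mathcal{L}_{\theta^\sharp} g)(Y, Z) = (\nabla_Y \theta)(Z) + (\nabla_Z \theta)(Y),
\]
so that $\mathcal{L}_{\theta^\sharp} g$ is precisely twice the symmetric part of the $(0,2)$-tensor $\nabla\theta$. On the other hand, because the connection is torsion-free we have $d\theta(Y,Z) = (\nabla_Y\theta)(Z) - (\nabla_Z\theta)(Y)$, which is twice the antisymmetric part of $\nabla\theta$; since $\theta$ is closed (it is a Lee form), this antisymmetric part vanishes identically, i.e. $\nabla\theta$ is a symmetric tensor.

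Combining the two observations closes the argument: as $\nabla\theta$ coincides with its own symmetric part, we obtain the chain of equivalences $\nabla\theta = 0 \iff \Sym(\nabla\theta) = 0 \iff \mathcal{L}_{\theta^\sharp} g = 0$, which is exactly the stated equivalence. I do not expect a genuine obstacle here; the only points demanding care are the sign and normalization conventions in the two standard formulas invoked above, and the single real idea is that closedness of $\theta$ forces $\nabla\theta$ to be symmetric, so that its vanishing is detected entirely by the Killing condition on $\theta^\sharp$.
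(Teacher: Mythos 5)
Your proof is correct and complete: since $\theta$ is closed (being a Lee form), $\nabla\theta$ has no antisymmetric part, and the identity $(\mathcal{L}_{\theta^\sharp} g)(Y,Z) = (\nabla_Y\theta)(Z) + (\nabla_Z\theta)(Y)$ identifies the Killing condition with the vanishing of the symmetric part, hence of $\nabla\theta$ itself. The paper offers no proof of this proposition, delegating it to the reference \cite{do}, and your argument is exactly the standard one given there (note it is purely Riemannian --- the complex structure $J$ plays no role).
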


\bigskip

For Vaisman manifolds, the Lee field $\theta^\sharp$ is holomorphic (see \eg \cite{do}). But, as remarked in \eqref{eq:thetasharp&omega}, $\theta^\omega = J \theta^\sharp$, and the set of holomorphic vector fields is closed under $J$ (this can be seen by applying the Nijenhuis tensor).

On the other hand, under certain conditions, the converse is also true. In a recent result, A. Moroianu, S. Moroianu and Ornea \cite{mmo} showed that a compact LCK manifold whose Lee field is holomorphic and satisfies one of two conditions must be Vaisman:

\begin{theorem}\textnormal{(\cite{mmo})}
Let $(M, J, g, \theta)$ be a compact LCK manifold with holomorphic Lee field $\theta^\sharp$. Suppose that one of the following conditions is satisfied:
\begin{enumerate}[(i)]
	\item The norm of the Lee form $\theta$ is constant, or
	\item The metric $g$ is Gauduchon (which means by definition that the Lee form $\theta$ is co-closed with respect to $g$).
\end{enumerate}
Then $(M, J, g)$ is Vaisman.
\end{theorem}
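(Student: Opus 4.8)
The plan is to reduce the Vaisman condition to the vanishing of a single tensor and then run a Bochner argument. By \ref{prop:equivVaisman} and \ref{def:vaisman}, proving that $(M,J,g)$ is Vaisman amounts to showing that the Lee field $\theta^\sharp$ is Killing, i.e. that the endomorphism $S$ defined by $S(X)=\nabla_X\theta^\sharp$ vanishes. Two structural facts come essentially for free. First, since $\theta$ is closed, $\nabla\theta$ is a symmetric $(0,2)$-tensor, so $S$ is $g$-self-adjoint. Second, differentiating the conformally-K\"ahler relation on the minimal cover yields the standard LCK formula for $\nabla J$ purely in terms of $g, J, \theta, \theta^\sharp$; feeding this into $\mathcal{L}_{\theta^\sharp}J=0$, one checks that the Lee-direction term $(\nabla_{\theta^\sharp}J)$ vanishes identically and that holomorphy of $\theta^\sharp$ is equivalent to $S\circ J=J\circ S$. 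Thus $S$ is self-adjoint and commutes with $J$, and the entire goal is now $S=0$.

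The first key step I would isolate as a lemma: the s-Lee field $\theta^\omega=J\theta^\sharp$ (see \eqref{eq:thetasharp&omega}) is a holomorphic Killing field. Holomorphy is automatic, since on a complex manifold $V$ is holomorphic if and only if $JV$ is. For the Killing property I would compute $\nabla(J\theta^\sharp)=(\nabla J)\theta^\sharp+J S$ from the $\nabla J$ formula and observe that each of the three resulting pieces — the pointwise term in $\theta,\theta^\omega,g$, the term proportional to $|\theta|^2\,\omega$, and the term $g(JS\,\cdot\,,\cdot\,)$ — is antisymmetric, the last precisely because $S$ is self-adjoint and commutes with $J$ (so $JS$ is skew). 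Hence $\nabla(J\theta^\sharp)$ is skew and $J\theta^\sharp$ is Killing. Consequently, by the Bochner formula for Killing fields on the compact $M$, one gets $\int_M \Ric(\theta^\omega,\theta^\omega)=\int_M |\nabla\theta^\omega|^2\ge 0$.

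The second key step is the Weitzenb\"ock identity for the closed $1$-form $\theta$. Integrating $\nabla^*\nabla\theta=\Delta_H\theta-\Ric(\theta)$ against $\theta$ and using $d\theta=0$ gives $\int_M|\nabla\theta|^2=\int_M(\delta\theta)^2-\int_M\Ric(\theta^\sharp,\theta^\sharp)$. Here the two hypotheses enter: in the Gauduchon case $\delta\theta=0$ outright, removing the first term; in the constant-norm case $\nabla|\theta|^2=0$ forces $S\theta^\sharp=0$ (and hence $S\theta^\omega=0$), which I would use to simplify the divergence terms. The plan is then to compare $\int_M\Ric(\theta^\sharp,\theta^\sharp)$ with $\int_M\Ric(\theta^\omega,\theta^\omega)$: their difference is $\int_M[\Ric(J\theta^\sharp,J\theta^\sharp)-\Ric(\theta^\sharp,\theta^\sharp)]$, which by the curvature identities of the (locally) K\"ahler cover is an explicit expression in $\theta$, $|\theta|^2$ and $S$ modulo a total divergence. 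Combining the three inputs — nonnegativity from the Killing field $\theta^\omega$, the Weitzenb\"ock identity together with the hypothesis, and this Ricci comparison — should force $\int_M|\nabla\theta|^2\le 0$, hence $\nabla\theta=0$, i.e. Vaisman.

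The main obstacle I anticipate is exactly this Ricci comparison and its sign bookkeeping: one must carefully track the $\nabla J$ (equivalently $\theta$) corrections that make the Riemannian Ricci tensor fail to be $J$-invariant, show that the genuinely non-divergence remainder carries the correct sign, and verify that each hypothesis annihilates the leftover terms — plausibly, in the constant-norm case, by showing that $S$ is block-diagonal with $S\theta^\sharp=S\theta^\omega=0$, which reduces the whole problem to the $J$-invariant distribution $\langle\theta^\sharp,\theta^\omega\rangle^\perp$. Every other ingredient is either one of the structural identities above or a standard integration by parts on the compact manifold $M$.
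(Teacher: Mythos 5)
A preliminary remark: the paper does not prove this theorem at all --- it is quoted from \cite{mmo} and only used later --- so there is no internal proof to compare against; your proposal has to be judged on its own merits.

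Your structural skeleton is correct, and the step you flag as the ``main obstacle'' is in fact not one. Writing $T=\theta^\sharp$, $S=\nabla T$: closedness of $\theta$ makes $S$ self-adjoint; the LCK formula for $\nabla J$ gives $\nabla_TJ=0$, so holomorphy of $T$ is exactly $[S,J]=0$; and your three-term decomposition of $\nabla(J T)=(\nabla J)T+JS$ is indeed skew, so the anti-Lee field is Killing --- all of this checks out. Moreover the Ricci comparison can be done cleanly: since $[S,J]=0$ kills the $\nabla\theta$-contribution, the conformal transformation law of $\Ric$ under $g=e^fg_K$ leaves only the anti-invariant part of $\theta\otimes\theta$, giving the \emph{pointwise} identity $\Ric(\theta^\omega,\theta^\omega)-\Ric(\theta^\sharp,\theta^\sharp)=\frac{n-1}{2}|\theta|^4$ (where $\dim_\R M=2n$), with no divergence remainder. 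One caveat: mere nonnegativity of $\int_M\Ric(\theta^\omega,\theta^\omega)$ is not enough even in case (ii); you need the full Killing--Bochner identity with the explicit value $|\nabla\theta^\omega|^2=|S|^2+\frac{n-1}{2}|\theta|^4-|\theta|^2\operatorname{tr}S+2g(ST,T)$, plus $\int_M\operatorname{div}(|\theta|^2T)=0$. With those, case (ii) does close: $\delta\theta=-\operatorname{tr}S=0$ makes everything cancel down to $\int_M|S|^2=0$, i.e.\ Vaisman.

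The genuine gap is case (i). Run your bookkeeping with $|\theta|^2=c$ constant, so $ST=S\theta^\omega=0$: Killing--Bochner plus the Ricci comparison give $\int_M\Ric(\theta^\sharp,\theta^\sharp)=\int_M|S|^2$, while Weitzenb\"ock gives $\int_M\Ric(\theta^\sharp,\theta^\sharp)=\int_M(\operatorname{tr}S)^2-\int_M|S|^2$. The three inputs therefore combine to
\[
\int_M(\operatorname{tr}S)^2=2\int_M|S|^2,
\]
which contains no contradiction whatsoever: $S$ is symmetric, commutes with $J$ and annihilates $\theta^\sharp$ and $\theta^\omega$, so its eigenvalues come in $n-1$ pairs and pointwise $(\operatorname{tr}S)^2\le 2(n-1)|S|^2$; worse, for $n=2$ the displayed equality holds pointwise for \emph{every} such $S$, so the identity is vacuous. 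Hence ``should force $\int_M|\nabla\theta|^2\le 0$'' is false in case (i), and no rearrangement of the three \emph{integral} identities can repair it. What the constant-norm hypothesis actually buys is that the identities hold pointwise: $|\theta^\omega|$ is constant, so the Bochner formula for the Killing field $\theta^\omega$ reads $\Ric(\theta^\omega,\theta^\omega)=|\nabla\theta^\omega|^2$ pointwise, and combining this with the pointwise Ricci comparison and the pointwise Bochner formula for the constant function $|\theta|^2$ yields the first-order equation $T(\operatorname{tr}S)=c\operatorname{tr}S-2|S|^2$. Evaluating at a minimum of $\operatorname{tr}S$ (where its derivative vanishes) gives $\operatorname{tr}S\ge 0$ on $M$; since $\int_M\operatorname{tr}S=\int_M\operatorname{div}T=0$, this forces $\operatorname{tr}S\equiv 0$, and then the same equation gives $S\equiv 0$. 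Some pointwise, maximum-principle-type input of this kind is the missing idea; without it your case (i) does not close.
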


\bigskip

Observe that the first hypothesis is satisfied and the second is unneeded for the regular value $\xi = 0$. The reduction then coincides with the one already studied by Gini, Ornea and Parton \cite{gop}.

We end the section by studying the conditions under which the Vaisman structure on $M$ carries over to the reduced space. It turns out that an additional condition about the group action is necessary:

\begin{corollary}
\label{lem:VaismanRed}
Under the conditions of \ref{thm:LCKred}, if $M$ is Vaisman, the action of $G$ preserves the Riemannian metric (equivalently, preserves the LCK form), and the Lee field $\theta^\sharp$ is tangent to $\mu^{-1}(\xi)$, then $(N_\xi, J_\xi, g_\xi, \theta_\xi)$ is also Vaisman.
\end{corollary}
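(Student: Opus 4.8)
The strategy is to show that the reduced Lee field $\theta_\xi^\sharp$ is parallel with respect to the Levi-Civita connection of $g_\xi$, or equivalently (by \ref{prop:equivVaisman}) that it is Killing for $g_\xi$. Since Vaisman-ness is a metric condition, the key is to understand how the Lee field and the metric descend through the submersion $\pi : \mu^{-1}(\xi) \to N_\xi$. First I would identify the reduced Lee form: from \ref{th:LCSred} we have $\pi^* \omega_\xi = e^f \omega$ on $\mu^{-1}(\xi)$, and since the action preserves $\omega$ we may take $f = 0$, so $\omega_\xi$ pulls back to $\omega_{|\mu^{-1}(\xi)}$ and hence the reduced Lee form $\theta_\xi$ pulls back to $\theta_{|\mu^{-1}(\xi)}$ (the defining relation $d\omega_\xi = \theta_\xi \wedge \omega_\xi$ being the pushforward of $d\omega = \theta \wedge \omega$). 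The hypothesis that $\theta^\sharp$ is tangent to $\mu^{-1}(\xi)$ is what makes this work: it guarantees that $\theta^\sharp$ restricts to a vector field on the level set, and the next step is to check that this restriction is $\pi$-related to $\theta_\xi^\sharp$ on $N_\xi$.

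The heart of the argument is to verify that $\theta^\sharp$ is horizontal, i.e. lies in $B_x = A_x^\perp \cap T_x\mu^{-1}(\xi)$ in the notation of \ref{thm:LCKred}, so that it projects isomorphically onto $\theta_\xi^\sharp$. Here I would use two facts. First, on a Vaisman manifold the Lee field is $\omega$-orthogonal to the anti-Lee field and, more importantly, $\theta^\sharp$ is $g$-orthogonal to the fundamental vector fields $X_a$: because the action preserves $g$ and $\theta$ is $G$-invariant, one has $\mathcal{L}_{X_a}\theta^\sharp = 0$, and a short computation using $\theta(X_a) = 0$ (which follows from \eqref{eq:liederiv2} together with $G$-invariance) should show $g(\theta^\sharp, X_a + \xi(a)\theta^\omega) = 0$ for all $a \in \mathfrak{g}$. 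Indeed $g(\theta^\sharp, X_a) = \theta(X_a) = 0$ and $g(\theta^\sharp, \theta^\omega) = g(\theta^\sharp, J\theta^\sharp) = 0$ since $g$ is $J$-invariant; this is exactly the condition $\theta^\sharp \perp A_x$, so $\theta^\sharp \in B_x$. Combined with tangency, this places $\theta^\sharp$ in the horizontal distribution, and $\pi_* \theta^\sharp = \theta_\xi^\sharp$ follows by matching it against $\omega_\xi$ and $g_\xi$ through their pullback relations.

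With $\theta^\sharp$ identified as the horizontal lift of $\theta_\xi^\sharp$, the final step transfers the Killing property. Since $M$ is Vaisman, $\mathcal{L}_{\theta^\sharp} g = 0$ by \ref{prop:equivVaisman}. I would show $\mathcal{L}_{\theta_\xi^\sharp} g_\xi = 0$ by pulling back: for horizontal vectors $v, w \in B_x$ one has $(\pi^* g_\xi)(v,w) = g(v,w)$, and one computes $\mathcal{L}_{\theta_\xi^\sharp} g_\xi$ by evaluating on projected vectors and lifting the flow of $\theta^\sharp$, which preserves $\mu^{-1}(\xi)$ (by tangency) and preserves the foliation $\mathcal{F}$ (since $\theta^\sharp$ commutes with each generator $X_a + \xi(a)\theta^\omega$ of $\mathcal{F}$, using $\mathcal{L}_{\theta^\sharp} X_a = 0$ from holomorphicity and $G$-invariance, and $\mathcal{L}_{\theta^\sharp}\theta^\omega = \mathcal{L}_{\theta^\sharp}(J\theta^\sharp) = 0$). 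Thus the flow of $\theta^\sharp$ descends to the flow of $\theta_\xi^\sharp$ on $N_\xi$, and the isometry property passes to the quotient metric, giving $\mathcal{L}_{\theta_\xi^\sharp} g_\xi = 0$.

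\textbf{Main obstacle.} The delicate point I expect is verifying that the flow of $\theta^\sharp$ genuinely descends to $N_\xi$, which requires that $\theta^\sharp$ preserve the foliation $\mathcal{F}$ rather than merely being tangent to the level set; this rests on the bracket computations $[\theta^\sharp, X_a + \xi(a)\theta^\omega] \in \mathcal{F}$, i.e. on the holomorphicity of $\theta^\omega$ and the $G$-invariance of the generators interacting correctly with the Vaisman condition. Showing that these brackets stay inside $\mathcal{F}$ (and not just inside $T\mu^{-1}(\xi)$) is where the hypotheses must be used in combination, and I would expect to need the parallelism $\nabla\theta = 0$ to control how $\theta^\sharp$ interacts with $\theta^\omega$ and the $X_a$.
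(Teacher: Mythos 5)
Your proposal is correct and follows essentially the same route as the paper: reduce to the Killing condition via \ref{prop:equivVaisman}, show $\theta^\sharp$ is horizontal (it is $g$-orthogonal to $A_x$ because $\theta$ vanishes on $A_x$, plus the tangency hypothesis), identify $\pi_*\theta^\sharp = \theta_\xi^\sharp$ by matching against $g_\xi$ and $\theta_\xi$, and then push the Killing property through the Riemannian submersion. The ``main obstacle'' you anticipate is actually a non-issue: once $\pi_*\theta^\sharp = \theta_\xi^\sharp$ holds pointwise (i.e.\ $\theta^\sharp$ is $\pi$-related to the globally defined field $\theta_\xi^\sharp$), its flow automatically descends and brackets with vertical fields are automatically vertical, so the paper needs no computation of $[\theta^\sharp, X_a + \xi(a)\theta^\omega]$ and instead concludes directly by evaluating $\mathcal{L}_{\theta_\xi^\sharp} g_\xi$ on projections of horizontal projectable fields.
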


\begin{proof}
There are a several ways to prove that, in this case, the defining equation of the Vaisman manifold can be pushed down to the quotient. We give one below:

Since $G$ preserves the LCK form, recall from \ref{thm:LCKred} that we have $\pi^* \omega_\xi = \omega_{| \mu^{-1}(\xi)}$, which implies $\pi^* g_\xi = g_{|\mu^{-1}(\xi)}$ on the horizontal space and $\pi^* \theta_\xi = \theta_{| \mu^{-1}(\xi)}$.

According to \ref{prop:equivVaisman}, we need to prove that $\mathcal{L}_{\theta_\xi^\sharp} g_\xi = 0$ (the dual here is with respect to $g_\xi$).

Take $x \in \mu^{-1}(\xi)$. Recall the previous notations: $A_x = (T_x\mu^{-1}(\xi))^\omega$ and $B_x = A_x^\perp \cap T_x\mu^{-1}(\xi)$. Notice that, since $\theta_{|A_x} = 0$ and $\theta^\sharp_x \in T_x \mu^{-1}(\xi)$, we have $\theta^\sharp \in B_x$ \ie the Lee field is horizontal with respect to the submersion. 

Moreover, for a $v \in B_x$,
\begin{equation*}
\begin{split}
(g_\xi)_{\pi(x)} (\pi_* \theta^\sharp_x, \pi_* v) &= (\pi^* g_\xi)_x (\theta^\sharp_x, v) = g_x (\theta^\sharp_x, v) = \theta_x(v) \\ 
&= (\theta_\xi)_{\pi(x)} (\pi_* v),
\end{split}
\end{equation*}
\ie the Lee field is projectable and $\pi_* \theta^\sharp = \theta_\xi^\sharp$.

Take two vector fields $X$ and $Y$ tangent to $\mu^{-1}(\xi)$ which are projectable and horizontal (since $\pi$ is a surjective Riemannian submersion, every vector field on $N_\xi$ is the projection of exactly one such vector field). Then
\[
\mathcal{L}_{\theta_\xi^\sharp} g_\xi (\pi_* X, \pi_* Y) = \mathcal{L}_{\pi_* \theta^\sharp} g_\xi (\pi_* X, \pi_* Y) = \mathcal{L}_{\theta^\sharp} g (X, Y) = 0,
\]
which concludes the proof.
\end{proof}

\bigskip

In subsection \ref{subsec:sasaki}, we will study a class of Vaisman manifolds and Lie group actions for which the conditions above are satisfied. 

\section {Contact reduction}
\label{sectionContRed}

Contact reduction has been studied by many authors, the earliest work being by Guillemin and Sternberg \cite{gs}, referring  to reduction of symplectic cones. Geiges \cite{ge} showed that, under certain conditions, the quotient of the zero level set of the momentum mapping by the group action was naturally endowed with a contact structure. Lerman and Willett \cite{lw} studied the topological structure of contact quotients and later Willett \cite{wi} gave a new contact reduction method that, under certain conditions and factoring not the level sets of the momentum mapping but preimages of rays through regular values, produced a quotient with a contact structure that depended only on the initial contact structure and not on the form itself. Earlier, Albert \cite{al} had given another type of contact reduction that did factor level sets of the momentum mapping, much like the symplectic case, this construction however depending crucially on a choice of contact form in the contact structure (see \cite[Appendix]{wi}). 

Applying our LCS reduction described in Section \ref{sectionLcsRed} to a contact setting is the next logical step and will come naturally, as we shall see below. Comparing it with Albert's reduction (originally given intrinsically), we will find that the resulting quotient equipped with a contact form is the same (again, see \cite[Appendix]{wi}), although obtained by different methods.

We now apply the results of the previous section to the contact context. 

Consider an action of a Lie group $G$ on a contact manifold $(C, \alpha)$ such that $\alpha$ is $G$-invariant.
\begin{definition}
\label{def:contactMM}
The \textit{contact momentum mapping} is defined as
\[
\mu_C : C \to \mathfrak{g}^*, \mu_C (x)(a) = \alpha_x (X_a),
\]
where, as in the previous section, $X_a$ is the fundamental vector field, given by the action, corresponding to $a \in \mathfrak{g}$.
\end{definition}

\begin{claim}
\label{rem:contactMMEchvar}
The contact momentum mapping is equivariant.

\begin{proof}
	This is a standard check using only the definition:
	\begin{align*}
	\mu_C (g \cdot x)(a) &= \alpha_{g \cdot x} (X_a(g \cdot x)) = \alpha_{g \cdot x} (g_* g_*^{-1} X_a(g \cdot x)) \\
	&= (g^* \alpha)_x (g_*^{-1} X_a(g \cdot x)) = \alpha_x (g_*^{-1} X_a(g \cdot x)) \\
	&= \alpha_x \left( \frac{d}{dt}\restrict{t=0} ((g^{-1} \exp(ta) g) \cdot x) \right) \\
	&= \alpha_x \left( \frac{d}{dt}\restrict{t=0} ((g^{-1} \exp(ta) g) \cdot x) \right) \\
	&= \alpha_x ( X_{\Ad(g^{-1})(a)} ) = (g \cdot \mu_c(x))(a).
	\end{align*}
	\end{proof}
\end{claim}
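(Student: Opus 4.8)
The plan is to establish equivariance by a direct computation, reducing everything to two ingredients: the $G$-invariance of the contact form $\alpha$ and the standard transformation law for fundamental vector fields under the group action. Recalling from Lemma \ref{lem:echivar} that the coadjoint action is written $(g \cdot \xi)(a) = \xi(\Ad(g^{-1})a)$, the identity to establish is
\[
\mu_C(g \cdot x)(a) = \mu_C(x)(\Ad(g^{-1})a) = \alpha_x(X_{\Ad(g^{-1})a}),
\]
for all $g \in G$, $x \in C$, $a \in \mathfrak{g}$.

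First I would unwind the left-hand side from the definition, $\mu_C(g \cdot x)(a) = \alpha_{g \cdot x}(X_a(g \cdot x))$, and insert $g_* g_*^{-1} = \id$ in the argument so as to pull the action back onto the form, obtaining $(g^* \alpha)_x(g_*^{-1} X_a(g \cdot x))$. At this point the $G$-invariance of $\alpha$ eliminates the pulled-back form, leaving $\alpha_x(g_*^{-1} X_a(g \cdot x))$.

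The one substantive step is to identify $g_*^{-1} X_a(g \cdot x)$ with $X_{\Ad(g^{-1})a}(x)$. I would do this by expressing the fundamental vector field as the velocity of a one-parameter orbit and commuting the differential of the action past it: since $X_a(g \cdot x) = \frac{d}{dt}\big|_{t=0}(\exp(ta)\, g \cdot x)$, applying $g_*^{-1}$ gives $\frac{d}{dt}\big|_{t=0}(g^{-1}\exp(ta)g \cdot x)$, which is exactly the fundamental vector field attached to $\Ad(g^{-1})a = \frac{d}{dt}\big|_{t=0}(g^{-1}\exp(ta)g)$. Feeding this back in yields $\alpha_x(X_{\Ad(g^{-1})a}) = \mu_C(x)(\Ad(g^{-1})a) = (g \cdot \mu_C(x))(a)$, closing the chain.

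I do not expect a genuine obstacle, as the statement itself flags this as a routine check. The only points requiring care are the bookkeeping of variance conventions — ensuring $\Ad(g^{-1})$ rather than $\Ad(g)$ appears, which is forced by pulling back along $g^*$ rather than pushing forward — and matching the sign and order conventions of the coadjoint action to those fixed in Lemma \ref{lem:echivar}.
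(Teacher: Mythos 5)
Your proposal is correct and follows essentially the same route as the paper's proof: insert $g_* g_*^{-1}$, pull the action back onto $\alpha$, use $G$-invariance, and identify $g_*^{-1} X_a(g\cdot x)$ with $X_{\Ad(g^{-1})a}(x)$ by writing the fundamental vector field as the velocity of the curve $t \mapsto (g^{-1}\exp(ta)g)\cdot x$. The conventions ($\Ad(g^{-1})$ and the coadjoint action as in \ref{lem:echivar}) also match the paper exactly.
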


\subsection{Contact reduction and compatibility with LCS reduction}
\label{subsec:contactred}

Let $(C, \alpha)$ be a contact manifold. Then $M = S^1 \times C$ carries an LCS structure, given by
\[
\omega = d_\theta \alpha
\]
where the Lee form $\theta$ is the angular form on $S^1$ (see \ref{prop:lcsCont}); we commit a slight abuse by neglecting the projection mappings. 

Extend the action of $G$ on $C$ to $M = S^1 \times C$ trivially:
\[
g \cdot (t, x) = (t, g \cdot x),\ \ \ t \in S^1, \ x \in C.
\]
This is a twisted symplectic action, since it actually preserves the LCS form. As there is no risk of confusion, we  denote the new fundamental vector fields $X_a$, too. In fact, 
\[
0 = \mathcal{L}^\theta_{X_a} \alpha = d_\theta (\alpha(X_a)) + i_{X_a} \omega,
\]
so we see that the action is twisted Hamiltonian with the momentum mapping
\begin{equation}
\label{eq:momMappingRelation}
\mu : M \to \mathfrak{g}^*, \ \ \ \mu(t, x) = -\mu_C(x).
\end{equation}

Consequently, the level sets are  products: 
\[
\mu^{-1}(\xi) = S^1 \times \mu_C^{-1}(-\xi).
\]
Now assume that $-\xi$ is a regular value for the contact momentum mapping $\mu_C$ (equivalently, that $\xi$ is a regular value for the LCS momentum mapping). If the other hypotheses of \ref{th:LCSred} are met (namely, that the quotient $\faktor{\mu^{-1}(\xi)}{\mathcal{F}}$ is a manifold and the projection is a submersion), this creates an LCS manifold $N_\xi$. Denote by $(\omega_\xi, \theta_\xi)$ its LCS couple. We then have that $\pi^* \omega_\xi = \omega_{|\mu^{-1}(\xi)}$ and $\pi^* \theta_\xi = \theta_{|\mu^{-1}(\xi)}$.

\begin{claim} $N_\xi$ is a product.
\end{claim}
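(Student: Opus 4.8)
The plan is to show that the $S^1$-factor survives the reduction intact, so that $N_\xi \cong S^1 \times C_\xi$ where $C_\xi = \faktor{\mu_C^{-1}(-\xi)}{\mathcal{F}_C}$ is the contact reduction. The starting point is the product structure $\mu^{-1}(\xi) = S^1 \times \mu_C^{-1}(-\xi)$ already established, together with the identification of the foliation $\mathcal{F}$ from \eqref{eq:ortho}: since the action preserves $\omega$ we have $\theta(X_a)=0$, and by \ref{lem:dimconst} the leaves of $\mathcal{F}$ are spanned by the vector fields $X_a + \xi(a)\theta^\omega$ for $a \in \mathfrak{g}_\xi$. The key computation is to identify $\theta^\omega$, the $\omega$-dual of the Lee form, explicitly in terms of the product structure. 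Writing $\theta = dt$ (the angular form on $S^1$) and $\omega = d\alpha - \theta\wedge\alpha$, I would solve $i_{\theta^\omega}\omega = \theta$ directly; I expect $\theta^\omega$ to be (up to sign) the Reeb field $R$ of the contact structure, using the defining relations $i_R d\alpha = 0$ and $\alpha(R)=1$.

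Granting that $\theta^\omega = \pm R$, the leaves of $\mathcal{F}$ are spanned by $X_a + \xi(a)\theta^\omega = X_a - \xi(a) R$ (matching the contact foliation $\mathcal{F}_C$ of \textbf{Theorem B}, up to the sign coming from $\mu = -\mu_C$), and crucially these vector fields have no $\partial_t$-component: since the $G$-action on $M=S^1\times C$ is trivial on the $S^1$-factor, each $X_a$ is tangent to $C$, and $R$ is likewise tangent to $C$. Therefore $\mathcal{F}$ is entirely tangent to the $C$-factor, and the foliation on the product $S^1 \times \mu_C^{-1}(-\xi)$ is exactly $\{0\}_{TS^1} \oplus \mathcal{F}_C$. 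This means the quotient factors as
\[
N_\xi = \faktor{(S^1 \times \mu_C^{-1}(-\xi))}{\mathcal{F}} = S^1 \times \faktor{\mu_C^{-1}(-\xi)}{\mathcal{F}_C} = S^1 \times C_\xi,
\]
and the projection $\pi$ is $\id_{S^1} \times \pi_C$.

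The remaining point is to check that the reduced LCS data respects this splitting, i.e. that $\theta_\xi$ is the angular form on the $S^1$-factor of $N_\xi$ and $\omega_\xi = d_{\theta_\xi}\alpha_\xi$ for a reduced contact form $\alpha_\xi$ on $C_\xi$. Since $\pi^*\theta_\xi = \theta_{|\mu^{-1}(\xi)} = dt$ and $\pi$ is the identity on $S^1$, the form $\theta_\xi$ is indeed the angular form pulled back from the $S^1$-factor. For the symplectic form, I would invoke \ref{prop:lcsCont} in reverse: the reduced contact form $\alpha_\xi$ is obtained as the descent of $\alpha_{|\mu_C^{-1}(-\xi)}$ along $\pi_C$ (this is the content of \textbf{Theorem B}, $\pi_C^*\alpha_\xi = \alpha$), and then $\omega_\xi = d_{\theta_\xi}\alpha_\xi$ realizes $N_\xi$ as the LCS manifold $S^1 \times C_\xi$ of the standard construction.

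The main obstacle I anticipate is the explicit identification $\theta^\omega = \pm R$ on the product: one must verify the $\omega$-duality computation carefully, keeping track of how $i_{\partial_t}$ and $i_X$ (for $X$ tangent to $C$) interact with $\omega = d\alpha - dt\wedge\alpha$, and in particular confirming that $\theta^\omega$ has no horizontal ($TC$-transverse to $R$) component. Once $\theta^\omega$ is pinned down, everything else is bookkeeping: matching the two foliations and checking the product splitting descends to the quotient forms. A secondary subtlety is the sign bookkeeping induced by $\mu = -\mu_C$, which converts the $+\xi(a)\theta^\omega$ appearing in \ref{lem:dimconst} into the $-\xi(a)R$ of the contact foliation $\mathcal{F}_C$ at the regular value $-\xi$, but this is purely a matter of tracking signs rather than a genuine difficulty.
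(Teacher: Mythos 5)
Your proposal is correct and follows the same skeleton as the paper's proof: start from $\mu^{-1}(\xi)=S^1\times\mu_C^{-1}(-\xi)$, show the characteristic foliation is tangent to the $C$-factor and independent of the $S^1$-coordinate, and conclude that the leaf space splits as $S^1\times C_\xi$. The one step you do differently is the tangency of $\theta^\omega$. The paper gets it for free from skew-symmetry: $\theta(\theta^\omega)=\omega(\theta^\omega,\theta^\omega)=0$ and $\Ker\,\theta=TC$, so $\theta^\omega$ is tangent to $C$; the independence of $\mathcal{F}|_{\{t\}\times C}$ from $t$ is then deduced from the invariance of $\omega$ under the rotations $\phi_t$. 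You instead identify $\theta^\omega$ outright by solving $i_{\theta^\omega}\omega=\theta$; this computation does go through, since $i_R(d\alpha-\theta\wedge\alpha)=-\theta(R)\alpha+\alpha(R)\theta=\theta$, giving $\theta^\omega=R$ exactly (with a \emph{plus} sign, so the generators of $\mathcal{F}$ are $X_a+\xi(a)R$, which is the foliation of \ref{thm:redcont} at the contact regular value $-\xi$; your line ``$X_a+\xi(a)\theta^\omega=X_a-\xi(a)R$'' has the sign backwards, but this is immaterial here, since only tangency to $C$ and $t$-independence are used). Your route is heavier for this particular claim but buys two things: because $X_a$ and $R$ are both pulled back from $C$, the $t$-independence of the foliation is automatic, with no need for the rotation-invariance argument; and the identification $\theta^\omega=R$ is precisely what the paper must prove anyway in the following claim (to get $\alpha(\theta^\omega)=1$ and hence that $\alpha$ descends), so you front-load work needed later. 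One caution: your closing paragraph, matching the reduced data ($\theta_\xi$ angular, $\omega_\xi=d_{\theta_\xi}\alpha_\xi$, $\pi_C^*\alpha_\xi=\alpha$), is the content of the paper's \emph{subsequent} claims and should not be cited via Theorem B as if already established; since the claim at hand is only the diffeomorphic product structure, which your argument proves before that point, this does not affect correctness.
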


\begin{proof}Indeed, recall the proof of \ref{th:LCSred}: the foliation $\mathcal{F}$ has elements of the form
\[
v = X_a + \xi(a) \theta^\omega.
\]
But $X_a$ is tangent to $C$, and so is $\theta^\omega$, since 
\[
\theta(\theta^\omega) = 0 \text{ if and only if } \theta^\omega \in \Ker \ \theta = TC,
\]
so $\mathcal{F}$ is tangent to $C$. Denote by
\[
\phi_t : S^1 \times C \to S^1 \times C, \ \phi_t (s, x) = (s + t, x);
\]
since $\phi_t^* \omega = 0$, $\mathcal{F}_{| \{t\} \times C}$ is independent of $t$ as a foliation on $C$ - denote it by $\mathcal{F}_C$.

We come to the conclusion that the quotient $N_\xi$ is itself of product form, say
\[
N_\xi = S^1 \times C_\xi,
\]
and 
\[
C_\xi = \faktor{C}{\mathcal{F}_C}.
\]
\end{proof}

\begin{claim}
$C_\xi$  has a contact form, descending from $\alpha$ on $C$. 
\end{claim}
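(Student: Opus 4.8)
The plan is to show that the contact form $\alpha$ on $C$ descends to a contact form $\alpha_\xi$ on $C_\xi = \faktor{C}{\mathcal{F}_C}$ by verifying the two standard conditions for a form to descend along the submersion $\pi_C : C \to C_\xi$: that $\alpha$ is invariant along the leaves (i.e. $\mathcal{L}_v \alpha = 0$ for $v \in \mathcal{F}_C$) and that $\alpha$ annihilates the foliation directions ($i_v \alpha = 0$ for $v \in \mathcal{F}_C$). First I would record, from the previous claim, that the leaves of $\mathcal{F}_C$ are spanned by vectors of the form $X_a$ together with the projection to $C$ of $\theta^\omega$; since on the product $M = S^1 \times C$ the s-Lee field $\theta^\omega$ is the $\omega$-dual of the angular form, I expect $\theta^\omega$ to coincide (up to the trivial $S^1$-direction being killed) with the Reeb field $R$ of $\alpha$. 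So the vertical directions for the quotient are generated by $X_a$ and $R$.

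The key computation is then to check $i_v\alpha = 0$ and $\mathcal{L}_v\alpha = 0$ for these generators. For $v = X_a$ with $a \in \mathfrak{g}_\xi$: since we are on the level set $\mu_C^{-1}(-\xi)$, we have $\alpha(X_a) = \mu_C(x)(a) = -\xi(a)$, so the genuinely vanishing combination is not $i_{X_a}\alpha$ alone but the foliation vector as described in Theorem B, namely $(X_a)_x - \xi(a)R_x$ (recall $\alpha(R)=1$), on which $i_v\alpha = -\xi(a) - \xi(a)(-1)\cdot\ldots$ — here I would carefully match conventions so that $\alpha$ indeed annihilates the correct leaf directions, which is exactly the reason the foliation $\mathcal{F}_C$ in Theorem B is written with the Reeb correction term. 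For invariance, $\mathcal{L}_{X_a}\alpha = 0$ holds because $\alpha$ is $G$-invariant, and $\mathcal{L}_R\alpha = i_R\,d\alpha + d(i_R\alpha) = 0$ is the defining property of the Reeb field. Thus $\alpha$ is basic for the submersion $\pi_C$, giving a well-defined one-form $\alpha_\xi$ with $\pi_C^*\alpha_\xi = \alpha$.

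Finally I would verify that $\alpha_\xi$ is a genuine contact form, i.e. $\alpha_\xi \wedge (d\alpha_\xi)^{k_\xi} \neq 0$ where $2k_\xi+1 = \dim C_\xi$. The cleanest route is to reduce this to the LCS statement already proven: by Proposition \ref{prop:lcsCont}, $\alpha_\xi$ is contact on $C_\xi$ if and only if $d_{\theta_\xi}\alpha_\xi$ is a nondegenerate (LCS) form on $S^1 \times C_\xi = N_\xi$. But Theorem \ref{th:LCSred} has already established that $N_\xi$ carries the nondegenerate LCS form $\omega_\xi$ with $\pi^*\omega_\xi = \omega_{|\mu^{-1}(\xi)} = (d_\theta\alpha)_{|\mu^{-1}(\xi)}$, and since $\pi^*(d_{\theta_\xi}\alpha_\xi) = d_\theta(\pi_C^*\alpha_\xi) = d_\theta\alpha = \omega$ on the level set, one identifies $\omega_\xi = d_{\theta_\xi}\alpha_\xi$ by injectivity of $\pi^*$ on basic forms. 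Hence nondegeneracy of $\omega_\xi$ forces $\alpha_\xi$ to be contact. The main obstacle I anticipate is not the nondegeneracy—which comes for free from the already-proven LCS reduction—but rather the bookkeeping of signs and conventions needed to confirm that $\alpha$ exactly annihilates the leaves of $\mathcal{F}_C$ (the Reeb-corrected directions), since that is precisely where the value $\xi$ enters and where an incorrect convention would break the descent of $\alpha$.
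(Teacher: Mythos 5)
Your proposal follows the paper's own two-step route: first show that $\alpha$ is basic for the quotient map (it annihilates the leaves of $\mathcal{F}_C$ and is invariant along them), then identify $d_{\theta_\xi}\alpha_\xi$ with the reduced LCS form $\omega_\xi$ of \ref{th:LCSred} and invoke \ref{prop:lcsCont} to conclude nondegeneracy; your final step is essentially verbatim the paper's argument. The one place where you leave a genuine hole is exactly the point on which your own sign-bookkeeping worry hinges: the identification of the s-Lee field $\theta^\omega$ with the Reeb field $R$. You only say you ``expect'' this, and your computation of $i_v\alpha$ trails off in an ellipsis; but the foliation coming from the LCS reduction is defined through $\theta^\omega$ (namely $v = X_a + \xi(a)\theta^\omega$), not through $R$, so without knowing $\theta^\omega = R$ (in particular $\alpha(\theta^\omega)=1$ and $i_{\theta^\omega}d\alpha=0$) neither the vanishing $i_v\alpha = 0$ nor the relevance of $\mathcal{L}_R\alpha = 0$ to the actual leaves is justified. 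The paper spends most of its first step proving precisely this: it shows $\theta^\omega\in(\Ker\,\theta)^\omega=\Ker\, d\alpha\restrict{C}$, then that $\alpha(\theta^\omega)$ satisfies $x^2-x=0$ and is nonzero, hence equals $1$.

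The gap is easily filled, and in a way that confirms your expectation directly: since $R$ is tangent to $C$ we have $\theta(R)=0$, and by the Reeb properties $\alpha(R)=1$, $i_R d\alpha =0$, so
\[
i_R\omega \;=\; i_R\bigl(d\alpha - \theta\wedge\alpha\bigr) \;=\; i_R d\alpha - \theta(R)\,\alpha + \alpha(R)\,\theta \;=\; \theta,
\]
whence $R=\theta^\omega$ by nondegeneracy of $\omega$. With this in hand the conventions sort themselves out in either normalization: in the paper's, the LCS reduction at $\xi$ has level set $S^1\times\mu_C^{-1}(-\xi)$ and leaves spanned by $X_a+\xi(a)\theta^\omega$, so $\alpha(v)=\mu_C(x)(a)+\xi(a)\alpha(\theta^\omega)=-\xi(a)+\xi(a)=0$, which is the same statement as Theorem B's foliation $X_a-\xi(a)R$ on $\mu_C^{-1}(\xi)$. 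Finally, your invariance argument ($\mathcal{L}_v\alpha=\mathcal{L}_{X_a}\alpha-\xi(a)\mathcal{L}_R\alpha=0$ by $G$-invariance and the Reeb property) is a correct, slightly more elementary alternative to the paper's, which instead uses the twisted Cartan formula $\mathcal{L}_v\alpha=\mathcal{L}^\theta_v\alpha=i_v\omega+d_\theta(\alpha(v))=0$, exploiting that $v$ lies in $(T\mu^{-1}(\xi))^\omega$ and that $\alpha(v)=0$ was already established.
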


\begin{proof}
	
{\bf Step 1. $\alpha$ descends to $C_\xi$.}

We need to prove that, for a $v \in \mathcal{F}_C$, we have $\alpha(v) = 0$ and $\mathcal{L}_v \alpha = 0$.

In a point $(t, x) \in \mu^{-1}(\xi) = S^1 \times \mu_C^{-1}(-\xi)$, 
\begin{equation}
\begin{split}
\label{eq:contactConcluzie}
\alpha(v) &= \alpha(X_a + \xi(a) \theta^\omega) = \alpha(X_a) + \xi(a) \alpha(\theta^\omega)\\
& = \mu_C(x)(a) + \xi(a) \alpha (\theta^\omega)  = -\xi(a) + \xi(a) \alpha(\theta^\omega).
\end{split}
\end{equation}
Remember that, since $(C, \alpha)$ is a contact manifold, we have a direct sum
\[
T(S^1 \times C) = T S^1 \oplus \Ker\, d\alpha\restrict{C} \oplus \Ker\, \alpha\restrict{C}, 
\]
where the first two terms are $1$-dimensional, the latter being generated by the Reeb field of the contact manifold $C$. Moreover
\begin{equation}
\label{eq:sumaContactNenula}
\alpha(w) = 0 \ \ \text{if and only if}\ \  w = 0,\  \text{ for any } w \in \Ker\, d\alpha\restrict{C}.
\end{equation}

First, since $\Ker\, \theta = TC = \Ker\, d\alpha\restrict{C} \oplus \Ker\, \alpha\restrict{C}$, we obtain $\theta^\omega \in (\Ker\, \theta)^\omega$. But
\begin{equation}\label{eq:ker}
(\Ker\, \theta)^\omega = \Ker\, d\alpha\restrict{C}
\end{equation}
 Indeed, for a $u \in \Ker\, d\alpha\restrict{C}$ and a $w \in \Ker\, \theta$, we have
\[
\omega (u, w) = (d_\theta \alpha) (u, w) = d \alpha (u, w) - (\theta \wedge \alpha) (u, w) = 0,
\]
hence $\Ker\, d\alpha\restrict{C} \subseteq (\Ker\, \theta)^\omega$, and  the equality follows since they are both $1$-dimensional. 

Coming back, we have
\begin{equation}\label{reeb}
\theta^\omega \in (\Ker\, \theta)^\omega = \Ker\, d\alpha\restrict{C} \subset \Ker\, d\alpha.
\end{equation}

Now,
\begin{align*}
\alpha(\theta^\omega) &= \omega(\alpha^\omega, \theta^\omega) = (d_\theta \alpha) (\alpha^\omega, \theta^\omega)  \\
&= d \alpha (\alpha^\omega, \theta^\omega) - (\theta \wedge \alpha) (\alpha^\omega, \theta^\omega)  \\
&= - (\theta \wedge \alpha) (\alpha^\omega, \theta^\omega)  = -\theta(\alpha^\omega)\alpha(\theta^\omega) \\ 
&= (\alpha(\theta^\omega))^2.
\end{align*}
So $\alpha(\theta^\omega)$ satisfies the equation
\[
x^2 - x = 0.
\]
As $\theta^\omega$ has no zeros, \eqref{eq:sumaContactNenula} implies that  $x \neq 0$, and hence
\[
\alpha(\theta^\omega) = 1.
\]
This together with \eqref{reeb} implies that $\theta^\omega$ is the Reeb field.

Coming back to (\ref{eq:contactConcluzie}), we find
\begin{equation}
\label{eq:redCont1}
\alpha(v) = -\xi(a) + \xi(a) \alpha(\theta^\omega) = -\xi(a) + \xi(a) = 0.
\end{equation}

We can now also prove that $\alpha$ is  constant along the flow of a $v \in \mathcal{F}_C$ \ie $\mathcal{L}_v \alpha = 0$.
Indeed, since $\theta(v) = 0$, we have
\begin{equation}
\label{eq:redCont2}
\mathcal{L}_v \alpha = \mathcal{L}^\theta_v \alpha = i_v (d_\theta \alpha) + d_\theta (\alpha(v)) = i_v  \omega\restrict{C} = 0.
\end{equation}

Finally, put (\ref{eq:redCont1}) and (\ref{eq:redCont2}) together: 
\[
\mathcal{L}_v \alpha = \alpha(v) = 0 \text{ on } C \text{ for any } v \in \mathcal{F}_C,
\]
so we can define a $1$-form $\alpha_\xi$ on $C_\xi$ such that $\pi^* \alpha_\xi = \alpha$.

\medskip

{\bf Step 2. $\alpha_\xi$ is a contact form.}

One way to see this is to exploit the relation with the reduced LCS manifold: since $\omega = d_\theta \alpha$, we have
\[
\pi^* (d_{\theta_\xi} \alpha_\xi) = d_{\pi^* {\theta_\xi}} \pi^* \alpha_\xi = d_\theta \alpha = \omega = \pi^* \omega_\xi,
\]
so $d_{\theta_\xi} \alpha_\xi = \omega_\xi$, the very LCS form given by the reduction of \ref{th:LCSred}. This means that $\alpha_\xi$ is a contact form on the manifold $C_\xi$ (see \ref{prop:lcsCont}).
\end{proof}

\begin{remark}There is at least one other way to check equality \eqref{eq:redCont1} which involves less calculations, but doesn't come with the benefit of finding the Reeb field along the way. Notice that $(\Ker \alpha)^\omega = TS^1$. Indeed, for a $u \in TS^1 \subset \Ker\ \alpha$ and a $w \in \Ker\ \alpha$, we have
	\[
	\omega(u, w) = d\alpha(u, w) - \theta \wedge \alpha(u, w) = 0,
	\]
	and the equality follows since both spaces are $1$-dimensional. Then we find that
	\[
	TS^1 \subset T \mu^{-1}(\xi) \ \ \text{implies}\ \  T \mu^{-1}(\xi)^\omega \subset (TS^1)^\omega = \Ker\ \alpha.
	\]
\end{remark}

We gather all we have just proven in the following: 

\begin{theorem}
\label{thm:redcont}
Let $(C, \alpha)$ be a connected contact manifold and $G$ a connected Lie group acting on $C$ and preserving the contact form. Denote by $R$ the Reeb field of $C$. 

Let $\mu_C$ be the momentum mapping and $\xi \in \mathfrak{g}^*$ a regular value. Let $\mathcal{F}_C$ be the foliation
\[
(\mathcal{F}_C)_x = \{ v \in T_x \mu^{-1}(\xi) \ | \ v = (X_a)_x - \xi(a) R_x \text{ for some } a \in \mathfrak{g} \},
\]
where $X_a$ is the fundamental vector field corresponding to $a \in \mathfrak{g}$.

If $C_\xi := \faktor{\mu_C^{-1}(\xi)}{\mathcal{F}_C}$ is a smooth manifold and $\pi : \mu_C^{-1}(\xi) \to C_\xi$ is a submersion, then $C_\xi$ has a natural contact structure such that the contact form $\alpha_\xi$ satisfies
\[
\pi^* \alpha_\xi = \alpha.
\]
Moreover, $(S^1 \times C_\xi, d_\theta \alpha_\xi, \theta)$ is the LCS reduction of $S^1 \times C$ with respect to the regular value $-\xi$.
\end{theorem}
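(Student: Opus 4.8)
The plan is to obtain the contact reduction as a corollary of the LCS reduction (Theorem~\ref{th:LCSred}) applied to the product manifold $S^1 \times C$. First I would put on $M = S^1 \times C$ the LCS form $\omega = d_\theta \alpha$, where $\theta$ is the (pullback of the) angular form on $S^1$; by Proposition~\ref{prop:lcsCont} this is genuinely LCS precisely because $\alpha$ is contact. Extending the $G$-action trivially along $S^1$, the invariance of $\alpha$ and of $\theta$ makes the action preserve $\omega$, so we land in the first (form-preserving) case of Theorem~\ref{th:LCSred} with $f = 0$; a direct computation using $\mathcal{L}^\theta_{X_a}\alpha = 0$ and the twisted Cartan formula~\eqref{eq:cartan} shows the action is twisted Hamiltonian with momentum mapping $\mu(t,x) = -\mu_C(x)$, as recorded in~\eqref{eq:momMappingRelation}. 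Hence a regular value $\xi$ of $\mu_C$ corresponds to the regular value $-\xi$ of $\mu$, and $\mu^{-1}(-\xi) = S^1 \times \mu_C^{-1}(\xi)$.

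The pivotal step, which I expect to be the main obstacle, is to identify the anti-Lee field $\theta^\omega$ with the Reeb field $R$ of $C$. One shows first that $\theta^\omega$ lies in $(\Ker \theta)^\omega = \Ker\, d\alpha\restrict{C}$, a one-dimensional space spanned by $R$; then the quadratic identity $\alpha(\theta^\omega) = (\alpha(\theta^\omega))^2$, obtained by expanding $\omega = d\alpha - \theta \wedge \alpha$, together with the fact that $\theta^\omega$ is nowhere vanishing, forces $\alpha(\theta^\omega) = 1$, so $\theta^\omega = R$. With this identification the LCS foliation $\mathcal{F} = T\mu^{-1}(-\xi) \cap (T\mu^{-1}(-\xi))^\omega$ has leaves spanned by the vectors $X_a + (-\xi)(a)\theta^\omega = X_a - \xi(a) R$, which is exactly the prescribed $\mathcal{F}_C$. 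Since each such vector is tangent to the $C$-factor and independent of the $S^1$-coordinate, $\mathcal{F}$ descends from a foliation on $C$, and the LCS quotient is a product $N_{-\xi} = S^1 \times C_\xi$ with $C_\xi = \faktor{\mu_C^{-1}(\xi)}{\mathcal{F}_C}$.

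It then remains to descend $\alpha$ and check the contact condition. For $v = X_a - \xi(a)R \in \mathcal{F}_C$ one computes $\alpha(v) = \alpha(X_a) - \xi(a)\alpha(R) = \xi(a) - \xi(a) = 0$ (using $\alpha(X_a) = \mu_C(x)(a) = \xi(a)$ on the level set) and, since $\theta(v) = 0$, one gets $\mathcal{L}_v \alpha = i_v \omega\restrict{C} = 0$ by~\eqref{eq:cartan}; thus $\alpha$ descends to a one-form $\alpha_\xi$ on $C_\xi$ with $\pi^*\alpha_\xi = \alpha$. Applying Theorem~\ref{th:LCSred} gives the reduced LCS form $\omega_{-\xi}$ with $\pi^*\omega_{-\xi} = \omega\restrict{\mu^{-1}(-\xi)}$ and Lee form the angular form $\theta$; pulling back, $\pi^*(d_\theta \alpha_\xi) = d_\theta \alpha = \omega = \pi^* \omega_{-\xi}$ yields $d_\theta \alpha_\xi = \omega_{-\xi}$. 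By the converse direction of Proposition~\ref{prop:lcsCont} this is exactly the assertion that $\alpha_\xi$ is a contact form on $C_\xi$, and at the same time it exhibits $(S^1 \times C_\xi, d_\theta \alpha_\xi, \theta)$ as the LCS reduction of $S^1 \times C$ at $-\xi$, giving the ``moreover'' clause.
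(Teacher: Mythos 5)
Your proposal is correct and follows essentially the same route as the paper: transferring the problem to $S^1 \times C$ with $\omega = d_\theta\alpha$, identifying the anti-Lee field $\theta^\omega$ with the Reeb field via the quadratic identity $\alpha(\theta^\omega) = (\alpha(\theta^\omega))^2$, recognizing the LCS characteristic foliation as $\mathcal{F}_C$ so the quotient splits as $S^1 \times C_\xi$, descending $\alpha$, and invoking Proposition~\ref{prop:lcsCont} to get contactness. The only difference is cosmetic ordering (you identify $\theta^\omega = R$ before descending $\alpha$, the paper does it within that step), so there is nothing to add.
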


\begin{remark}
\label{rem:albert}
The manifold obtained through \ref{thm:redcont} is the same as the one obtained through Albert's reduction - see \cite{al} and \cite[Appendix]{wi}.
\end{remark}

\begin{remark}
\label{rem:aceeasiRed0}
One can see that this reduction is compatible with the already existing one for regular value $0$ (\ie the one described in \cite{ge}). This is because the LCS reduction for regular value $0$ is just the quotient \textit{via} the group action (see \ref{rem:redzero}).
\end{remark}

\subsection{Sasakian reduction}
\label{subsec:sasaki}

We now show that this reduction method is compatible with the existence of a Sasaki structure on the contact manifold, hence allowing for the definition of a Sasakian reduction for non-zero regular values compatible with the LCK reduction of Section \ref{sectionLCK}. A similar result was proved in \cite{do2} for Willet's \cite{wi} version of contact reduction.

\begin{definition}
\label{def:Sasaki}
A contact manifold $(S, \alpha)$ endowed with a Riemannian metric $g_C$ is called \textit{Sasaki} if $S \times \R$ is \K \ with the natural metric and symplectic form:
\begin{align*}
g &= e^t (g_C + dt^2), \\
\omega &= d(e^t \alpha).
\end{align*}
\end{definition}

\begin{remark}
One can also give an intrinsic definition for a Sasaki manifold - see \eg \cite{bl}.
\end{remark}

\begin{remark}
\label{rmk:SasakiLCK}
Using the above definition of Sasaki manifolds and the property of LCK manifolds to be covered by \K \ manifolds, one can easily prove the following compatibility between the two:

A contact manifold with a Riemannian metric $(C, \alpha, g)$ is Sasaki if and only if $S^1 \times C$ is LCK with the product metric and LCS form:
\begin{align*}
g &= g_C + dt^2, \\
\omega &= d_\theta \alpha.
\end{align*}

From \ref{prop:equivVaisman} we see that, in this case, $S^1 \times C$ is in fact Vaisman.
\end{remark}

\hfill

We use this characterization to give the proof of Sasakian reduction:

\begin{theorem}
\label{thm:redSasaki}
In the conditions of \ref{thm:redcont}, if $C$ is Sasaki, then the reduced space $C_\xi$ carries a natural Sasaki structure.

\begin{proof}
In view of \ref{rmk:SasakiLCK} and \ref{thm:LCKred}, we only need to show that the metric $g_\xi$ on $S^1 \times C_\xi$ is of product type \ie $g_\xi = g_{C_\xi} + dt^2$. But this follows from the way it is constructed: recall that $\pi^* g_\xi = g_{|S^1 \times \mu^{-1}(-\xi)}$ on the horizontal space of the submersion.
\end{proof}
\end{theorem}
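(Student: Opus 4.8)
The plan is to transfer the statement into the LCK setting via the product characterization of Sasaki manifolds and then invoke the LCK reduction already established. By Remark \ref{rmk:SasakiLCK}, the hypothesis that $C$ is Sasaki means precisely that $M = S^1 \times C$, equipped with the product metric $g = g_C + dt^2$ and the LCS form $\omega = d_\theta \alpha$, is an LCK manifold, and moreover it is automatically Vaisman. So the first thing I would record is this reinterpretation, together with the fact (from Subsection \ref{subsec:contactred}) that the trivially extended $G$-action on $M$ preserves $\omega$ and is holomorphic, and that by \ref{thm:redcont} the LCS reduction $N_{-\xi}$ is exactly $S^1 \times C_\xi$.

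Next I would check that the hypotheses of the LCK reduction Theorem \ref{thm:LCKred} hold for $M$ at the regular value $-\xi$. The first bullet is satisfied since the action preserves $\omega$; the s-Lee field $\theta^\omega$ is holomorphic because $M$ is Vaisman (as noted after Proposition \ref{prop:equivVaisman}, the Lee field of a Vaisman manifold is holomorphic, and $\theta^\omega = J\theta^\sharp$ is then holomorphic as well). Granting the remaining structural hypothesis $G_\xi = G$, Theorem \ref{thm:LCKred} then endows $N_{-\xi} = S^1 \times C_\xi$ with an LCK structure $(J_{-\xi}, g_{-\xi}, \omega_{-\xi}, \theta_{-\xi})$; and since the action preserves $\omega$, one may take $f = 0$, so that $\pi^* \omega_{-\xi} = \omega|_{\mu^{-1}(-\xi)}$ and, correspondingly, $\pi^* g_{-\xi} = g|_{\mu^{-1}(-\xi)}$ on the horizontal distribution of the submersion.

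To conclude that $C_\xi$ is itself Sasaki, I would apply the reverse direction of Remark \ref{rmk:SasakiLCK}: it suffices to verify that the reduced LCK metric $g_{-\xi}$ on $S^1 \times C_\xi$ is again of product type, namely $g_{-\xi} = g_{C_\xi} + dt^2$ for a well-defined metric $g_{C_\xi}$ on $C_\xi$. For this I would use the structural facts already proved inside \ref{thm:redcont}: the characteristic foliation $\mathcal{F}$ is tangent to the $C$-factor (its generators $X_a + \xi(a)\theta^\omega$ all lie in $TC$, since $\theta^\omega$ was identified there with the Reeb field), so the $S^1$-direction $\partial_t$ is horizontal for $\pi$ and projects isometrically. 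Because $g = g_C + dt^2$ splits $\partial_t$ off orthogonally and the foliation acts only on the $C$-part, the identity $\pi^* g_{-\xi} = g|_{\mu^{-1}(-\xi)}$ forces $g_{-\xi}$ to split accordingly; \ref{rmk:SasakiLCK} then delivers the Sasaki structure on $C_\xi$. Equivalently, noting that $\partial_t = \theta^\sharp$ is tangent to $\mu^{-1}(-\xi) = S^1 \times \mu_C^{-1}(\xi)$ and that the product action is isometric, one lands in the situation of Corollary \ref{lem:VaismanRed}, so the reduced space is Vaisman, reconfirming the conclusion.

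The main obstacle I anticipate is precisely this last step: verifying cleanly that the reduced metric keeps its product form. The point is that the orthogonal splitting $TM = \R\,\partial_t \oplus TC$ must be compatible with the horizontal/vertical decomposition of the Riemannian submersion, so that $\partial_t$ is horizontal and orthogonal to the horizontal lift of $TC_\xi$ and no cross terms of the shape $dt\cdot(\text{form on } C_\xi)$ survive downstairs. Everything else is bookkeeping resting on \ref{thm:LCKred} and \ref{rmk:SasakiLCK}.
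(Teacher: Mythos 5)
Your proposal is correct and takes essentially the same route as the paper: reinterpret the Sasaki hypothesis via \ref{rmk:SasakiLCK} as a Vaisman (product-metric) LCK structure on $S^1 \times C$, apply the LCK reduction \ref{thm:LCKred} at the regular value $-\xi$, and conclude by checking that the reduced metric stays of product type using $\pi^* g_\xi = g_{|S^1 \times \mu^{-1}(-\xi)}$ on the horizontal space together with the fact that the characteristic foliation is tangent to the $C$-factor. Your write-up is in fact more explicit than the paper's (which leaves the hypotheses of \ref{thm:LCKred}, such as $G_\xi = G$ and holomorphicity of the s-Lee field, implicit), and your alternative closing argument via \ref{lem:VaismanRed} is exactly the observation the paper itself makes in the remark following the theorem.
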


\bigskip

Note that we have used the fact that $S^1 \times C$ is Vaisman in order to apply \ref{thm:LCKred}; in turn, since the LCK quotient is also a product of $S^1$ with a Sasakian manifold, it follows that it is also Vaisman. Another way to see this is by applying \ref{lem:VaismanRed}, as Vaisman manifolds of type $S^1 \times S$ with $S$ Sasaki satisfy the additional condition imposed.

\begin{remark}
For $\xi=0$, one recovers the reduction of Sasakian manifolds as introduced in \cite{go} (see also \cite{bs}).
\end{remark}

\section{Examples of LCS reduction}
\label{sectionExamples}

\subsection{Reducing the global conformal structure of $\mathbb{C}^n$}
\label{ssExCn}

Consider the standard \K \ form on $\C^n$, 
\[
\omega_0 = -i \sum_{j=1}^{n} dz_j \wedge d\overline{z}_j = -2\sum_{j=1}^{n} dx_j \wedge dy_j,
\]
and the standard action of $S^1$ on $\C^n$, 
\[
e^{it} \cdot  z = e^{it} \cdot (z_1, ..., z_n) = (e^{it} \cdot z_1, ..., e^{it} \cdot z_n).
\]

We denote by $Y_f$ the Hamiltonian vector field of a smooth function $f$ with regard to the standard metric $\omega_0$.

\begin{proposition}
	This action is Hamiltonian with momentum mapping $\mu: \C^n \to \R, \mu(z) = \| z \|^2.$
	
	\begin{proof}
		Notice that $\omega_0 = d \eta$, where $\eta = -\sum_{i=1}^{n} (x_i  dy_i - y_i dx_i)$ is $S^1$-invariant. Thus, keeping the notations of Section \ref{sectionLcsRed},
		\[
		0 = \mathcal{L}_{X_1} \eta = i_{X_1} \omega_0 + d \eta(X_1), 
		\]
		where $X_1$ is the fundamental vector field corresponding to $1 \in \R$ as the dual of the Lie algebra of $S^1$, $X_1 = \frac{d}{dt} (e^{it} \cdot z) = \sum_{j=1}^n (x_i \frac{\partial}{\partial y_i} - y_i \frac{\partial}{\partial x_i})$, so the action is Hamiltonian and
		\[
		\mu(z) = \rho_1(z) = -\eta(X_1)(z) = \| z \|^2. 
		\]
	\end{proof}
\end{proposition}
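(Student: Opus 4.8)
The plan is to notice that on $\C^n$ the situation is not genuinely ``twisted'': the Lee form vanishes because $\omega_0$ is exact, so the twisted Hamiltonian condition $d_\theta\rho_1 = i_{X_1}\omega_0$ collapses to the classical requirement $d\rho_1 = i_{X_1}\omega_0$. The entire task therefore reduces to producing a single function $\rho_1$ whose differential equals the contraction of $\omega_0$ with the rotation generator. The cleanest route is to exploit that $\omega_0$ admits a rotation-invariant primitive, so that Cartan's formula supplies the Hamiltonian for free.

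First I would compute the infinitesimal generator explicitly: differentiating $t\mapsto e^{it}\cdot z$ at $t=0$ gives $X_1 = \sum_{j=1}^n \left(x_j \pa{y_j} - y_j \pa{x_j}\right)$, the standard rotational field. Next I would exhibit the candidate primitive $\eta = -\sum_{j=1}^n (x_j\,dy_j - y_j\,dx_j)$ and verify by a one-line computation that $d\eta = -2\sum_j dx_j\wedge dy_j = \omega_0$. The structurally important point, which I would establish next, is that $\eta$ is $S^1$-invariant: in polar coordinates $\eta = -\sum_j r_j^2\,d\phi_j$, and the diagonal rotation $\phi_j\mapsto\phi_j+t$ fixes each $r_j$ and each $d\phi_j$, so $\mathcal{L}_{X_1}\eta = 0$; equivalently this can be checked by a direct coordinate computation of the Lie derivative.

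With invariance established, Cartan's magic formula finishes the argument. Since $\mathcal{L}_{X_1}\eta = i_{X_1}d\eta + d(i_{X_1}\eta) = i_{X_1}\omega_0 + d(\eta(X_1))$ vanishes, we obtain $i_{X_1}\omega_0 = d(-\eta(X_1))$, so the action is Hamiltonian with $\rho_1 = -\eta(X_1)$. A direct contraction, using $dy_j(X_1)=x_j$ and $dx_j(X_1)=-y_j$, gives $\eta(X_1) = -\sum_j(x_j^2+y_j^2) = -\| z \|^2$, whence $\mu(z) = \rho_1(z) = \| z \|^2$, as claimed.

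There is no real obstacle here; the statement is a routine verification. The only points that genuinely demand care are bookkeeping ones: matching the sign convention $d_\theta\rho_a = i_{X_a}\omega$ fixed in Section \ref{sectionLcsRed} so that the momentum emerges with a plus sign as $+\| z \|^2$ rather than its negative, and justifying (rather than merely asserting) the invariance of $\eta$, since it is precisely this invariance that turns Cartan's formula into an explicit Hamiltonian.
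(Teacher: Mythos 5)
Your proposal is correct and follows essentially the same route as the paper: exhibit the $S^1$-invariant primitive $\eta = -\sum_j (x_j\,dy_j - y_j\,dx_j)$ of $\omega_0$, apply Cartan's formula to get $i_{X_1}\omega_0 = d(-\eta(X_1))$, and compute $\rho_1 = -\eta(X_1) = \|z\|^2$. The extra details you supply (the observation that $\theta = 0$ reduces the twisted condition to the classical one, and the polar-coordinate verification of invariance) are merely explicit versions of what the paper leaves implicit.
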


\begin{remark}
The symplectic reduction of $(\C^n, \omega_0)$ with respect to this $S^1$-action at regular value $\epsilon > 0$ is $S^{2n-1} / S^1 = \C \mathbb{P}^{n-1}$ with symplectic (in fact \K) form $\epsilon \cdot \omega_{FS}$, where $\omega_{FS}$ is the Fubini-Study metric.

Furthermore, by considering the action of $S^1$ on $\C \times \C^n$,
\[
e^{it} \cdot (w, z) = (e^{-it} w, e^{it}z),
\]
which is still Hamiltonian with momentum mapping $\mu(w, z) = \| z \|^2 - |w|^2$, the reduction at $\epsilon > 0$ is the blow-up of $\C^n$ at the origin (see \cite{ler}).
\end{remark}

These quotient manifolds are easy to understand because the momentum mappings have nice level sets. However, the LCS reduction techniques we developed allow us to greatly expand the range of codimension-$1$ submanifolds of $\C^n$ that can be factored to \textit{produce symplectic manifolds}:

\begin{proposition}
\label{thm:ExSymCn}
Let $f: \C^n \to \R$ be a smooth function and $\xi > 0$ such that 
\begin{equation}
\label{eq:ExSymCn1}
e^{f(z)} \| z \|^2 = \xi \implies d_xf \neq -2\frac{z}{\| z \|^2}, \text{ for all } z \in \C^n.
\end{equation}

If the level set $\mu_f^{-1}(\xi) = \{ z \in \C^n \ | \ e^{f(z)} \| z \|^2 = a \}$ factored along the foliation
\[
\mathcal{F}_z = \langle X_1 + \xi e^{-f(z)}Y_f \rangle
\]
is a smooth manifold $N_\xi$, then $N_\xi$ has a unique symplectic structure $\omega_\xi$ satisfying $\pi^* \omega_\xi = \omega_0$.

\begin{proof}
This is a consequence of \ref{th:LCSred}:

Consider the globally conformally symplectic form $\omega_f = e^f \omega_0$ on $\C^n$. Then the action of $S^1$ is twisted Hamiltonian with momentum mapping $\mu_f = e^f \| z \|^2$. Condition $\eqref{eq:ExSymCn1}$ just means that $\xi$ is a regular value of $\mu_f$.

In this particular case, the characteristic foliation becomes
\begin{equation*}
\begin{split}
T\mu_f^{-1}(\xi) \cap (T\mu_f^{-1}(\xi))^{\omega_f} &= (T\mu_f^{-1}(\xi))^{\omega_f} \\
&= \{ X_a + (\xi \cdot a) \theta^{\omega_f} \ | \ a \in \mathbb{R} \} \\
&= \langle X_1 + \xi e^{-f}Y_f \rangle,
\end{split}
\end{equation*}
since $\theta = df$ and $(df)^{\omega_f} = e^{-f} (df)^{\omega_0} = e^{-f} Y_f$.

The additional hypotheses of the theorem are met: $\theta = df$ is obviously exact and the bilinear forms $\xi \wedge \theta_z(X_\cdot)$ are trivially zero, since they are defined on a $1$-dimensional space. It follows that the LCS form $\omega_\xi$ on $N_\xi$ is in fact symplectic, since it satisfies $\pi^* \omega_\xi = e^{-f} e^f \omega_0 = \omega_0$.
\end{proof}
\end{proposition}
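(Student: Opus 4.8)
The plan is to exhibit this as an instance of \ref{th:LCSred} applied to a globally conformally symplectic representative, and then to observe that exactness of the Lee form lets one normalize the reduced form to be genuinely symplectic. First I would set $\omega_f = e^f\omega_0$. Since $\omega_0$ is closed, $d\omega_f = df\wedge\omega_f$, so $\omega_f$ is an LCS form with exact Lee form $\theta = df$. The $S^1$-action is Hamiltonian for $\omega_0$ with momentum map $\|z\|^2$ by the preceding proposition, so by \ref{rem:changemu} it is twisted Hamiltonian for $(\omega_f,df)$ with momentum map $\mu_f = e^f\|z\|^2$. A short computation gives $d\mu_f = e^f(\|z\|^2\,df + d\|z\|^2)$, which vanishes precisely when $d_zf = -d\|z\|^2/\|z\|^2 = -2z/\|z\|^2$; thus hypothesis \eqref{eq:ExSymCn1} says exactly that $\xi$ is a regular value of $\mu_f$.

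Next I would identify the characteristic foliation. As $S^1$ is abelian its coadjoint action is trivial, so $G_\xi = G$ and \ref{lem:dimconst} gives $T\mu_f^{-1}(\xi)\cap(T\mu_f^{-1}(\xi))^{\omega_f} = (T\mu_f^{-1}(\xi))^{\omega_f}$. Using \eqref{eq:ortho} with $\mathfrak g = \R$, together with the conformal scaling of the s-Lee field $(df)^{\omega_f} = e^{-f}(df)^{\omega_0} = e^{-f}Y_f$, this is the line spanned by $V = X_1 + \xi e^{-f}Y_f$, which is the asserted $\mathcal F_z$. The hypotheses of the second alternative of \ref{th:LCSred} are then immediate: $S^1$ is compact; the form $\xi\wedge\theta_z(X_\cdot)$ is alternating on the one-dimensional $\mathfrak g = \R$, hence identically zero; and $\theta = df$ is globally exact, so a fortiori exact on $\mathcal F$. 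Therefore \ref{th:LCSred} produces an LCS form $\omega_\xi$ on $N_\xi$ with $\pi^*\omega_\xi = e^h\omega_f|_{\mu_f^{-1}(\xi)}$ for an admissible conformal factor $h$.

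The step I expect to be the crux is upgrading $\omega_\xi$ from LCS to symplectic, with the clean normalization $\pi^*\omega_\xi = \omega_0$. Here I would exploit that $\theta = df$ is exact on all of $\C^n$, not merely on $\mathcal F$: this lets me take the conformal factor to be $h = -f$, so that $\pi^*\omega_\xi = e^{-f}\omega_f = \omega_0$ on the level set. To see that this is legitimate I would check directly that $\omega_0|_{\mu_f^{-1}(\xi)}$ is basic for $\mathcal F$. Horizontality is automatic, since $\omega_0(V,\cdot) = e^{-f}\omega_f(V,\cdot)$ annihilates $T\mu_f^{-1}(\xi)$; and invariance follows from $i_V\omega_0 = d\|z\|^2 + \xi e^{-f}\,df$, whose exterior derivative vanishes because $d(e^{-f})\wedge df = 0$, so (with $d\omega_0 = 0$) $\mathcal L_V\omega_0 = 0$. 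Hence $\omega_0|_{\mu_f^{-1}(\xi)}$ descends to $\omega_\xi$, and since $\omega_0$ is closed and $\pi$ is a submersion, $\pi^*d\omega_\xi = d\pi^*\omega_\xi = 0$ forces $d\omega_\xi = 0$: the reduced form is symplectic. Uniqueness is immediate, as $\pi^*$ is injective on forms for a surjective submersion.
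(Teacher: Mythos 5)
Your proof is correct and takes essentially the same route as the paper: apply \ref{th:LCSred} to the conformal representative $\omega_f = e^f\omega_0$, use \eqref{eq:ortho} (with $G_\xi = G$ since $S^1$ is abelian) to identify the characteristic foliation as $\langle X_1 + \xi e^{-f}Y_f\rangle$, check the compactness/bilinear-form/exactness hypotheses, and conclude that the reduced form pulls back to $e^{-f}\omega_f = \omega_0$ and is therefore symplectic. Your additional direct verification that $\omega_0|_{\mu_f^{-1}(\xi)}$ is basic for $\mathcal F$ (i.e.\ that the conformal factor $e^{-f}$, not $e^{+f}$, is the admissible one) is a useful supplement, as it makes explicit the sign of the normalization that the paper's proof only asserts.
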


\subsection{Reducing Hopf manifolds}

We now look at the reduction of LCS Hopf manifolds \ie $S^1 \times S^{2n-1}$. 

The standard action of $S^1$ on $\C^n$ described in Subsection \ref{ssExCn} can be restricted to the standard $S^1$-action on the contact manifold $S^{2n-1} \subset \C^n$. It then follows (see \eg \cite[Example 2.7]{wi}) that the contact momentum mapping is just the restriction of $\mu$, so the contact sphere \textit{has constant momentum mapping} $\mu_C = 1$.

Coming to the Hopf manifold $H = (S^1 \times S^{2n-1}, \omega = d_\theta \alpha)$ with the $S^1$ action on the second factor, we obtain from \eqref{eq:momMappingRelation} that its momentum mapping is constant $\mu_0 \equiv -1$. This means that neither reduction results in  \ref{th:LCSred} or \ref{thm:redcont} are directly applicable, because the only interesting value $\xi = -1$ is not regular.

However, recall that \ref{th:LCSred} works even if the LCS form is not invariant under the group action, so we may replace $\omega$ by another form in its conformal class $\omega_f = e^f \omega$ with momentum mapping $\mu_f = e^f \cdot (-1) = -e^f$ (see \ref{rem:changemu}). This in turn gives a great flexibility in terms of the level set:

\begin{remark}
\label{rem:levset}
For any manifold $M$ and any closed hypersurface $N \subset M$, there exists an $f \in \mathcal{C}^\infty(M)$ having $0$ a regular value and $N = f^{-1}(0)$.

\begin{proof}
Indeed, we can take $f = (d(\cdot, N))^2$, where $d$ is the distance induced by the LCK metric on $H$ (or any other Riemannian metric).
\end{proof}
\end{remark}

\bigskip

Take $f \in \mathcal{C}^\infty(S^1 \times S^{2n-1})$ with $0$ a regular value (equivalently, $-1$ is a regular value for $\mu_f = -e^f$). Then the foliation that $\mu_f^{-1}(-1)$ must be quotiened to, according to \eqref{eq:ortho}, is
\[
\mathcal{F}_x = (T_x \mu^{-1}(-1))^{\omega_f} = \langle X_1 + \xi(1) (\theta + df)^{\omega_f}\rangle_\R = \langle X_1 -  (\theta + df)^\omega\rangle_\R,
\]
for all $x \in f^{-1}(0)$ (we took into account that $f = 0$ on $\mu_f^{-1}(-1)$ by definition). This can be further simplified: recall from Subsection \ref{subsec:contactred} that $\theta^\omega$ is the Reeb field of $S^{2n-1}$, $\theta^\omega = X_1 = R$, so
\begin{equation}
\label{eq:exHopfFoliatie}
\mathcal{F}_x = \langle Y_f\rangle,
\end{equation}
where, again, $Y_f$ is the Hamiltonian vector field of $f$ with regard to the canonical symplectic form $\omega$ on $H$.

We combine this fact with the LCS reduction \ref{th:LCSred} to obtain

\begin{proposition}
\label{thm:redHopf}
Let $f:( S^1 \times S^{2n-1}, \omega, \theta) \to \R$ be a smooth function with $0$ a regular value. Assume $\theta$ is exact along the foliation $\mathcal{F} = \langle Y_f\rangle$ on $f^{-1}(0)$.

If  $\faktor{f^{-1}(0)}{\mathcal{F}}$ is a smooth manifold $N$, then $N$ has a unique LCS structure $\omega_N$ satisfying $\pi^* \omega_N = \omega$.
\end{proposition}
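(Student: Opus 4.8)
The plan is to read this off Theorem \ref{th:LCSred} applied to the conformally rescaled form $\omega_f = e^f\omega$. By Remark \ref{rem:changemu} the $S^1$-action is still twisted Hamiltonian for $\omega_f$, now with momentum mapping $\mu_f = -e^f$. First I would translate the hypotheses: since $e^f>0$, the value $-1$ is regular for $\mu_f$ exactly when $0$ is regular for $f$, and $\mu_f^{-1}(-1) = f^{-1}(0)$. Thus the level set to be reduced is the prescribed hypersurface $f^{-1}(0)$, and the acting group $G = S^1$ is compact, so I am aiming at the second alternative in Theorem \ref{th:LCSred}.

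Next I would check the three conditions of that alternative. Compactness of $S^1$ is clear. The bilinear obstruction $\xi\wedge\theta_x(X_\cdot)$ is automatically zero because $\mathfrak{g}=\R$ is one-dimensional and hence $\Lambda^2\mathfrak{g}^*=0$; by Lemma \ref{lem:dimconst} this already forces $\mathcal{F}$ to have constant rank $\dim G_\xi = 1$, so that $\mathcal{F}=\langle Y_f\rangle$ is a genuine foliation, as identified in \eqref{eq:exHopfFoliatie} using $\theta^\omega = X_1$. For the exactness condition, note that along $\mathcal{F}$ one has $df(Y_f)=\omega(Y_f,Y_f)=0$, so the Lee form $\theta+df$ of $\omega_f$ restricts on $\mathcal{F}$ to $\theta|_{\mathcal{F}}$; the hypothesis that $\theta$ is exact along $\mathcal{F}$ therefore provides exactly the primitive required by the theorem.

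With all hypotheses verified, Theorem \ref{th:LCSred} yields an LCS form $\omega_N$ on $N=\faktor{f^{-1}(0)}{\mathcal{F}}$ with $\pi^*\omega_N = e^{h}\,\omega_f|_{f^{-1}(0)}$, where $h$ is the primitive furnished above. Here I would exploit the feature special to the Hopf setting that $f\equiv 0$ on $f^{-1}(0)$, so that $\omega_f|_{f^{-1}(0)}=\omega|_{f^{-1}(0)}$; normalizing the representative of the reduced conformal class then pins down $\omega_N$ by $\pi^*\omega_N=\omega$, in the same spirit as the cancellation carried out in the proof of Proposition \ref{thm:ExSymCn}. Uniqueness is formal: $\pi$ is a submersion, so $\pi^*$ is injective on forms and at most one $\omega_N$ can satisfy $\pi^*\omega_N=\omega$.

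The step I expect to need the most care is precisely this last normalization of the conformal factor: confirming that the primitive coming from the exactness of $\theta$ along $\mathcal{F}$, together with $f$ vanishing on the level set, produces $\omega$ on the nose rather than merely a form in its conformal class. The remaining ingredients — regularity, compactness, vanishing of the one-dimensional bilinear form, and the description of $\mathcal{F}$ — are either automatic or already in hand from the preceding discussion.
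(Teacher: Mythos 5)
Your proposal follows exactly the paper's route: the paper gives no separate proof of this proposition, but states it as the combination of the preceding discussion (pass to $\omega_f=e^f\omega$ with momentum map $\mu_f=-e^f$ by \ref{rem:changemu}, identify the level set $\mu_f^{-1}(-1)=f^{-1}(0)$, and identify the characteristic foliation as $\langle Y_f\rangle$ in \eqref{eq:exHopfFoliatie} using $\theta^\omega=X_1$) with Theorem \ref{th:LCSred}. Your hypothesis checks --- vanishing of the bilinear form because $\mathfrak{g}$ is one-dimensional, and $df(Y_f)=0$ so that the Lee form $\theta+df$ of $\omega_f$ restricts on $\mathcal{F}$ to $\theta|_{\mathcal{F}}$ --- are correct and in fact more explicit than what the paper records.

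However, the step you flag as delicate is a genuine gap, and it cannot be closed by the normalization you propose. Theorem \ref{th:LCSred} gives $\pi^*\omega_N=e^{h}\,\omega_f|_{f^{-1}(0)}=e^{h}\,\omega|_{f^{-1}(0)}$, where $h$ is the primitive of $\theta$ along $\mathcal{F}$. Replacing the representative $\omega_N$ by $e^{g}\omega_N$ with $g\in C^\infty(N)$ changes the pullback by $e^{g\circ\pi}$, a factor which is constant along the leaves; hence $e^{h}$ can be absorbed if and only if $h$ itself is constant along the leaves, i.e. $\theta(Y_f)=dh(Y_f)=0$. This is not an artifact of the method: a direct computation gives $\mathcal{L}_{Y_f}\bigl(\omega|_{f^{-1}(0)}\bigr)=\theta(Y_f)\,\omega|_{f^{-1}(0)}$, so $\omega|_{f^{-1}(0)}$ is basic, and the literal equality $\pi^*\omega_N=\omega$ is achievable, exactly when $\theta(Y_f)=0$ --- which is the special case singled out in \ref{rem:redHopf0}, not the general hypothesis of the proposition. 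The analogy with \ref{thm:ExSymCn} also does not transfer: there the Lee form of $\omega_f$ is $df$ globally, its primitive along the foliation is $f$ itself, and the cancellation $e^{-f}e^{f}\omega_0=\omega_0$ is exact; here $h$ bears no relation to $f$, which vanishes identically on the level set. To be fair, this imprecision sits in the paper's own statement, which asserts $\pi^*\omega_N=\omega$ with no more justification than you give; the honest conclusion under the stated hypothesis is $\pi^*\omega_N=e^{h}\omega$, with $h$ as above.
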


\begin{remark}
	\label{rem:redHopf0}
	A special case of $f: S^1 \times S^{2n-1} \to \mathbb{R}$ satisfying the above criteria is one for which $\theta(Y_f) = 0$. This means
	\begin{equation}
	\label{eq:rem:redHopf0_1}
	(\Ker \ df)^\omega \subset \Ker \ \theta \iff (\Ker \ \theta)^\omega \subset \Ker \ df.
	\end{equation}
	But $(\Ker \ \theta)^\omega = \langle X_1 \rangle$, so, looking at the Hopf fibration $H$:
	
	\begin{center}
		\begin{tikzpicture}[start chain] {
			\node[on chain] {$0$};
			\node[on chain] {$S^1$};
			\node[on chain] {$S^{2n-1}$};
			\node[on chain, join={node[above] {$p$}}] {$\mathbb{CP}^{n-1}$};
			\node[on chain] {$0$}; }
		\end{tikzpicture}
	\end{center} 
	condition (\ref{eq:rem:redHopf0_1}) means that $f = g \circ (id, p)$ for some $g:S^1 \times \mathbb{CP}^{n-1} \to \mathbb{R}$ having $0$ a regular value. If $G = g^{-1}(0) \subset S^1 \times \mathbb{CP}^{n-1}$, then $f^{-1}(0)$ is the total space of the fibration $(S^1 \times H)_{|G}$.
\end{remark}

%
%\begin{corollary}
%TBD

%\begin{proof}
%Take $M \subset \mathbb{CP}^n$ a closed hypersurface. By \ref{rem:levset}, there is a function $\tilde{g}: \mathbb{CP}^n \to \mathbb{R}$ such that $M = \tilde{g}^{-1}(0)$. Let $\tilde{f} = \tilde{g} \circ p$ and $g = S^1 \times \mathbb{CP}^n \to \mathbb{R}$,
%\[
%g(e^{it}, x) = \tilde{g}(x).
%\]

%Take $f = g \circ p$ as in the previous remark. Then 
%\[
%f^{-1}(0) = S^1 \times \tilde{f}^{-1}(0).
%\]
%Furthermore, since $f$ comes from a function on $\mathbb{CP}^n$, the direction of the Hamiltonian of $f$ can be determined:
%\[
%TS^1 \cup \langle X_1 \rangle \subset \Ker df \iff \langle Y_f \rangle = (\Ker df)^\omega \subset 
%\]

%\end{proof}
	
%\end{corollary}

\begin{example}
Consider $g:S^1 \times \mathbb{C}\mathbb{P}^2 \to \mathbb{R}$,
\[
g(e^{it}, [z_1:z_2:z_3]) = \frac{\Re (z_1 \overline{z}_2)}{|z_1|^2 + |z_2|^2 + |z_3|^2},
\]
for which $0$ is a regular value. Let $f = g \circ p:S^1 \times S^3 \to \mathbb{R}$ as above. Then 
\[
f^{-1}(0) = S^1 \times \{ (z_1, z_2, z_3) \in S^5 \ | \ \Re (z_1 \overline{z}_2) = \langle z_1, z_2 \rangle = 0 \}
\]
and $\faktor{f^{-1}(0)}{Y_f}$ has an LCS structure, according to \ref{thm:redHopf}.

%Denote by $R, A, B \in \mathcal{X}(S^3)$ the canonical linearly independent vector fields on $S^3$, $R = X_1$ being the Reeb vector field. According to the previous remark, we have $Y_f \in \Ker \ \theta = TS^3$. Notice that $TS^1 \subset \Ker df \iff Y_f \in (TS^1)^\omega = TS^1 \oplus \langle A, B \rangle$. We obtain
%\[
%Y_f \in \langle A, B \rangle. 
%\]

\end{example}

\subsection{Reducing the cotangent bundle}

Another example of manifolds where LCS reduction is applicable are the cotangent bundles:

\begin{remark}(\cite{hr2})
\label{rem:cotangentLCS}
In addition to the canonic symplectic structure, the cotangent bundle of a given manifold $Q$ has various LCS structures: consider the tautological $1$-form $\eta$ on $T^*Q$, defined in any point $\alpha_x \in T^*_xQ$ by
\[
\eta_{\alpha_x} (v) = \alpha_x (\pi_* v).
\]
Then, for any $\theta$ a closed $1$-form on $Q$, $\omega_\theta = d_{\pi^* \theta} \eta$ is an LCS form on $T^*Q$ (which is not globally conformally symplectic unless $\theta$ is exact).
\end{remark}

We start by proving a few facts about extending Lie group actions on $Q$ to actions on its cotangent bundle:

\begin{proposition}
\label{prop:cotangentAct}
Let $Q$ be $n$-dimensional smooth manifold with a (left) Lie group action of $G$. Suppose $Q$ and $G$ are connected. Denote by $X_a \in \mathcal{X}(Q)$ the fundamental vector field corresponding to an $a \in \mathfrak{g}$.

Pick $\theta \in \Omega^1(Q)$ closed and suppose $\theta(X_a) = 0$, for all $a \in \mathfrak{g}$.

Then:
\begin{enumerate}[(i)]
	\item There exists a natural action of $G$ on $T^*Q$. Denote by $\tilde{X}_a \in \mathcal{X}(T^*Q)$ its fundamental vector fields.
	\item $\tilde{X}_a$ and $X_a$ are $\pi$-related: $\pi_*(\tilde{X}_a) = X_a$.
	\item The action is twisted Hamiltonian with respect to the LCS structure $(T^*Q,  \omega_\theta, \theta)$. Moreover, it preserves $\omega_\theta$.
	\item The momentum mapping of this actions is $\mu(\alpha_x)(a) = -\alpha_x (X_a)$.
	
\end{enumerate}

\begin{proof}
	
\begin{enumerate}[(i)]
	\item We define the (left) action of $G$ on $T^*Q$ by push-forward: 
	\[
	g \cdot \alpha_x = g_* (\alpha_x) \in T_{g \cdot x}^* Q, \ \forall \alpha_x \in T_x^*Q.
	\]
	Note that $\eta$ is $G$-invariant:
	\begin{equation*}
	\begin{split}
	(g^* \eta)_{\alpha_x}(v) &= \eta_{g \cdot \alpha_x} (g_* v) = (g \cdot \alpha_x)(\pi_* g_* v) = (g_* \alpha_x) ((\pi g)_* v) \\
	&= (g_* \alpha_x) ((g \pi)_* v) = \alpha_x (g^{-1}_* g_* \pi_* v) = \alpha_x (\pi_* v) \\
	&= \eta_{\alpha_x}(v), \ \forall v \in T_{\alpha_x} T^*Q.
	\end{split}
	\end{equation*}
	\item Indeed,
	\begin{equation*}
	\begin{split}
	\pi_*((\tilde{X}_a)_{\alpha_x}) &= \pi_*(\frac{d}{dt}_{|t = 0} (e^{ta} \cdot \alpha_x)) = \frac{d}{dt}_{|t = 0} (\pi (e^{ta} \cdot \alpha_x)) \\
	&= \frac{d}{dt}_{|t = 0} ( e^{ta} \cdot (\pi \alpha_x)) = \frac{d}{dt}_{|t = 0} ( e^{ta} \cdot x) = (X_a)_x.
	\end{split}
	\end{equation*}
	\item Denote $\tilde{\theta} = \pi^* \theta$. Then
	\[
	\mathcal{L}_{\tilde{X}_a} \tilde{\theta} = d (\tilde{\theta}(\tilde{X}_a)) +  i_{{\tilde{X}_a}}d\tilde{\theta} = d (\theta(X_a)) = 0.
	\] 
	Together with the fact that $\eta$ is also $G$-invariant, we obtain that $\omega_\theta = d_{\tilde{\theta}} \eta$ is $G$-invariant. Furthermore,
	\[
	0 = \mathcal{L}^{\tilde{\theta}}_{\tilde{X}_a} \eta = d_{\tilde{\theta}} (\eta(\tilde{X}_a)) + i_{\tilde{X}_a} d_{\tilde{\theta}} \eta = d_{\tilde{\theta}} (\eta(\tilde{X}_a)) + i_{\tilde{X}_a} \omega_\theta,
	\]
	so the action is twisted Hamiltonian with $\rho_a(\alpha_x) = -\eta_{\alpha_x} (\tilde{X}_a)$.
	\item It then follows by definition that $\mu(\alpha_x)(a) = -\alpha_x (X_a)$.
\end{enumerate}
\end{proof}
\end{proposition}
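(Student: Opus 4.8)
The plan is to realize the action as the \emph{cotangent lift} of the given $G$-action on $Q$ and to pivot the entire argument on the single fact that this lift preserves the tautological $1$-form $\eta$ of \ref{rem:cotangentLCS}. Writing $L_g$ for the diffeomorphism $x \mapsto g\cdot x$, I would define $g\cdot\alpha_x = \alpha_x\circ (L_{g^{-1}})_* \in T^*_{g\cdot x}Q$; the functoriality identity $(L_{h^{-1}})_*\circ(L_{g^{-1}})_* = (L_{(gh)^{-1}})_*$ shows this is a genuine left action, settling (i). The first substantive step is the $G$-invariance of $\eta$, which I would check by a chain-rule computation resting on the equivariance $\pi\circ L_g = L_g\circ\pi$: one gets $(g^*\eta)_{\alpha_x}(v) = \eta_{g\cdot\alpha_x}(g_* v) = (g\cdot\alpha_x)(\pi_* g_* v) = \alpha_x(\pi_* v) = \eta_{\alpha_x}(v)$.

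For (ii), the equivariance of $\pi$ is built into the construction, so differentiating the curve $t\mapsto e^{ta}\cdot\alpha_x$ at $t=0$ and commuting $\pi_*$ past the derivative yields $\pi_*\tilde X_a = X_a$.

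The heart of the matter is (iii), and the plan is to exploit the twisted Cartan formula \ref{eq:cartan}. Setting $\tilde\theta = \pi^*\theta$ for the Lee form of $\omega_\theta = d_{\tilde\theta}\eta$, I would first show $\tilde\theta$ is infinitesimally invariant: the ordinary Cartan formula gives $\mathcal{L}_{\tilde X_a}\tilde\theta = d(\tilde\theta(\tilde X_a)) + i_{\tilde X_a}\pi^* d\theta$, and both terms vanish, the second because $\theta$ is closed and the first because $\tilde\theta(\tilde X_a) = \theta(\pi_*\tilde X_a) = \theta(X_a) = 0$ by hypothesis and (ii). \textbf{This is exactly where the assumption $\theta(X_a)=0$ enters.} Together with the invariance of $\eta$ from (i), this forces $\omega_\theta$ to be invariant. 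Applying \ref{eq:cartan} to $\eta$ and using $\mathcal{L}^{\tilde\theta}_{\tilde X_a}\eta = \mathcal{L}_{\tilde X_a}\eta - \tilde\theta(\tilde X_a)\eta = 0$, I obtain
\[
0 = d_{\tilde\theta}(\eta(\tilde X_a)) + i_{\tilde X_a} d_{\tilde\theta}\eta = d_{\tilde\theta}(\eta(\tilde X_a)) + i_{\tilde X_a}\omega_\theta,
\]
so $i_{\tilde X_a}\omega_\theta = d_{\tilde\theta}(-\eta(\tilde X_a))$ is $d_{\tilde\theta}$-exact: the action is twisted Hamiltonian with potential $\rho_a = -\eta(\tilde X_a)$.

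Finally, (iv) is immediate from the definition of the momentum mapping and the $\pi$-relatedness of (ii): $\mu(\alpha_x)(a) = \rho_a(\alpha_x) = -\eta_{\alpha_x}(\tilde X_a) = -\alpha_x(\pi_*\tilde X_a) = -\alpha_x(X_a)$. I expect the only delicate point to be the invariance of the tautological form under the cotangent lift—a standard but index-careful computation—together with the recognition that $d\theta = 0$ and $\theta(X_a)=0$ are precisely the two inputs that make $\tilde\theta$ (hence $\omega_\theta$) invariant and the twisted Lie derivative of $\eta$ vanish; everything else is formal bookkeeping with \ref{eq:cartan}.
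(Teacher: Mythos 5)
Your proposal is correct and follows essentially the same route as the paper: your cotangent lift $\alpha_x\circ(L_{g^{-1}})_*$ is exactly the paper's push-forward action $g_*(\alpha_x)$, and both arguments rest on the $G$-invariance of the tautological form $\eta$, the vanishing of $\mathcal{L}_{\tilde X_a}\tilde\theta$ via $d\theta=0$ and $\theta(X_a)=0$, and the twisted Cartan formula \eqref{eq:cartan} yielding $\rho_a=-\eta(\tilde X_a)$. The only differences are cosmetic: you spell out the left-action axiom and the use of connectedness and $\pi$-relatedness where the paper leaves them implicit.
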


This leads directly to the following consequence of the LCS reduction theorem \ref{th:LCSred}:

\begin{proposition}
\label{thm:redCotan}
Let $Q$ be a connected manifold and $G$ a connected Lie group acting on it. Let $\theta \in \Omega^1(Q)$ be closed and suppose $\theta(X_a) = 0$, for all $a \in \mathfrak{g}$.

Let $\mu$ be the momentum mapping of the corresponding twisted Hamiltonian action of $G$ on $T^*Q$ and $\xi \in \mathfrak{g}^*$ a regular value. Let $\mathcal{F} = T\mu^{-1}(\xi) \cap (T\mu^{-1}(\xi))^\omega$.

If $N_\xi := \faktor{\mu^{-1}(\xi)}{\mathcal{F}}$ is a smooth manifold and $\pi : \mu^{-1}(\xi) \to N_\xi$ is a submersion, then $N_\xi$ has an LCS structure such that the LCS form $\omega_\xi$ satisfies
\[
\pi^* \omega_\xi = {\omega_\theta}_{|\mu^{-1}(\xi)}.
\]
\end{proposition}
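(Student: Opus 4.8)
The plan is to recognize this statement as an immediate corollary of the main LCS reduction theorem \ref{th:LCSred}, since all of the structural input has already been assembled in Proposition \ref{prop:cotangentAct}. First I would verify that $(T^*Q, \omega_\theta, \theta)$ fits the framework of Section \ref{sectionLcsRed}: the total space $T^*Q$ is connected because $Q$ is (a vector bundle over a connected base is connected), and $\omega_\theta = d_{\pi^*\theta}\eta$ is an LCS form with Lee form $\pi^*\theta$ by Remark \ref{rem:cotangentLCS}. The group $G$ is connected by hypothesis, and the standing assumption $\theta(X_a)=0$ for all $a\in\mathfrak{g}$ is exactly the condition under which Proposition \ref{prop:cotangentAct} produces a natural $G$-action on $T^*Q$ which is twisted Hamiltonian, with explicit momentum mapping $\mu(\alpha_x)(a) = -\alpha_x(X_a)$ coming from $\rho_a = -\eta(\tilde{X}_a)$.

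The decisive point is part (iii) of Proposition \ref{prop:cotangentAct}, which asserts that this action not only preserves the conformal class but actually \emph{preserves the LCS form} $\omega_\theta$ itself. This places us in the first of the two alternative hypotheses of Theorem \ref{th:LCSred}, so none of the extra conditions attached to the compact-group case (vanishing of the bilinear forms, exactness of $\theta$ along $\mathcal{F}$) need to be checked. Since $\xi$ is a regular value and we assume $N_\xi = \mu^{-1}(\xi)/\mathcal{F}$ is a smooth manifold with $\pi$ a submersion, every hypothesis of Theorem \ref{th:LCSred} is satisfied. Applying it yields an LCS structure on $N_\xi$ whose form obeys $\pi^*\omega_\xi = e^f\,\omega_\theta|_{\mu^{-1}(\xi)}$ for some $f\in C^\infty(\mu^{-1}(\xi))$; and because the action preserves $\omega_\theta$, the ``moreover'' clause of the theorem permits the choice $f=0$, giving the asserted identity $\pi^*\omega_\xi = \omega_\theta|_{\mu^{-1}(\xi)}$.

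I expect no real obstacle: the proposition is purely an application, and the only substantive task is to confirm the checklist of hypotheses of Theorem \ref{th:LCSred}, with the single delicate item being that we land in the invariant (first) case — precisely what forces $f=0$ and hence the clean pullback formula. One background subtlety worth recording is that the momentum-mapping formalism of Section \ref{sectionLcsRed} presupposes $\theta$ nonexact in order to make $\rho_a$ unique via $H^0_\theta = 0$; here, however, the momentum mapping is produced \emph{explicitly} in Proposition \ref{prop:cotangentAct} as $\rho_a=-\eta(\tilde{X}_a)$, which satisfies $d_{\tilde\theta}\rho_a = i_{\tilde{X}_a}\omega_\theta$ directly, so no uniqueness question interferes with the reduction even when $\theta$ happens to be exact.
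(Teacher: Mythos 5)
Your proposal is correct and matches the paper exactly: the paper states this proposition as a direct consequence of Theorem \ref{th:LCSred}, with Proposition \ref{prop:cotangentAct} supplying the twisted Hamiltonian action that moreover preserves $\omega_\theta$, so the first alternative hypothesis applies and the ``moreover'' clause gives $f=0$ and the clean pullback formula. Your closing remark about the uniqueness of $\rho_a$ when $\theta$ is exact is a sensible extra observation, but it does not alter the argument.
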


\begin{example}
Let $Q = S^1 \times S^3$ with the induced Riemannian metric $g_0$ and the standard action of $S^1$ on the second factor. Since $S^3$ is parallelizable, $T^*(S^1 \times S^3) \simeq S^1 \times S^3 \times \mathbb{R}^4$; denote by $R, A, B \in \mathcal{X}(S^3)$ the canonical linearly independent vector fields on $S^3$ arising from the identification of $\mathbb{R}^4$ with the quaternion field $\mathbb{H}$ (\ie $R_q = i \cdot q, A_q = j \cdot q$ and $B_q = k \cdot q$ for any $q \in \mathbb{H}$), $R = X_1$ being the Reeb vector field, and by $V$ the unitary vector field on $S^1$.

\begin{remark}
\label{rem:calculeQuaternion}
\begin{enumerate}
	\item For any quaternion $q \in S^3$, the standard $S^1$-action is simply the multiplication in $\mathbb{H}$.
	\item The vector fields $R$ and $V$ are $S^1$-invariant. 
	\item For any $t$ and any $q \in S^3$, we have 
	\begin{equation*}
	\begin{split}
	(e^{it})_* A_q &= \cos 2t \cdot A_{e^{it} \cdot q} + \sin 2t \cdot B_{e^{it} \cdot q} \\
	(e^{it})_* B_q &= - \sin 2t \cdot A_{e^{it} \cdot q} + \cos 2t \cdot B_{e^{it} \cdot q}. 
	\end{split}
	\end{equation*}
	
\end{enumerate}

\begin{proof}
\begin{enumerate}
	\item \label{magnus} Indeed, we have the identification 
	\[
	\mathbb{C}^2 \ni (z_1, z_2) \leftrightarrow z_1 + j \overline{z_2} \in \mathbb{H}.
	\]
	Then 
	\begin{equation*}
	\begin{split}
	e^{it} \cdot q &= e^{it} \cdot (z_1 + j \overline{z_2}) = e^{it} \cdot (z_1, z_2) = (e^{it} z_1, e^{it} z_2) \\ &= e^{it} z_1 + j e^{-it} \overline{z_2} = e^{it} (z_1 + j \overline{z_2}).
	\end{split}
	\end{equation*}
	\item This is obvious, as $S^1$ acts only on $S^3$ and $R$ is a fundamental vector field of the action.
	\item Using (\ref{magnus}), we calculate the push-forward of $A_q$ by the action of $e^{it}$ as a free vector in $\mathbb{R}^4$, since the multiplication by $e^{it}$ is linear:
	\begin{equation*}
	\begin{split}
	e^{it} \cdot A_q &= e^{it} \cdot jq = e^{it} je^{-it} (e^{it}q) = e^{2it} j (e^{it}q) \\
	&= \cos2t \cdot j (e^{it}q) + i \sin 2t \cdot j (e^{it}q) \\
	&= \cos 2t \cdot A_{e^{it} \cdot q} + \sin 2t \cdot B_{e^{it} \cdot q}.
	\end{split}
	\end{equation*}
	
	Similarly,
	\begin{equation*}
	\begin{split}
	e^{it} \cdot B_q &= e^{2it} k (e^{it}q) = \cos2t \cdot k (e^{it}q) + i \sin 2t \cdot k (e^{it}q) \\
	&= \cos 2t \cdot B_{e^{it} \cdot q} - \sin 2t \cdot A_{e^{it} \cdot q}.
	\end{split}
	\end{equation*}
	
\end{enumerate}
\end{proof}

\end{remark}

Consider the fibration

\begin{center}
\begin{tikzpicture}[start chain] {
	\node[on chain] {$0$};
	\node[on chain] {$S^1 \times S^1$};
	\node[on chain] {$S^1 \times S^3$};
	\node[on chain, join={node[above] {$p$}}] {$S^1 \times S^2$};
	\node[on chain] {$0$}; }
\end{tikzpicture}
\end{center}

Let $\theta \in \Omega^1(S^1 \times S^3)$ be closed such that $\theta(R) = 0$. This means that $\theta = p^* \eta$, for some closed form $\eta$ on $S^1 \times S^2$.

According to \ref{rem:calculeQuaternion} and since the metric $g_0$ is $S^1$-invariant, the corresponding action on $T^*(S^1 \times S^3) \simeq S^1 \times S^3 \times \langle V^\sharp, R^\sharp, A^\sharp, B^\sharp \rangle$ is 

\begin{equation}
\label{eq:actpecotang}
(z, x, (v, r, a, b)) \xrightarrow{e^{it} \cdot } (z, e^{it} \cdot x, (v, r, e^{-2it}\cdot(a, b))).
\end{equation}

\ie it induces a rotation of the opposite angle on the $\langle A^\sharp, B^\sharp \rangle$ plane.

According to \ref{prop:cotangentAct}, the momentum mapping of the $S^1$ action on $T^*(S^1 \times S^3)$ is 
\begin{equation*}
\begin{split}
\mu &: T^*(S^1 \times S^3) \to \mathbb{R}, \\
\mu(v V_x^\sharp + r R_x^\sharp + a A_x^\sharp + b B_x^\sharp) &= - (v V_x^\sharp + R_x^\sharp + a A_x^\sharp + b B_x^\sharp)(R_x) = -r
\end{split}
\end{equation*}
\ie the projection (with reversed sign) on the first factor of $\mathbb{R}^4$. In particular, every value is regular.

Consider the quotient of the $0$-level set by the group action described in \eqref{eq:actpecotang}. We have
\[
\mu^{-1}(0) = S^1 \times S^3 \times \langle V^\sharp \rangle \times \langle A^\sharp, B^\sharp \rangle, 
\]
so
\[ \faktor{\mu^{-1}(0)}{S^1} = S^1 \times \mathbb{R} \times \faktor{(S^3 \times \langle A^\sharp, B^\sharp \rangle)}{S^1} 
\]
as the action is trivial on $S^1$ and its cotangent space. 

We claim $\faktor{(S^3 \times \langle A^\sharp, B^\sharp \rangle)}{S^1} \simeq T^*(\mathbb{CP}^1)$. For this, it is easier to look at the general form of the action on the cotangent bundle (as defined in \ref{prop:cotangentAct}) rather than the concrete formula in \eqref{eq:actpecotang}. Indeed, denote by $\pi: S^3 \to \mathbb{CP}^1$ the Hopf projection and consider the mapping
\begin{equation*}
\begin{split}
\varphi: \faktor{(S^3 \times \langle A^\sharp, B^\sharp \rangle)}{S^1} &\to T^*(\mathbb{CP}^1), \\ \varphi([q, \alpha_q])(w_{[q]}) &= (\alpha_q, v_q), \text{ where } v_q \in \langle A^\sharp, B^\sharp \rangle \text{ with } \pi_* (v_q) = w_{[q]}
\end{split}
\end{equation*}
(here we denoted by $( \ , \ )$ the duality mapping between $T^*_q (\mathbb{CP}^1)$ and $T_q (\mathbb{CP}^1)$).

One can see that $\varphi$ is correctly defined, as
\[
\varphi([e^{it} \cdot q, (e^{-it})^* \alpha_q])(w_{[q]}) = ((e^{-it})^* \alpha_q, (e^{it})_* (v_q)) = (\alpha_q, v_q),
\]
and it is a diffeomorphism.	

It follows from \ref{thm:redCotan} that, for any $\theta$ as above, the reduction at regular value $0$ is $T^* (S^1 \times \mathbb{CP}^1)$ (see \ref{rem:redzero}) with LCS form $\omega_\eta$ and Lee form $\eta$, as in \ref{rem:cotangentLCS}.

Now take $\theta \in \Omega^1(S^1 \times S^3)$ to be the angular form coming from $S^1$. This satisfies $\theta(R) = 0$. One can check (in local coordinates) that $\theta^{\omega_\theta} = V \in TS^1$ the unitary vector field tangent to $S^1$. We can then identify the reduced space up to diffeomorphism using \ref{prop:actiuneGrup}:
\begin{equation*}
\begin{split}
N_r &= \faktor{(S^1 \times \langle V^\sharp \rangle \times S^3 \times \{-r\} \times \langle A^\sharp, B^\sharp \rangle)}{\langle \tilde{X}_1 + r V \rangle} \\
&\simeq S^1 \times \faktor{(\langle V^\sharp \rangle \times S^3 \times \{-r\} \times \langle A^\sharp, B^\sharp \rangle)}{\mathbb{R}},
\end{split}
\end{equation*}
where the action of $\mathbb{R}$ on $\langle V^\sharp \rangle \times (S^3 \times \{-r\} \times \langle A^\sharp, B^\sharp \rangle)$ is defined by 
\[
s \cdot (t, x) = (t + rs, e^{is} \cdot x),
\]
for any $x \in S^3 \times \langle A^\sharp, B^\sharp \rangle$. But then $N_r$ can be understood from a more general remark:

\begin{remark}
\label{rem:RxMfactor}
Let $M$ be a manifold with an $S^1$ action. Define an action of $\mathbb{R}$ on $\mathbb{R} \times M$ by
\[
s \cdot (t, x) = (t + rs, e^{is} \cdot x), \ \forall x \in M,
\]
for some $r \in \mathbb{R}$. Then
\[
\faktor{(\mathbb{R} \times M)}{\mathbb{R}} \simeq M.
\]

\begin{proof}
First, notice that via a normalization, we can assume $r = 1$. Define the mapping
\[
\faktor{(\mathbb{R} \times M)}{\mathbb{R}} \ni \widehat{(t, x)} \to e^{-it} \cdot x \in M.
\]
This can be seen to be a diffeomorphism.
\end{proof}
\end{remark}

We then have, according to \ref{thm:redCotan}, that, for any $r \in \mathbb{R}$, the manifold 
\[
N_r = S^1 \times (S^3 \times \{-r\} \times \langle A^\sharp, B^\sharp \rangle)
\]
has an LCS structure.

\end{example}

\bigskip

\textbf{Acknowledgments.} I would like to thank my advisor, Professor Liviu Ornea, for his guidance and suggestions throughout.

\end{document}